\let\ams@starttoc\@starttoc
\let\@starttoc\ams@starttoc
\patchcmd{\@starttoc}{\makeatletter}{\makeatletter\parskip\z@}{}{}
\newtheorem{thm}{Theorem}[section]
\newtheorem{cor}[thm]{Corollary}
\newtheorem{lem}[thm]{Lemma}
\theoremstyle{remark}
\newtheorem{rem}[thm]{Remark}
\theoremstyle{definition}
\newtheorem{examp}[thm]{Example}
\newcommand{\N}{\mathbb{N}}
\newcommand{\R}{\mathbb{R}}
\renewcommand{\r}{\mathbf{r}}
\newcommand{\cT}{T}
\renewcommand{\Im}{\mathrm{Im}}
\DeclareMathOperator{\Tr}{Tr}
\DeclareMathOperator{\chg}{chg}
\begin{document}
\title{On the unconditional uniqueness of solutions to the infinite radial
Chern-Simons-Schr\"{o}dinger hierarchy}
\author{Xuwen Chen}
\address{Brown University}
\email{chenxuwen@math.brown.edu}
\urladdr{http://www.math.brown.edu/\symbol{126}chenxuwen/}
\author{Paul Smith}
\address{University of California, Berkeley}
\email{smith@math.berkeley.edu}
\urladdr{http://math.berkeley.edu/\symbol{126}smith/}
\thanks{The second author was supported by NSF grant DMS-1103877.}

\begin{abstract}
In this article we establish the unconditional uniqueness of solutions to an
Infinite Radial Chern-Simons-Schr\"{o}dinger (IRCSS) hierarchy in two
spatial dimensions. The IRCSS hierarchy is a system of infinitely many
coupled PDEs that describes the limiting Chern-Simons-Schr\"{o}dinger
dynamics of infinitely many interacting anyons. The anyons are two
dimensional objects which interact through a self-generated field. Due to
the interactions with the self-generated field, the IRCSS hierarchy is a
system of \emph{nonlinear} PDEs, which distinguishes it from the \emph{linear%
} infinite hierarchies studied previously. Factorized solutions of the IRCSS
hierarchy are determined by solutions of the Chern-Simons-Schr\"{o}dinger
system. Our result therefore implies the unconditional uniqueness of
solutions to the radial Chern-Simons-Schr\"{o}dinger system as well.
\end{abstract}

\maketitle
\tableofcontents

%%%%%%%%%%%%%%%%%%%%%%%%%%%%%%%%%%%%%%%%%%%%%%%%%%%%%%%%%%%%%%%%%%%%%%%%%%%%%%%%%%%%%%%%%%%%%%%%%%%%%%%%%%%%%%%%%%%%%%%%%%%%%%%%%%%%%%%%%%%%%%%%%%%%%%%%%%%%%%%%%%%%%%%%%%%%%%%%%%%%%%%%%%%%%%%%%%%%%%%%%%%%%%%%%%%%%%%%%%%%%%%%%%%%%%%%%%%%%%%%

\section{Introduction}

\subsection{The Chern-Simons-Schr\"odinger system}

The Chern-Simons-Schr\"odinger system is given by 
\begin{equation}  \label{CSS-LST}
\begin{cases}
D_t \phi & = i \sum_{\ell = 1}^2 D_\ell D_\ell \phi + i g \lvert \phi
\rvert^2 \phi \\ 
\partial_t A_1 - \partial_1 A_0 & = - \Im(\bar{\phi} D_2 \phi) \\ 
\partial_t A_2 - \partial_2 A_0 & = \Im(\bar{\phi} D_1 \phi) \\ 
\partial_1 A_2 - \partial_2 A_1 & = -\frac{1}{2} \lvert \phi \rvert^2%
\end{cases}%
\end{equation}
where the associated covariant differentiation operators are defined in
terms of the potential $A$ by 
\begin{equation}  \label{D alpha}
D_\alpha := \partial_\alpha+ i A_\alpha, \quad \quad \alpha \in \{0, 1, 2\}
\end{equation}
and where we adopt the convention that $\partial_0 := \partial_t$ and $D_t
:= D_0 $. The wavefunction $\phi$ is complex-valued, the potential $A$ a
real-valued 1-form, and the pair $(A, \phi)$ is defined on $I \times \mathbb{%
R}^2$ for some time interval $I$. The Lagrangian action for this system is 
\begin{equation}  \label{Lagrangian}
L(A,\phi) = \frac12 \int_{\mathbb{R}^{2+1}} \left[ \Im (\bar \phi D_t \phi)
+ |D_x \phi|^2 -\frac{g}2 |\phi|^4 \right] dx dt + \frac12 \int_{\mathbb{R}%
^{2+1}} A \wedge dA
\end{equation}
where here $|D_x \phi|^2 := |D_1 \phi|^2 + |D_2 \phi|^2$. Although the
potential $A$ appears explicitly in the Lagrangian, it is easy to see that
locally $L(A,\phi)$ only depends upon the field $F = dA$. Precisely, the
Lagrangian is invariant with respect to the gauge transformations 
\begin{equation}  \label{gauge-freedom}
\phi \mapsto e^{-i \theta} \phi \quad \quad A \mapsto A + d \theta
\end{equation}
for compactly supported real-valued functions $\theta(t, x)$. The
Chern-Simons-Schr\"odinger system \eqref{CSS-LST}, obtained as the
Euler-Lagrange equations of \eqref{Lagrangian}, inherits this gauge freedom.

The system \eqref{CSS-LST} is a basic model of Chern-Simons dynamics \cite%
{JaPi91, EzHoIw91,EzHoIw91b, JaPi91b}. 
%, which exists only in two spatial dimensions. 
It plays a role in describing certain physical phenomena, such as the
fractional quantum Hall effect, high-temperature superconductivity, and
Aharonov-Bohm scattering, and also provides an example of a
Galilean-invariant planar gauge field theory \cite{JaTe81, DeJaTe82,
JaPiWe90, MaPaSo91, Wi90}.

One interpretation of \eqref{CSS-LST} is as a mean-field equation.
Informally, one may consider \eqref{CSS-LST} as describing the behavior of a
large number of anyons, interacting with each other directly and through a
self-generated field, in the case where the $N$-body wave function
factorizes. There are a number of challenges one encounters in trying to
formalize and prove this statement, and this paper addresses some of them.
We will postpone further discussion of many-body dynamics to the next
subsection, and instead point out that, because the main evolution equation
in \eqref{CSS-LST} includes a cubic nonlinearity, one might hope to prove
for \eqref{CSS-LST} what one can prove for the cubic NLS. It is important to
note, however, that \eqref{CSS-LST} has many nonlinear terms, some nonlocal
and some involving the derivative of the wave-function. These terms appear
because of the geometric structure that arises from modeling the
interactions with the self-generated field. Due to the complexity of the
nonlinearity in \eqref{CSS-LST} and the gauge freedom \eqref{gauge-freedom},
the system \eqref{CSS-LST} is significantly more challenging to analyze than
the cubic NLS. This difference is seen even at the level of the
wellposedness theory, to which we now turn.

The system \eqref{CSS-LST} is Galilean-invariant and has conserved \emph{%
charge} 
\begin{equation}
\chg(\phi ):=\int_{\mathbb{R}^{2}}\lvert \phi \rvert ^{2}dx  \label{charge}
\end{equation}%
and \emph{energy} 
\begin{equation}
E(\phi ):=\frac{1}{2}\int_{\mathbb{R}^{2}}\left[ \lvert D_{x}\phi \rvert
^{2}-\frac{g}{2}\lvert \phi \rvert ^{4}\right] dx  \label{energy}
\end{equation}%
Moreover, for each $\lambda >0$, there is the scaling symmetry 
\begin{equation*}
\begin{split}
& \phi (t,x)\mapsto \lambda \phi (\lambda ^{2}t,\lambda x);\quad
A_{0}(t,x)\mapsto \lambda ^{2}A_{0}(\lambda ^{2}t,\lambda x);\quad
A_{j}(t,x)\mapsto \lambda A_{j}(\lambda ^{2}t,\lambda x),\quad j\in \{1,2\};
\\
& \phi _{0}(x)\mapsto \lambda \phi _{0}(\lambda x),
\end{split}%
\end{equation*}%
which preserves both the system and the charge of the initial data $\phi _{0}
$. Therefore, from the point of view of wellposedess theory, the system %
\eqref{CSS-LST} is $L^{2}$-critical. We remark that system \eqref{CSS-LST}
is defocusing when $g<1$ and focusing when $g\geq 1$. The
defocusing/focusing dichotomy is most readily seen by rewriting the energy %
\eqref{energy} using the so-called Bogomol'nyi identity. After using this
identity, one may also see the dichotomy manifested in the virial and
Morawetz identities. For more details, see \cite[\S \S 4, 5]{LiSm13}. Note
also that the sign convention for $g$ that we adopt, which is the one used
in the Chern-Simons literature, is opposite to the usual one adopted for the
cubic NLS. A more significant difference between Chern-Simons systems and
the cubic NLS is that, unlike the case for the cubic NLS, the coupling
parameter $g$ cannot be rescaled to belong to a discrete set of canonical
values.

Nevertheless, \eqref{CSS-LST} is ill-posed so long as it retains the gauge
freedom \eqref{gauge-freedom}. This freedom is eliminated by imposing an
additional constraint equation. The most common gauge choice for studying %
\eqref{CSS-LST} is the Coulomb gauge, which is the constraint 
\begin{equation}  \label{Coulomb}
\partial_1 A_1 + \partial_2 A_2 = 0
\end{equation}
Coupling \eqref{Coulomb} with the field equations quickly leads to explicit
expressions for $A_\alpha$, $\alpha = 0, 1, 2$, in terms of $\phi$. These
expressions also happen to be nonlinear and nonlocal: 
\begin{equation} \label{CSSConCo}
A_0 = \Delta^{-1} \left[ \partial_1 \Im(\bar{\phi} D_2 \phi) - \partial_2
\Im(\bar{\phi} D_1 \phi) \right], \quad A_1 = \frac12 \Delta^{-1} \partial_2
|\phi|^2, \quad A_2 = -\frac12 \Delta^{-1} \partial_1 |\phi|^2
\end{equation}

Local wellposedness of \eqref{CSS-LST} with respect to the Coulomb gauge at
the Sobolev regularity of $H^2$ is established in \cite{BeBoSa95}. This is
improved to $H^1$ in \cite{Hu13}. Local wellposedness for data small in $H^s$%
, $s > 0$, is established in \cite{LiSmTa13} using the heat gauge, whose
defining condition is $\partial_1 A_1 + \partial_2 A_2 = A_0$. This result
relies upon various Strichartz-type spaces as well as more sophisticated $%
U^p, V^p$ spaces. We refer the reader to \cite[\S 2]{LiSmTa13} for a
comparison of the Coulomb and heat gauges.

In symmetry-reduced settings one may say more, and in particular \cite%
{LiSm13} establishes large data global wellposedness results at the critical
regularity for the equivariant Chern-Simons-Schr\"odinger system. To
introduce the equivariance (or \emph{vortex}) ansatz, it is convenient to
use polar coodinates. Define 
\begin{equation}  \label{A1A2->ArAtheta}
A_r = \frac{x_1}{|x|} A_1 + \frac{x_2}{|x|} A_2, \quad \quad A_\theta = -
x_2 A_1 + x_1 A_2,
\end{equation}
We can invert the transform by writing 
\begin{equation}  \label{ArAtheta->A1A2}
A_1 = A_r \cos \theta - \frac1r A_\theta \sin \theta, \quad \quad A_2 = A_r
\sin \theta + \frac1r A_\theta \cos \theta
\end{equation}
Note that these relations are analogous to 
\begin{equation*}
\partial_r = \frac{x_1}{|x|} \partial_1 + \frac{x_2}{|x|} \partial_2, \quad
\quad \partial_\theta = - x_2 \partial_1 + x_1 \partial_2
\end{equation*}
and 
\begin{equation*}
\partial_1 = (\cos \theta) \partial_r - \frac1r (\sin \theta)
\partial_\theta, \quad \quad \partial_2 = (\sin \theta) \partial_r + \frac1r
(\cos \theta) \partial_\theta
\end{equation*}
The equivariant ansatz, then, is 
\begin{equation}  \label{vortex}
\phi(t, x) = e^{i m \theta} u(t, r), \quad A_1(t, x) = -\frac{x_2}{r} v(t,
r), \quad A_2(t, x) = \frac{x_1}{r} v(t, r), \quad A_0(t, x) = w(t, r)
\end{equation}
where we assume that $m$ is a nonnegative integer, $u$ is real-valued at
time zero, and $v$, $w$ are real-valued for all time. This ansatz implies
that $A_r = 0$ and that $A_\theta$ is a radial function. It also places us
in the Coulomb gauge, i.e., $\partial_1 A_1 + \partial_2 A_2 = 0$, or
equivalently, $\partial_r A_r + \frac1r A_r + \frac{1}{r^2} \partial_\theta
A_\theta = 0$. For some motivation for studying vortex solutions in
Chern-Simons theories, see \cite{PaKh86, VeSc86, VeSc86b, JaWe90, ChSp09,
ByHuSe12}.

Converting \eqref{CSS-LST} into polar coordinates and utilizing %
\eqref{vortex}, we obtain the equivariant Chern-Simons-Schr\"{o}dinger
system (see \cite[\S 1]{LiSm13} for full details):
\begin{equation}
\begin{cases}
(i\partial _{t}+\Delta )\phi  & =\frac{2m}{r^{2}}A_{\theta }\phi +A_{0}\phi +%
\frac{1}{r^{2}}A_{\theta }^{2}\phi -g|\phi |^{2}\phi  \\ 
\partial _{r}A_{0} & =\frac{1}{r}(m+A_{\theta })|\phi |^{2} \\ 
\partial _{t}A_{\theta } & =r\Im (\bar{\phi}\partial _{r}\phi ) \\ 
\partial _{r}A_{\theta } & =-\frac{1}{2}|\phi |^{2}r \\ 
A_{r} & =0%
\end{cases}
\label{equivariant}
\end{equation}%
Global wellposedness holds for equivariant $L^{2}$ data of arbitrary
(nonnegative) charge in the defocusing case $g<1$ and for $L^{2}$ data with
charge less than that of the ground state in the focusing case $g\geq 1$;
this is the main result of \cite{LiSm13}. 

In this paper, we are interested in the radial case ($m = 0$) of system %
\eqref{equivariant}, which is 
\begin{equation}
\begin{cases}
(i\partial _{t}+\Delta )\phi & =A_{0}\phi +\frac{1}{r^{2}}A_{\theta
}^{2}\phi -g|\phi |^{2}\phi \\ 
\partial _{r}A_{0} & =\frac{1}{r}A_{\theta }|\phi |^{2} \\ 
\partial _{t}A_{\theta } & =r\Im (\bar{\phi}\partial _{r}\phi ) \\ 
\partial _{r}A_{\theta } & =-\frac{1}{2}|\phi |^{2}r \\ 
A_{r} & =0%
\end{cases}
\label{eqn:radial}
\end{equation}

\subsection{The infinite Chern-Simons-Schr\"{o}dinger hierarchy}

The infinite Chern-Simons-Schr\"{o}dinger hierarchy is a sequence of trace
class nonnegative operator kernels that are symmetric, in the sense that 
\begin{equation*}
\gamma ^{(k)}(t,\mathbf{x}_{k},\mathbf{x}_{k}^{\prime })=\overline{\gamma
^{(k)}(t,\mathbf{x}_{k}^{\prime },\mathbf{x}_{k})},
\end{equation*}%
and 
\begin{equation}
\gamma ^{(k)}(t,x_{\sigma (1)},\cdots x_{\sigma (k)},x_{\sigma (1)}^{\prime
},\cdots x_{\sigma (k)}^{\prime })=\gamma ^{(k)}(t,x_{1},\cdots
,x_{k},x_{1}^{\prime },\cdots ,x_{k}^{\prime }),  \label{condition:symmetric}
\end{equation}%
for any permutation $\sigma $, and which satisfy the 2D infinite
Chern-Simons-Schr\"{o}dinger hierarchy of equations 
\begin{equation}
\partial _{t}\gamma ^{(k)}+\sum_{j=1}^{k}\left[ iA_{0}(t,x_{j}),\gamma ^{(k)}%
\right] =\sum_{j=1}^{k}\sum_{\ell =1}^{2}i\left[ D_{x_{j}^{(\ell
)}}D_{x_{j}^{(\ell )}},\gamma ^{(k)}\right] +ig\sum_{j=1}^{k}B_{j,k+1}\gamma
^{(k+1)},  \label{eqn:ICSS}
\end{equation}%
where $\R^2 \ni x_j = (x_j^{(1)}, x_j^{(2)})$ for each $j$,
as well as the corresponding field-current identities from \cite[(1.7a)--(1.7c)]{JaPi90}, i.e.,
\begin{equation}
\begin{cases}
F_{01} & =-P_{2}(t,x)-A_{2}(t,x)\rho (t,x) \\ 
F_{02} & =P_{1}(t,x)+A_{1}(t,x)\rho (t,x) \\ 
F_{12} & =-\frac{1}{2}\rho (t,x)%
\end{cases}
\label{eqn:fields}
\end{equation}%
where, as before, $F:=dA$. Here $g$ is the coupling constant, 
\begin{equation}
B_{j,k+1}\gamma ^{(k+1)}:=\Tr_{k+1}\left[ \delta (x_{j}-x_{k+1}),\gamma
^{(k+1)}\right] ,  \label{collision!}
\end{equation}%
the momentum $P(t,x)$ is given by 
\begin{equation*}
P(t,x):=\int e^{i(\xi -\xi ^{\prime })x}\frac{\xi +\xi ^{\prime }}{2}\hat{%
\gamma}^{(1)}(t,\xi ,\xi ^{\prime })d\xi d\xi ^{\prime },
\end{equation*}%
and $\rho (t,x)$ is a shorthand for 
\begin{equation}
\rho (t,x):=\gamma ^{(1)}(t,x,x).  \label{rhodef}
\end{equation}%
Each $x_j \in \mathbb{R}^2$, and $\mathbf{x}_k := (x_1, \dots, x_k) \in 
\mathbb{R}^{2k}$. Given a compactly supported $\theta (t,x)$, the kernels $%
\gamma ^{(k)}$ and potential $A$ transform under a change of gauge according
to 
\begin{equation*}
\gamma ^{(k)}\mapsto \gamma ^{(k)}\prod_{j=1}^{k}e^{-i\theta
(t,x_{j})}e^{i\theta (t,x_{j}^{\prime })},\quad A\mapsto A+d\theta
\end{equation*}%
The invariance of \eqref{eqn:ICSS} and \eqref{eqn:fields} under such
transformations can be checked straightforwardly.

For the purposes of our analysis it is more convenient to write %
\eqref{eqn:ICSS} as 
\begin{equation}  \label{eqn:ICSSrewrite}
\begin{split}
i\partial _{t}\gamma ^{(k)}+\sum_{j=1}^{k}\left[ \Delta _{x_{j}},\gamma
^{(k)}\right] =& \sum_{j=1}^{k}\sum_{\ell =1}^{2}\left[ -2iA_{x_{j}^{(\ell
)}}\partial _{x_{j}^{(\ell )}}-i\partial _{x_{j}^{(\ell )}}A_{x_{j}^{(\ell
)}}+A_{x_{j}^{(\ell )}}^{2},\gamma ^{(k)}\right] \\
&+ \sum_{j=1}^{k} \left[A_0(\cdot, x_j) ,\gamma ^{(k)}\right] - g
\sum_{j=1}^{k}B_{j,k+1} \gamma^{(k+1)}
\end{split}%
\end{equation}
The Coulomb gauge condition \eqref{Coulomb}, upon being
coupled to \eqref{eqn:fields}, leads to 
\begin{equation*}
A_0 = \Delta^{-1} \left[ \partial_1 (P_2 + A_2 \rho) - \partial_2 (P_1 + A_1
\rho) \right], \quad A_1 = \frac12 \Delta^{-1} \partial_2 \rho, \quad A_2 =
-\frac12 \Delta^{-1} \partial_1 \rho
\end{equation*}
This is analogous to how \eqref{CSSConCo} for
the Chern-Simons-Schr\"odinger system \eqref{CSS-LST} is obtained
by coupling to the field equations in \eqref{CSS-LST} the gauge condition \eqref{Coulomb}.
Because each $A_\alpha$ involves $\rho$, defined in \eqref{rhodef}, it is
clear that each term involving $\gamma^{(k)}$ in the right-hand side of %
\eqref{eqn:ICSSrewrite} is best thought of as a nonlinear term. This
nonlinear dependence persists under changes of gauge, though some gauges
lead to tamer nonlinearities than others.

We remark that, while the specific form the nonlinearity of %
\eqref{eqn:ICSSrewrite} takes indeed depends upon the gauge selection made,
the observables associated with the system do not depend upon the gauge
choice.

We note that the system \eqref{CSS-LST} generates a special solution to the
infinite hierarchy \eqref{eqn:ICSS}, \eqref{eqn:fields}. In particular, if $%
(A, \phi)$ solves \eqref{CSS-LST}, then $(A, \{ \gamma^{(k)} \})$ solves %
\eqref{eqn:ICSS}, \eqref{eqn:fields}, where each $\gamma^{(k)}$ is given by 
\begin{equation*}
\gamma ^{(k)}\left( t,\mathbf{x}_{k},\mathbf{x}_{k}^{\prime }\right)
=\prod_{j=1}^{k}\phi (t,x_{j})\bar{\phi}(t,x_{j}^{\prime }).
\end{equation*}

We start our analysis of many-body dynamics with the above infinite
hierarchy. Ideally, one would prefer instead to begin with a many-body
system with only finitely many quantum particles. Because the basic
particles in question are neither bosons nor fermions, there are
difficulties to overcome with such an approach. Concerning the difficulties
in dealing with microscopic statistics, one can refer to \cite{BCEP2}, for
instance. Fortunately, as remarked in \cite{BCEP2}, microscopic statistics
disappear as the particle number tends to infinity. Thus the infinite
hierarchy satisfies the symmetry condition \eqref{condition:symmetric}. We
finally remark that the fact that the field equations \eqref{eqn:fields}
depend merely on the 1-particle density $\gamma ^{(1)}$, as has been
observed formally in the physics literature \cite%
{DeJaTe82,JaPiWe90,JaPi91b,JaTe81,JaWe90}.

One motivation for pursuing an analysis of the infinite hierarchy even
without first specifying the finite hierarchy is that the known approaches
to rigorously deriving mean-field equations, e.g., the Boltzmann equation
and the cubic NLS, all require a uniqueness theorem for the corresponding
infinite hierarchy. Establishing uniqueness of the infinite hierarchy is,
moreover, a critical step. We therefore anticipate that our result in this
article will be the linchpin of any future rigorous derivation of the
Chern-Simons-Schr\"{o}dinger system.

As remarked before, the analysis of the Chern-Simons-Schr\"{o}dinger system
with general data is, at the moment, very delicate. The same remark applies
all the more to the associated infinite hierarchy, to which \eqref{CSS-LST}
is a special solution. Thus we consider the radial version of the infinite
Chern-Simons-Schr\"{o}dinger hierarchy in this paper. The nonradial
equivariant case $(m>0)$, though still much simpler than the general system,
is slightly more challenging than the radial case. Unfortunately, the
techniques we employ for studying the radial case do not immediately extend
to the nonradial equivariant case due to certain logarithmic divergences.

\subsubsection{The Infinite Radial Chern-Simons-Schr\"odinger hierarchy}

The Chern-Simons-Schr\"odinger system \eqref{CSS-LST} simplifies to
\eqref{eqn:radial} under the assumption of radiality. %(see \cite[\S 1]{LiSm13}).
Similarly, by assuming radiality, we reduce equations \eqref{eqn:ICSS} through \eqref{rhodef}
to the infinite radial Chern-Simons-Schr\"{o}dinger hierarchy 
\begin{equation}
i\partial _{t}\gamma ^{(k)}+\sum_{j=1}^{k}\left[ \Delta _{x_{j}},\gamma
^{(k)}\right] =\sum_{j=1}^{k}\left[ A_{0}(t,|x_{j}|)+\frac{1}{|x_{j}|^{2}}%
A_{\theta }^{2}(t,|x_{j}|),\gamma ^{(k)}\right] -g\sum_{j=1}^{k}B_{j,k+1}%
\gamma ^{(k+1)}  \label{IR}
\end{equation}%
and the field equations 
\begin{equation*}
F_{r\theta }(t,|x|)=-\frac{1}{2}|x|\rho (t,|x|)
\end{equation*}%
and 
\begin{equation*}
\begin{split}
F_{0\theta }(t,|x|)& =|x|P_{r}(t,|x|) \\
F_{0r}(t,|x|)& =-\frac{1}{|x|}A_{\theta }(t,|x|)\rho (t,|x|),
\end{split}%
\end{equation*}%
for $\gamma ^{(k)}=\gamma ^{(k)}(t,\mathbf{r}_{k},\mathbf{r}_{k}^{\prime })$. 
In particular, here we assume that 
\begin{equation*}
\gamma ^{(k)}=u(t,\mathbf{r}_{k},\mathbf{r}_{k}^{\prime }),\quad
A_{r}=0,\quad A_{\theta }=v(t,r),
\end{equation*}%
where $m$ is a nonnegative integer, $u$ is real-valued at time zero, and $v$
is real-valued for all time. This assumption enforces the Coulomb gauge.
Recall that $B_{j,k+1}$ is defined in \eqref{collision!} and $\rho $ is
given by \eqref{rhodef}. As before, $F:=dA$, though now we are adopting
polar coordinates for $A$. Though we could rewrite everything exclusively in
terms of polar coordinates, we choose instead to use both Cartesian and
polar coordinates.

Putting everything together, we see that we are studying solutions $%
\gamma^{(k)} = \gamma^{(k)}(t, \r_k)$ of 
\begin{equation}  \label{Ihierarchy}
\begin{cases}
i \partial_t \gamma^{(k)} + \sum_{j = 1}^k \left[ \Delta_{x_j}, \gamma^{(k)} %
\right] & = \sum_{j = 1}^k \left[ A_0(t, |x_j|) + \frac{1}{|x_j|^2}
A_\theta^2(t, |x_j|) , \gamma^{(k)} \right] - g \sum_{j = 1}^k B_{j, k+1}
\gamma^{(k+1)} \\ 
\partial_r A_0(t, |x|) & = \frac{1}{|x|} A_\theta \rho(t, |x|) \\ 
\partial_t A_\theta(t, |x|) & = |x| P_r(t, |x|) \\ 
\partial_r A_\theta(t, |x|) & = - \frac12 |x| \rho(t, |x|) \\ 
A_r & = 0%
\end{cases}%
\end{equation}
We interpret $\gamma^{(k)}$ as a complex-valued function on $\mathbb{R}_t
\times \mathbb{R}^{k}_+ \times \mathbb{R}^{k}_+$ subject to the symmetries 
\begin{equation*}
\gamma^{(k)}(t, \r_k, \r_k^\prime) = \overline{\gamma^{(k)}(t, \r_k^\prime,
\r_k)}
\end{equation*}
and 
\begin{equation}  \label{indistinguishability}
\gamma^{(k)}(t, r_{\sigma(1)}, \ldots, r_{\sigma(k)}, r_{\sigma(1)}^\prime,
\ldots, r_{\sigma(k)}^\prime) = \gamma^{(k)}(t, r_1, \ldots, r_k,
r_1^\prime, \ldots, r_k^\prime)
\end{equation}
Though each $r_j \in \mathbb{R}_+$, we associate to this space the measure $%
r dr$, as indeed we think of $r_j = |x_j|$ for $x_j \in \mathbb{R}^2$.

Note that we can eliminate $A_\theta, A_0$ in \eqref{Ihierarchy}. In
particular, we have 
\begin{equation}  \label{AthetaDef}
A_\theta(t, r) = - \frac12 \int_0^{r} \rho(t, s) s ds
\end{equation}
and 
\begin{equation}  \label{A0Def}
A_0(t, r) = \frac12 \int_{r}^\infty \rho(t, s) \int_0^s \rho(t, u) u du 
\frac{ds}{s}
\end{equation}
which reflect the natural boundary conditions for $A_\theta$, $A_0$ that we
adopt for \eqref{CSS-LST}. Therefore we may rewrite \eqref{Ihierarchy} as 
\begin{equation}  \label{Ih}
\begin{split}
i \partial_t \gamma^{(k)} + \sum_{j = 1}^k [\Delta_{x_j}, \gamma^{(k)}] =&
\sum_{j = 1}^k \left[ \frac12 \int_{r_j}^\infty \rho(t, s) \int_0^s \rho(t,
u) u du \frac{ds}{s} + \frac{1}{r_j^2} \left( - \frac12 \int_0^{r_j} \rho(t,
s) s ds \right)^2, \gamma^{(k)} \right] \\
& - g \sum_{j = 1}^k B_{j, k+1} \gamma^{(k+1)} \\
\gamma^{(k)}(0) =& \; \gamma_0^{(k)}, \quad k \in \mathbb{N}
\end{split}%
\end{equation}

\subsection{Main results}

Our main theorem says that any admissible mild solution of the radial
infinite CSS hierarchy is unconditionally unique in $L_{t\in \lbrack
0,T)}^{\infty }\mathfrak{H}_{\mathrm{rad}}^{\frac{2}{3}}$. To explain what
this means, for $s\in \mathbb{R}$, we define the space $\mathfrak{H}_{%
\mathrm{rad}}^{s}$ to be the collection of sequences $\{\gamma
^{(k)}\}_{k\in \mathbb{N}}$ of density matrices in $L_{\mathrm{sym}}^{2}(%
\mathbb{R}^{2k})$ such that $\gamma ^{(k)}=\gamma ^{(k)}(t,\mathbf{r}_{k},%
\mathbf{r}_{k}^{\prime })$ and 
\begin{equation*}
\Tr(|S^{(k,s)}\gamma ^{(k)}|)<M^{2k},\quad \text{for all }k\in \mathbb{N}%
\text{ and for some constant }M>0
\end{equation*}%
where 
\begin{equation*}
S^{(k,s)}:=\prod_{j=1}^{k}(1-\Delta _{x_{j}})^{\frac{s}{2}}(1-\Delta
_{x_{j}^{\prime }})^{\frac{s}{2}}
\end{equation*}%
Here $L_{\mathrm{sym}}^{2}$ denotes the space of $L^{2}$ functions
satisfying \eqref{condition:symmetric}. Let $U^{(k)}(t)$ denote the
propagator 
\begin{equation}
U^{(k)}(t):=e^{it\Delta _{\mathbf{x}_{k}}}e^{-it\Delta _{\mathbf{x}%
_{k}^{\prime }}}  \label{bigUprop}
\end{equation}

A \emph{mild solution} of \eqref{Ih} in the space $L_{[0,T]}^{\infty }%
\mathfrak{H}_{\mathrm{rad}}^{s}$ is a sequence of marginal density matrices $%
\Gamma =(\gamma ^{(k)}(t))_{k\in \mathbb{N}}$ solving 
\begin{equation*}
\begin{split}
\gamma ^{(k)}(t)=& \;U^{(k)}(t)\gamma
^{(k)}(0)-i\int_{0}^{t}U^{(k)}(t-s)\times \\
& \left( \sum_{j=1}^{k}\left[ \frac{1}{2}\int_{r_{j}}^{\infty }\rho
(t,v)\int_{0}^{v}\rho (t,u)udu\frac{dv}{v}+\frac{1}{r_{j}^{2}}\left( -\frac{1%
}{2}\int_{0}^{r_{j}}\rho (t,v)vdv\right) ^{2},\gamma ^{(k)}\right] \right. \\
& \left. \quad -g\sum_{j=1}^{k}B_{j,k+1}\gamma ^{(k+1)}\right) ds
\end{split}%
\end{equation*}%
and satisfying 
\begin{equation*}
\sup_{t\in \lbrack 0,T)}\Tr(|S^{(k,s)}\gamma ^{(k)}(t)|)<M^{2k} 
\end{equation*}
for a finite constant $M$ independent of $k$. Note that if we are given
factorized initial data 
\begin{equation*}
\gamma _{0}^{(k)}(\r_k, \r_{k}^{\prime})=\prod_{j=1}^{k}\phi _{0}(r_{j})%
\overline{\phi _{0}(r_{j}^{\prime })},
\end{equation*}%
then the condition that $(\gamma ^{(k)}(0))\in \mathfrak{H}_{\mathrm{rad}%
}^{s}$ is equivalent to 
\begin{equation*}
\Tr(|S^{(k,s)}\gamma ^{(k)}(0)|)=\Vert \phi _{0}\Vert
_{H^{s}}^{2k}<M^{2k},\quad k\in \mathbb{N},
\end{equation*}%
which is to say that $\Vert \phi _{0}\Vert _{H^{s}}<M$ for some $M<\infty $.
Then a solution to the IRCSS hierarchy in $L_{t\in \lbrack 0,T)}^{\infty }%
\mathfrak{H}_{\mathrm{rad}}^{s}$ is given by the sequence of factorized
density matrices 
\begin{equation*}
\gamma ^{(k)}(t, \r_k, \r_k^\prime)=\prod_{j=1}^{k}\phi _{t}(r_{j})\overline{%
\phi _{t}(r_{j}^{\prime })}
\end{equation*}%
provided the corresponding 1-particle wave function $\phi _{t}$ satisfies
the radial Chern-Simons-Schr\"{o}dinger system \eqref{eqn:radial}.

Admissibility we take to mean that $\Tr \gamma^{(k)} = 1$ for all $k \in \N$ and
\begin{equation}
\gamma^{(k)}=\Tr_{k+1}(\gamma ^{(k+1)}),\quad k\in \mathbb{N}.
\label{admissible}
\end{equation}%
This is required in our application of the quantum de Finetti theorem. As
there are weak analogues of the quantum de Finetti theorem applicable to
limiting hierarchies, we expect our techniques to apply to the problem of
rigorously deriving the radial CSS from large, finite systems.

\begin{thm}[Unconditional uniqueness for the infinite hierarchy]
\label{thm:main}There is at most one $L_{t\in \lbrack 0,T)}^{\infty }%
\mathfrak{H}_{\mathrm{rad}}^{\frac23}$ admissible solution to the infinite
radial Chern-Simons-Schr\"{o}dinger hierarchy \eqref{Ihierarchy}.
\end{thm}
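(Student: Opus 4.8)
\section*{Proof proposal}

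The plan is to follow the by-now-standard strategy for unconditional uniqueness of infinite hierarchies due to Klainerman--Machedon and its adaptations by Chen--Pavlovi\'c, Chen--Hainzl--Pavlovi\'c--Seiringer, and Hong--Taliaferro--Xie, with the crucial modification needed to accommodate the \emph{nonlinear} (and nonlocal) potential terms $A_0$ and $\frac{1}{r^2}A_\theta^2$ that distinguish the IRCSS hierarchy from the linear hierarchies treated previously. Set $\Gamma = (\gamma^{(k)})$ to be the difference of two admissible $L^\infty_{[0,T)}\Hrad^{2/3}$ solutions with the same data; then each $\gamma^{(k)}(0)=0$ and we want $\gamma^{(k)}\equiv 0$. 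The first step is to Duhamel-iterate the mild formulation: one writes $\gamma^{(k)}(t)$ as a sum over Duhamel--Born expansions in which the collision operators $B_{j,k+1}$ and the potential commutators are applied repeatedly, expressing $\gamma^{(k)}$ in terms of $\gamma^{(k+n)}$ after $n$ iterations plus a remainder. Because both the $B$-terms and the potential terms here depend on lower-order densities $\rho = \gamma^{(1)}(t,x,x)$ (through \eqref{AthetaDef}, \eqref{A0Def}), one must expand those nested $\rho$-dependencies as well; this is the key structural difference from the linear case and is where the quantum de Finetti theorem enters.

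The second step is the de Finetti reduction. Using admissibility \eqref{admissible} and $\Tr\gamma^{(k)}=1$ (for the two solutions individually, before subtracting), the quantum de Finetti theorem represents each solution's marginals as averages $\gamma^{(k)}(t) = \int |\phi\rangle\langle\phi|^{\otimes k}\, d\mu_t(\phi)$ over a probability measure $\mu_t$ on the unit sphere of $\Lsym$ (here $L^2$ of radial functions with measure $r\,dr$). One then substitutes this representation into the Duhamel--Born expansion. The effect is to replace the operator-valued iterated integrals by scalar-valued ones: each factor $B_{j,k+1}$ acting on $|\phi\rangle\langle\phi|^{\otimes(k+1)}$ collapses to multiplication by $|\phi(r_j)|^2$, and each $\rho$ collapses to $|\phi(r)|^2$, so the potential terms $A_0,A_\theta$ become the explicit nonlocal functionals of $|\phi|^2$ appearing in \eqref{AthetaDef}--\eqref{A0Def}. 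This linearizes the combinatorics: the board-game / Klainerman--Machedon reorganization of the $(k+n)!$-many Duhamel terms into $O(C^n)$ groups goes through as in the NLS case, now with the potential-commutator terms treated as additional "types" of vertices in the Feynman-graph bookkeeping.

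The third and technically decisive step is the multilinear estimate. After the de Finetti substitution one must bound, uniformly in the iteration depth $n$, space-time integrals of the form $\int_0^t \cdots \int \bigl\| U^{(k)}(t-t_1)\, \mathcal{O}_1\, U^{(k+?)}(t_1-t_2)\cdots \bigr\|$ where the $\mathcal{O}_i$ are either collision operators $B_{j,\cdot}$ or multiplication by the functionals $\frac12\int_{r_j}^\infty |\phi|^2\int_0^s|\phi|^2 u\,du\,\tfrac{ds}{s}$ and $\frac{1}{r_j^2}\bigl(\frac12\int_0^{r_j}|\phi|^2 s\,ds\bigr)^2$. The collision terms are handled by the radial Strichartz / $X_{s,b}$-type Klainerman--Machedon estimates at regularity $s=2/3$ (this is presumably why $\tfrac23$ is the threshold — it is the radial improvement over the $s=1$ KM bound in 2D, exploiting the extra decay of radial functions). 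The potential terms require Hardy-type inequalities: $\frac{1}{r^2}\bigl(\int_0^r |\phi|^2 s\,ds\bigr)^2$ must be controlled in an appropriate norm by $\|\phi\|_{H^{2/3}}$-type quantities, which is exactly where radiality is essential and where the nonradial equivariant case fails by the "logarithmic divergence" the authors mention. One shows each of the $O(C^n)$ grouped terms is bounded by $(CT^{\alpha})^n M^{2(k+n)}$ for some $\alpha>0$, so that for $T$ small the Duhamel--Born series telescopes to give $\gamma^{(k)}(t)=0$ on $[0,T_0]$; a continuation/bootstrap in time using the uniform-in-$t$ bound $\sup_t \Tr(|S^{(k,2/3)}\gamma^{(k)}(t)|)<M^{2k}$ then extends this to all of $[0,T)$.

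I expect the main obstacle to be the multilinear estimate for the nonlocal potential terms, specifically proving a Klainerman--Machedon-type space-time bound for expressions containing the Hardy-weight factor $\frac{1}{r_j^2}(\int_0^{r_j}\rho\, s\,ds)^2$ when it sits \emph{between} two Schr\"odinger propagators inside the iterated Duhamel integral. Unlike a pure multiplication operator by a bounded potential, this object interacts with the derivative gains from the propagators in a delicate way, and one must show the borderline $r^{-2}$ singularity is tamed by the two powers of $\int_0^r\rho$, uniformly through all $n$ iterations; keeping the constants at $C^n$ rather than $n!$-type growth while carrying these extra vertices through the combinatorial reorganization is the crux.
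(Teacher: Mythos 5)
Your proposal captures the broad outline shared with the paper (Duhamel--Born iteration, quantum de Finetti, Klainerman--Machedon board game, multilinear and Hardy-type bounds), but it misses the single structural idea that makes the nonlinear case different from the linear hierarchies, and without it your closing argument does not close.

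The gap is in the claim that ``one shows each of the $O(C^n)$ grouped terms is bounded by $(CT^{\alpha})^n M^{2(k+n)}$, so that for $T$ small the series telescopes to give $\gamma^{(k)}(t)=0$.'' That is exactly the argument for a \emph{linear} hierarchy, where the difference of two solutions is again a solution, so after $l_c$ iterations everything is expressed solely in terms of $\gamma^{(l_c+2)}$. Here the difference $\gamma^{(k)}=\gamma^{(k)}_1-\gamma^{(k)}_2$ does \emph{not} solve the hierarchy, because $\mathcal{A}^{(k)}$ depends nonlinearly on $\rho=\gamma^{(1)}(t,r,r)$. When you iterate Duhamel, at every level $k$ there is a leftover term $G^{(k)}=\int U^{(k)}(t-s)\bigl(\mathcal{A}_1^{(k)}\gamma^{(k)}+\mathcal{A}^{(k)}\gamma_2^{(k)}\bigr)\,ds$ that never gets pushed up in the hierarchy. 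The paper therefore splits $\gamma^{(1)}(t_1)=\mathrm{NP}^{(l_c)}+\mathrm{IP}^{(l_c)}$: the interaction part $\mathrm{IP}$ is bounded by $(Ct_1^{1/3})^{l_c}$ as you expect and vanishes as $l_c\to\infty$, but the nonlinear part $\mathrm{NP}$ does \emph{not} shrink with $l_c$; it is instead shown to satisfy $\Tr|\mathrm{NP}^{(l_c)}|\le Ct_1\sup_{t\le t_1}\Tr|\gamma^{(1)}(t)|$. The proof then closes by a Gronwall-type absorption: for $CT<\tfrac12$ one gets $\tfrac12\sup_t\Tr|\gamma^{(1)}|\le (CT^{1/3})^{l_c}\to 0$. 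Your proposal never isolates this ``nonlinear part'' and never obtains an estimate of the form $t_1\sup\Tr|\gamma^{(1)}|$ that can be absorbed, so the residue from the potential commutators is unaccounted for. This is the core idea of the paper and not merely a matter of bookkeeping.

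A few secondary inaccuracies: the collision operators $B_{j,k+1}=\Tr_{k+1}[\delta(x_j-x_{k+1}),\cdot]$ are linear and do not depend on $\rho$; only the $\mathcal{A}$-terms do, and de Finetti is applied separately to $\gamma_1^{(k)}$ and $\gamma_2^{(k)}$ (plus twice more to the two $\rho$'s inside $\mathcal{A}$), producing a \emph{signed} measure $\mu_t=\mu_t^{(1)}-\mu_t^{(2)}$ for the difference; this matters because $\Tr|\gamma^{(1)}|=\int d|\mu_t|$ is precisely the quantity that appears in the Gronwall step. Also, the regularity $s=\tfrac23$ is not a ``radial KM improvement''; it is dictated by the Sobolev embedding $H^{2/3}(\mathbb{R}^2)\hookrightarrow L^6(\mathbb{R}^2)$, which is needed in the $\mathrm{IP}$ estimate because the last collision operator there is not followed by a propagator. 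The nonlinear part and the trilinear estimates alone would only require $s=\tfrac12$.
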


\begin{thm}[Unconditional uniqueness for the Chern-Simons-Schr\"{o}dinger
system]
\label{thm:2} There is at most one $L_{t\in \lbrack 0,T)}^{\infty }H^{\frac{2%
}{3}}(\mathbb{R}^{2})$ solution to the radial Chern-Simons-Schrodinger
system \eqref{eqn:radial}.
\end{thm}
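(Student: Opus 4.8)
The plan is to deduce Theorem~\ref{thm:2} from Theorem~\ref{thm:main} by the standard factorization argument. Suppose $\phi_1, \phi_2 \in L^\infty_{t\in[0,T)} H^{2/3}(\mathbb{R}^2)$ are two radial solutions of \eqref{eqn:radial} with the same initial data $\phi_0$. First I would form, for each solution, the sequence of factorized density matrices
\begin{equation*}
\gamma_i^{(k)}(t, \r_k, \r_k^\prime) = \prod_{j=1}^k \phi_i(t, r_j) \overline{\phi_i(t, r_j^\prime)}, \qquad i = 1, 2,
\end{equation*}
together with the potential $A$ generated by $\phi_i$ through \eqref{AthetaDef}--\eqref{A0Def} (with $\rho = |\phi_i|^2$). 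As observed in the excerpt, each pair $(A, \{\gamma_i^{(k)}\})$ solves the infinite radial CSS hierarchy \eqref{Ihierarchy}. The next step is to verify the hypotheses of Theorem~\ref{thm:main} for both sequences: (i) the membership $\{\gamma_i^{(k)}\} \in L^\infty_{t\in[0,T)} \Hrad^{2/3}$, which follows from $\Tr(|S^{(k,2/3)} \gamma_i^{(k)}(t)|) = \|\phi_i(t)\|_{H^{2/3}}^{2k} \le M^{2k}$ with $M := \sup_i \sup_{t} \|\phi_i(t)\|_{H^{2/3}} < \infty$; (ii) admissibility, i.e. $\Tr \gamma_i^{(k)} = \|\phi_i(t)\|_{L^2}^{2k}$ and the consistency relation \eqref{admissible}, both of which are immediate for factorized tensor powers once the solutions are normalized (or, if not normalized, after dividing by the conserved charge \eqref{charge}); and (iii) that these are \emph{mild} solutions, which one checks by applying the Duhamel formula to \eqref{eqn:radial} for $\phi_i$ and taking the $k$-fold tensor product, using that $U^{(k)}(t)(\phi^{\otimes k} \otimes \bar\phi^{\otimes k})$ evolves each factor by the one-particle Schr\"odinger propagator.

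Since $\gamma_1^{(1)}(0) = \gamma_2^{(1)}(0) = \phi_0 \otimes \bar\phi_0$ and hence $\gamma_1^{(k)}(0) = \gamma_2^{(k)}(0)$ for all $k$, Theorem~\ref{thm:main} forces $\gamma_1^{(k)}(t) = \gamma_2^{(k)}(t)$ for all $t \in [0,T)$ and all $k$. Specializing to $k = 1$ gives
\begin{equation*}
\phi_1(t, r)\overline{\phi_1(t, r^\prime)} = \phi_2(t, r)\overline{\phi_2(t, r^\prime)}
\end{equation*}
for a.e.\ $r, r^\prime$. A rank-one density matrix determines its generating vector up to a unimodular scalar, so for each $t$ there is $\omega(t) \in \mathbb{C}$ with $|\omega(t)| = 1$ such that $\phi_1(t) = \omega(t)\phi_2(t)$; evaluating the common kernel on the diagonal also yields $|\phi_1(t,r)|^2 = |\phi_2(t,r)|^2$ directly. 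To pin down the phase $\omega(t)$ and conclude $\phi_1 \equiv \phi_2$ I would argue that $\omega(t)$ is constant: writing $\rho := |\phi_1|^2 = |\phi_2|^2$, the potentials $A_\theta, A_0$ in \eqref{AthetaDef}--\eqref{A0Def} depend only on $\rho$, so both $\phi_1$ and $\phi_2 = \omega^{-1}\phi_1$ satisfy the \emph{same} linear Schr\"odinger equation $(i\partial_t + \Delta)\psi = (A_0 + r^{-2}A_\theta^2 - g\rho)\psi$; substituting $\phi_2 = \omega^{-1}\phi_1$ and subtracting shows $(\partial_t \omega^{-1})\phi_1 = 0$, whence $\omega$ is constant in $t$, and at $t = 0$ it equals $1$.

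The main obstacle I anticipate is purely bookkeeping rather than conceptual: one must confirm that the factorized sequences built from an $H^{2/3}$ solution genuinely meet the precise definition of an \emph{admissible mild solution in} $L^\infty_{t\in[0,T)}\Hrad^{2/3}$ used in Theorem~\ref{thm:main}—in particular that the nonlinear terms of \eqref{Ih} (the nonlocal potential integrals and the contraction $B_{j,k+1}\gamma^{(k+1)}$) are well-defined and that the Duhamel identity for $\{\gamma_i^{(k)}\}$ really is the $k$-particle identity obtained by tensoring the one-particle Duhamel formula for $\phi_i$. This requires the mild formulation of \eqref{eqn:radial} at regularity $H^{2/3}$ to make sense, which in turn may need a short a priori estimate on $A_0$ and $r^{-2}A_\theta^2$ in terms of $\|\phi_i\|_{H^{2/3}}$ (the radial Hardy inequality handles the $r^{-2}A_\theta^2$ term, since $A_\theta(r) = -\tfrac12\int_0^r \rho s\,ds$ vanishes quadratically at the origin). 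Once this identification is in place the rest is, as above, an application of Theorem~\ref{thm:main} and elementary linear algebra.
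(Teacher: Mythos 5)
Your proposal is correct and is precisely the factorization argument the paper has in mind; the paper never spells out a proof of Theorem~\ref{thm:2}, presenting it as an immediate consequence of Theorem~\ref{thm:main} together with the observations on factorized solutions made just before the theorem statements. Two small points are worth tightening, both of which you partially anticipated. First, the admissibility requirement $\Tr\gamma^{(k)}=1$ for factorized kernels forces $\|\phi_i(t)\|_{L^2}=1$ for all $t$, so the argument tacitly invokes conservation of charge for mild solutions at $H^{2/3}$ regularity; this should be stated as a hypothesis or verified. Second, for the phase recovery, differentiating $\omega(t)$ at $L^\infty_t H^{2/3}$ regularity needs justification. A cleaner route, once $\rho_1=\rho_2=:\rho$ and hence the potential $V:=A_0+r^{-2}A_\theta^2-g\rho$ coincides for both solutions, is to set $w:=\phi_1-\phi_2=(\omega(t)-1)\phi_2$; then $\|Vw\|_{L^2}=|\omega(t)-1|\,\|V\phi_2\|_{L^2}$, and $\|V\phi_2\|_{L^2}$ is uniformly bounded by \eqref{A0}, \eqref{Athet2}, and the embedding $H^{2/3}(\mathbb{R}^2)\hookrightarrow L^6$. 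The mild Duhamel formula for $w$ with $w(0)=0$ then closes by Gronwall, yielding $\omega\equiv 1$ without ever differentiating $\omega$ in time.
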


Before explaining our main theorem, we first remark that deriving mean-field
equations from many-body systems by studying infinite hierarchies is a very
rich subject. For works related to the Boltzmann equation, see \cite%
{Lanford,King,ArCaIa91,DiluteGasBook,Saint-Raymond}. For works related to
the Hartree equation, see \cite%
{Spohn,Frolich,E-Y1,RodnianskiAndSchlein,KnowlesAndPickl,GMM1,GMM2,Chen2ndOrder,ChLeSc11,MichelangeliSchlein,AmNi08,AmNi11,LeNaRo14}%
. For works related to the cubic NLS, see \cite%
{AGT,E-E-S-Y1,E-S-Y1,E-S-Y2,E-S-Y3,E-S-Y5,KlMa08,KiScSt11,TChenAndNP,TChenAndNpGP1,TCNPNT1,TChenAndNPSpace-Time,Pickl,ChenAnisotropic,Chen3DDerivation,BeOlSc12,GM1,C-H3Dto2D,C-H2/3,ChHaPaSe13,C-HFocusing,HoTaXi14,GrSoSt14,SoSt13,Sohinger2,Sohinger3}%
. For works related to the quantum Boltzmann equation, see \cite%
{BCEP1,BCEP2,BCEP4,ErdosFock}. The infinite hierarchies considered
previously to the present one are all linear. In contrast to this, the
infinite radial Chern-Simons-Schr\"{o}dinger hierarchy is nonlinear.

For our problem we have taken the phrase \textquotedblleft unconditional
uniqueness" from the study of the NLS. It is shown by Cercignani's
counterexample \cite{DiluteGasBook} that solutions to infinite hierarchies
like the Boltzmann hierarchy and the Gross-Pitaevskii hierarchy are
generally not unconditionally unique in the sense that a solution is not
uniquely determined by the initial datum unless one assumes appropriate
space-time bounds on the solution. In the NLS literature, \textquotedblleft
unconditional uniqueness" usually means establishing uniqueness without
assuming that some Strichartz norm is finite. Since we are using tools from
the study of the NLS, we therefore call our main theorems unconditional
uniqueness theorems.\footnote{%
In other words, the uniqueness theorems regarding the Gross-Pitaevskii
hierarchies \cite{KlMa08,KiScSt11,ChenAnisotropic,C-HFocusing,GrSoSt14} are
conditional, whereas \cite{AGT,E-S-Y2,ChHaPaSe13,HoTaXi14,Sohinger3} are
unconditional in the NLS sense. Yet, they are all considered conditional in
the Boltzmann literature.}

Finally, we remark that, for the proof of the main theorems, we apply the
quantum de Finetti theorem similarly to as is done in \cite%
{ChHaPaSe13,HoTaXi14}, but with adjustments tailored to deal with the
nonlinearity in the infinite hierarchy that we consider. The quantum de
Finetti theorem is a version of the classical Hewitt-Savage theorem. T.
Chen, C. Hainzl, N. Pavlovic, and R. Seiringer are the first to apply the
quantum de Finetti theorem to the study of infinite hierarchies in the
quantum setting. For results regarding the uniqueness of the Boltzmann
hierarchy using the Hewitt-Savage theorem, see \cite{ArCaIa91}.

\section{Proof of the Main Theorem}

We will prove that if we are given two $L_{[0,T]}^{\infty }\mathfrak{H}_{%
\mathrm{rad}}^{\frac{2}{3}}$ solutions $\{\gamma _{1}^{(k)}\}$ and $\{\gamma
_{2}^{(k)}\}$ to system \eqref{Ihierarchy} subject to the same initial
datum, then the trace norm of the difference $\{\gamma ^{(k)}=\gamma
_{1}^{(k)}-\gamma _{2}^{(k)}\}$ is zero. In contrast to the usual infinite
hierarchies, (e.g. Boltzmann, Gross-Pitaevskii, ...), system %
\eqref{Ihierarchy} is nonlinear. Thus $\gamma ^{(k)}$ does not solve system %
\eqref{Ihierarchy}. In order to show that $\gamma ^{(k)}$ has zero trace
norm, we first express $\gamma ^{(k)}$ as a suitable Duhamel-Born series,
which contains a nonlinear part and an interaction part (see \S \ref%
{sec:setup for the proof}). These two parts we estimate separately, with
bounds contained respectively in Theorems \ref{Thm:estimate for NP} and \ref%
{Thm:estimate for IP}, which together constitute our main estimates. In \S %
\ref{Sec:ProofWithMainEstimates} we prove the main theorem, Theorem \ref%
{thm:main}, assuming the main estimates. The proof of Theorem \ref%
{Thm:estimate for NP} is postponed to \S \ref{sec:multi} (and Theorem \ref%
{Thm:estimate for IP} we handle in this section).

\subsection{Setup\label{sec:setup for the proof}}

Set for short 
\begin{equation}
a\left( r_{j}\right) := A_{0}(t,r_{j})+\frac{1}{r_{j}^{2}}A_{\theta
}^{2}(t,r_{j})  \label{a-def}
\end{equation}%
and 
\begin{equation}
a\left( \mathbf{r}_{k}\right) := \sum_{j=1}^{k}a(r_{j})  \label{a-sum}
\end{equation}%
Let $\mathcal{A}^{(k)}$ denote the operator that acts according to%
\begin{equation}
\mathcal{A}^{(k)}f := \left[ a(\mathbf{r}_{k}),f\right]  \label{cA-def}
\end{equation}%
Also, set for short 
\begin{equation}
B_{k+1}:=\sum_{j=1}^{k}B_{j,k+1}=\sum_{j=1}^{k}\Tr\nolimits_{k+1}\left[
\delta (x_{j}-x_{k+1}),\cdot \right]  \label{Bshort}
\end{equation}
With these abbreviations, the first equation of \eqref{Ihierarchy} assumes
the form 
\begin{equation}
i\partial _{t}\gamma^{(k)}+\left[ \Delta _{\mathbf{x}_{k}},\gamma^{(k)}%
\right] = \mathcal{A}^{(k)}\gamma^{(k)}-gB_{k+1}\gamma^{(k+1)}  \label{GP}
\end{equation}

\begin{rem}
The operator $\mathcal{A}^{(k)}$ is linear, but itself depends upon $\gamma
^{(1)}$. In fact, it only depends upon the diagonal $\rho (t,r)=\gamma
^{(1)}(t,r,r)$. The term $\mathcal{A}^{(k)}\gamma ^{(k)}$ is therefore
better thought of as a nonlinear term rather than a linear one.
\end{rem}

Let $\{\gamma _{1}^{(k)}\}$ and $\{\gamma _{2}^{(k)}\}$ be solutions subject
to the same initial data, with, respectively, $\rho _{1}(t,r):=\gamma
_{1}^{(1)}(t,r,r)$ and $\rho _{2}(t,r):=\gamma _{2}^{(1)}(t,r,r)$. Let $%
\gamma ^{(k)}:=\gamma _{1}^{(k)}-\gamma _{2}^{(k)}$. Then 
\begin{equation}
i\partial _{t}\gamma ^{(k)}+\left[ \Delta _{\mathbf{x}_{k}},\gamma ^{(k)}%
\right] =\mathcal{A}_{1}^{(k)}\gamma _{1}^{(k)}-\mathcal{A}_{2}^{(k)}\gamma
_{2}^{(k)}-gB_{k+1}\gamma ^{(k+1)}  \label{I-diff1}
\end{equation}%
We can rewrite \eqref{I-diff1} using the relation 
\begin{equation*}
\mathcal{A}_{1}^{(k)}\gamma _{1}^{(k)}-\mathcal{A}_{2}^{(k)}\gamma
_{2}^{(k)}=\mathcal{A}_{1}^{(k)}\gamma ^{(k)}+\mathcal{A}^{(k)}\gamma
_{2}^{(k)},
\end{equation*}%
where now 
\begin{equation*}
\mathcal{A}^{(k)}:=\mathcal{A}_{1}^{(k)}-\mathcal{A}_{2}^{(k)},
\end{equation*}%
so that it becomes 
\begin{equation}
i\partial _{t}\gamma ^{(k)}+\left[ \Delta _{\mathbf{x}_{k}},\gamma ^{(k)}%
\right] =\mathcal{A}_{1}^{(k)}\gamma ^{(k)}+\mathcal{A}^{(k)}\gamma
_{2}^{(k)}-gB_{k+1}\gamma ^{(k+1)},  \label{I-diff2}
\end{equation}%
or, equivalently, 
\begin{equation*}
(i\partial _{t}+\Delta _{\mathbf{x}_{k}}-\Delta _{\mathbf{x}_{k}^{\prime
}})\gamma ^{(k)}=\mathcal{A}_{1}^{(k)}\gamma ^{(k)}+\mathcal{A}^{(k)}\gamma
_{2}^{(k)}-gB_{k+1}\gamma ^{(k+1)}.
\end{equation*}%
Recalling the corresponding linear propagator $U^{(k)}(t)$ defined in %
\eqref{bigUprop}, we write \eqref{I-diff2} in integral form, i.e.,%
\begin{equation}
\gamma ^{(k)}(t_{k})= - i g \int_{0}^{t_{k}}dt_{k+1}U^{(k)}(t_{k}-t_{k+1})\left[ 
\mathcal{A}_{1}^{(k)}\gamma ^{(k)}(t_{k+1})+\mathcal{A}^{(k)}\gamma
_{2}^{(k)}(t_{k+1})+B_{k+1}\gamma ^{(k+1)}(t_{k+1})\right] ,
\label{eqn:difference in duhamel form}
\end{equation}%
In invoking this formula in future calculations, we set $g = -1$ for simplicity and
we ignore the $i$ in front so that we
do not need to keep track of its exact power, as the precise power is not
relevant to the estimates.

\begin{rem}
The choice of $g=-1$ corresponds to a defocusing case in \eqref{equivariant}%
. It is important to note, however, that the choice $g=-1$ at this step is
purely for the sake of convenience; all subsequent arguments can accommodate
any $g\neq -1$ at the cost of certain powers of $\left\vert g\right\vert $.
In particular, our arguments apply to the self-dual case $g=1$, which is the
most interesting from the physical point of view.
\end{rem}

For the purpose of proving unconditional uniqueness, it suffices to show $%
\gamma ^{(1)}=0$. Iterating \eqref{eqn:difference in duhamel form} $l_{c}$
times\footnote{%
Here, $l_{c}$ stands for the level of coupling. When $l_{c}=0$, one recovers %
\eqref{I-diff2}.}, we obtain%
\begin{equation}  \label{eqn:duhamel before board game}
\begin{split}
\gamma^{(1)}(t_{1}) =&\int_{0}^{t_{1}}dt_{2}U^{(1)}(t_{1}-t_{2})\left( 
\mathcal{A}_{1}^{(1)}\gamma ^{(1)}(t_{2})+\mathcal{A}^{(1)}\gamma
_{2}^{(1)}(t_{2})\right)
+\int_{0}^{t_{1}}dt_{2}U^{(1)}(t_{1}-t_{2})B_{2}\gamma ^{(2)}(t_{2}) \\
=&\int_{0}^{t_{1}}dt_{2}U^{(1)}(t_{1}-t_{2})\left( \mathcal{A}%
_{1}^{(1)}\gamma ^{(1)}(t_{2})+\mathcal{A}^{(1)}\gamma
_{2}^{(1)}(t_{2})\right) \\
&+\int_{0}^{t_{1}}dt_{2}U^{(1)}(t_{1}-t_{2})B_{2}%
\int_{0}^{t_{2}}dt_{3}U^{(2)}(t_{2}-t_{3})\left( \mathcal{A}_{1}^{(2)}\gamma
^{(2)}(t_{3})+\mathcal{A}^{(2)}\gamma _{2}^{(2)}(t_{3})\right) \\
&+\int_{0}^{t_{1}}dt_{2}U^{(1)}(t_{1}-t_{2})B_{2}%
\int_{0}^{t_{2}}dt_{3}U^{(2)}(t_{2}-t_{3})B_{3}\gamma ^{(3)}(t_{3}) \\
=& \; \ldots \\
=& \; \mathrm{NP}^{(l_{c})}+ \mathrm{IP}^{(l_{c})}
\end{split}%
\end{equation}%
where $\mathrm{NP}^{(l_{c})}$ and $\mathrm{IP}^{(l_{c})}$, the nonlinear
part and the interaction part, respectively, are given by 
\begin{equation}
\mathrm{NP}^{(l_{c})}=G^{(1)}+\sum_{r=1}^{l_{c}}\int_{0}^{t_{1}} \cdots
\int_{0}^{t_{r}}dt_{2}\cdots dt_{r+1}U^{(1)}(t_{1}-t_{2})B_{2}\cdots
U^{(r)}(t_{r}-t_{r+1})B_{r+1}G^{(r+1)}(t_{r+1})  \label{eqn:NP}
\end{equation}%
and 
\begin{equation}
\mathrm{IP}^{(l_{c})}=\int_{0}^{t_{1}} \cdots
\int_{0}^{t_{l_{c}+1}}dt_{2}\cdots
dt_{l_{c}+1}U^{(1)}(t_{1}-t_{2})B_{2}...U^{(l_{c}+1)}(t_{l_{c}}-t_{l_{c}+1})B_{l_{c}+2}\gamma ^{(l_{c}+2)}(t_{l_{c}+2})
\label{eqn:IP}
\end{equation}
where 
\begin{equation*}
G^{(k)}(t_{k}) := \int_{0}^{t_{k}}dt_{k+1}U^{(k)}(t_{k}-t_{k+1})\left( 
\mathcal{A}_{1}^{(k)}\gamma ^{(k)}(t_{k+1})+\mathcal{A}^{(k)}\gamma
_{2}^{(k)}(t_{k+1})\right)
\end{equation*}

\subsection{Proof Assuming the Main Estimates\label%
{Sec:ProofWithMainEstimates}}

\begin{thm}
\label{Thm:estimate for NP}There exists a constant $C>0$ such that%
\begin{equation*}
\Tr\left\vert \mathrm{NP}^{(l_{c})}\left( t_{1}\right) \right\vert \leq
Ct_{1}\sup_{t\in \left[ 0,t_{1}\right] }\Tr\left\vert \gamma
^{(1)}(t)\right\vert
\end{equation*}%
for all coupling levels $l_{c}$ and all sufficiently small $t_{1}$.
\end{thm}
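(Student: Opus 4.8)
The plan is to bound $\Tr\lvert\mathrm{NP}^{(l_c)}(t_1)\rvert$ by estimating the series \eqref{eqn:NP} term by term and summing, with every constant uniform in $l_c$. Structurally, each summand carries exactly one ``cap'' $G^{(r+1)}$, which supplies a single time integration --- the source of the overall factor $t_1$ --- and a single occurrence of the nonlinear potential operators; the $r$ collision operators $B_2,\dots,B_{r+1}$ and the free propagators standing in front of the cap are what must be controlled uniformly in $r$. So the argument divides into (i) operator bounds for $\mathcal A_i^{(k)}$ and for the difference $\mathcal A^{(k)}$, (ii) trace bounds for the collision operators $B_{k+1}$, (iii) a combinatorial reorganization of the time-ordered integrals, and (iv) assembly and summation.

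For (i) I would work from the closed forms \eqref{AthetaDef}--\eqref{A0Def}. Cauchy--Schwarz in the radial variable with the weight $r\,dr$ gives $\lVert A_\theta(t)\rVert_{L^\infty}\lesssim\lVert\rho(t)\rVert_{L^1(\mathbb R^2)}$ and $\lVert r^{-2}A_\theta^2(t)\rVert_{L^\infty}\lesssim\lVert\rho(t)\rVert_{L^2(\mathbb R^2)}^2$, together with a comparable bilinear bound for $\lVert A_0(t)\rVert_{L^\infty}$; interpolating the diagonal restriction $\rho=\gamma^{(1)}(\cdot,\cdot)$ against the a priori $\Hrad^{2/3}$ bound (using $H^{1/2}(\mathbb R^2)\hookrightarrow L^4$ and $(1-\Delta)^{1/2}\le(1-\Delta)^{2/3}$) then yields $\lVert a_i(t)\rVert_{L^\infty}\lesssim C(M)$, so that multiplication by $a(\mathbf r_k)$, hence $\mathcal A_i^{(k)}$, is bounded on trace class with norm $\lesssim k\,C(M)$. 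For the difference one uses crucially that $a$ is \emph{purely quadratic} in $\rho$, so that $a_1-a_2$ is bilinear in $(\rho_1+\rho_2,\rho_1-\rho_2)$: the slot $\rho_1+\rho_2$ goes into the $M$-controlled norm and the slot $\rho_1-\rho_2=\gamma^{(1)}(\cdot,\cdot)$ into $L^1$, where $\lVert\gamma^{(1)}(\cdot,\cdot)\rVert_{L^1}\le\Tr\lvert\gamma^{(1)}\rvert$; away from the origin this is immediate, while near the origin the weight $r^{-2}$ forces one to pair the $L^1$ information against a fractional Hardy--Sobolev inequality of the form $\lVert\lvert x\rvert^{-2/3}\psi\rVert_{L^2}\lesssim\lVert\psi\rVert_{\dot H^{2/3}}$ applied to a test function extracted from the quantum de Finetti representation of $\gamma_2^{(k)}$ (whose availability is guaranteed by admissibility). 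The exponent $2/3$ is exactly the largest index strictly below the two-dimensional endpoint $n/2=1$ at which this Hardy--Sobolev inequality survives, and this is precisely why the radial case closes and the equivariant case $m>0$, with its extra $r^{-2}A_\theta$ term of lower degree in $\rho$, does not. The aim of this step is $\lVert\mathcal A^{(k)}\gamma_2^{(k)}\rVert_{\mathcal L^1}\lesssim k\,C(M)\,M^{2k}\,\sup_{[0,t_1]}\Tr\lvert\gamma^{(1)}\rvert$.

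For (ii), in the radial reduction $B_{j,k+1}$ restricts a kernel in the variables $\mathbf r_{k+1},\mathbf r_{k+1}'$ along the codimension-one set $\{r_{k+1}=r_j\}$, so the one-dimensional Sobolev trace theorem applies --- it is here that $\tfrac23>\tfrac12$ is used --- yielding $\Tr\lvert B_{k+1}\gamma^{(k+1)}\rvert\lesssim k\,\Tr\lvert S^{(k+1,2/3)}\gamma^{(k+1)}\rvert$; combined with the unitarity of $U^{(k)}$ (which commutes with $S^{(k,2/3)}$) this lets one peel the propagators and collision operators off of $\Tr\lvert\cdot\rvert$ one at a time. For (iii) I would run the Klainerman--Machedon board game, in the form adapted to the quantum de Finetti arguments cited, replacing the naive $\sim r!$ time-ordered terms arising from the expansion $B_{k+1}=\sum_{j\le k}B_{j,k+1}$ by $\lesssim C^r$ ``acceptable'' ones while keeping the time-simplex volume $t_1^{\,r}/r!$. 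Putting (i)--(iii) together, the $r$-th summand of \eqref{eqn:NP} is bounded by $C(M)^{\,r}\,\dfrac{t_1^{\,r}}{r!}\,M^{2(r+1)}\cdot t_1\,\sup_{[0,t_1]}\Tr\lvert\gamma^{(1)}\rvert$ up to polynomial-in-$r$ factors, and summing the resulting convergent series over $r=0,1,\dots,l_c$ --- for $t_1$ below a threshold depending only on $M$ --- gives the claimed estimate with a constant $C=C(M)$ independent of $l_c$ (the case $r=0$ being the $G^{(1)}$ term, estimated directly from the bounds in (i)).

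The hard part will be (i), and specifically the extraction of one \emph{clean, linear} power of $\sup_{[0,t_1]}\Tr\lvert\gamma^{(1)}\rvert$ from the most singular piece $r^{-2}A_\theta^2$ near the origin, where the Hardy--Sobolev estimates are tight; this is the same phenomenon that pins down the exponent $2/3$ and excludes $m>0$. A secondary difficulty is the branch of the cap $G^{(r+1)}$ containing $\mathcal A_1^{(r+1)}\gamma^{(r+1)}$, in which the higher marginal $\gamma^{(r+1)}$ --- not $\gamma^{(1)}$ --- appears: one must either Duhamel-expand this branch once more, so that the extra powers of $\mathcal A_1^{(r+1)}$ (each bounded by $\lesssim k\,C(M)$) are summed against the time factors $t^n/n!$, or use the de Finetti representation to reduce it, in either case preserving uniformity in $l_c$.
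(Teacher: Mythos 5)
Your outline matches the paper's skeleton (Klainerman--Machedon board game, a once-and-for-all $t_1$ from the cap $G^{(r+1)}$, a per-step bound uniform in $r$, and then summation), but your step (ii) has a genuine gap. You propose to bound $\Tr\lvert B_{k+1}\gamma^{(k+1)}\rvert$ by a Sobolev-trace theorem of the form $\lesssim k\,\Tr\lvert S^{(k+1,2/3)}\gamma^{(k+1)}\rvert$ and then to peel the propagators and collision operators off of $\Tr\lvert\cdot\rvert$ iteratively, using unitarity of $U^{(k)}$. Iterating in this way stacks the derivative operators $S^{(\cdot,2/3)}$ onto the cap, and at the step at which the $j$-th pair (the one carrying the potential $a(r_j)$) is restricted you must control the $H^{2/3}$ norm of the product $a\phi$, not merely an $L^\infty$ bound on $a$. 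Your step (i) provides only $\lVert a\rVert_{L^\infty}\lesssim C(M)$, which does not control $\lVert a\phi\rVert_{H^{2/3}}$, and proving a $W^{2/3,\infty}$ bound for $a$ is nontrivial because of the singular weight in $r^{-2}A_\theta^{2}$. The paper sidesteps this entirely: Lemma~\ref{lem:tricol} shows that each application of $B^{\pm}U$ to a tensor product produces a trilinear form $T$ of the type \eqref{T} in one variable and a linear evolution in the other, and the iterative bounds then invoke the space--time trilinear estimates \eqref{TH} and \eqref{TL2}, always placing the factor carrying $a\phi$ in $L^2$ (never $\dot H^s$); this is exactly why the paper remarks that no Sobolev embedding is needed for $\mathrm{NP}$. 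At the bottom, Corollary~\ref{Corollary:Key2} provides the $L^2$ bound $\lVert a_{\rho_1,\rho_2}\psi_3\rVert_{L^2}$, and the $\min_{\tau\in S_3}$ there is precisely what lets one put the signed-measure factor (the one whose integral is $\int d\lvert\mu_t\rvert=\Tr\lvert\gamma^{(1)}\rvert$) in $L^2$ regardless of where it appears. The de Finetti representation of $\gamma^{(k)}$, $\gamma_1^{(k)}$, $\gamma_2^{(k)}$ as integrals of tensor powers is thus not an afterthought but the organizing structure from \eqref{eqn:core of G(k)} onward; without it, your peeling scheme cannot convert trace bounds on higher marginals into a single factor $\Tr\lvert\gamma^{(1)}\rvert$ while simultaneously maintaining the $H^{2/3}$ bounds.

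A second, smaller misstep is the attribution of the exponent $2/3$ to a radial Hardy--Sobolev bound for $r^{-2}A_\theta^2$ near the origin. All of the weighted and nonlinear estimates used for the cap bound (Lemmas 4.5, 4.6, Corollary~\ref{Corollary:Key2}) are proved with $s=1/2$, and the paper explicitly observes at the end of \S\ref{sec:multi} that the nonlinear part only needs $s=1/2$. The threshold $s=2/3$ is forced instead by the interaction part: \eqref{TH} requires $s\le 2/3$, and the proof of Theorem~\ref{Thm:estimate for IP} relies on the 2D Sobolev embedding $H^{2/3}(\mathbb{R}^2)\hookrightarrow L^6(\mathbb{R}^2)$. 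This misattribution is not fatal to your argument, but it suggests the mechanism you have in mind for the iterative step is not the one that actually closes.
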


\begin{proof}
We postpone the proof to \S \ref{Sec:main nl estimate}.
\end{proof}

\begin{thm}
\label{Thm:estimate for IP}There exists a constant $C>0$ such that%
\begin{equation*}
\Tr\left\vert \mathrm{IP}^{(l_{c})}\left( t_{1}\right) \right\vert \leq
\left( Ct_{1}^{\frac{1}{3}}\right) ^{l_{c}}
\end{equation*}%
for all coupling levels $l_{c}.$
\end{thm}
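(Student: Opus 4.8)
The plan is to estimate $\mathrm{IP}^{(l_{c})}$ by the Klainerman--Machedon ``board game'' together with a collapsing (space--time collision) estimate adapted to the radial class $\Hrad^{2/3}$. First I would expand each operator $B_{k+1}=\sum_{j=1}^{k}B_{j,k+1}$ appearing in \eqref{eqn:IP}, so that $\mathrm{IP}^{(l_{c})}$ is written as a sum of $(l_{c}+1)!$ Duhamel--Born terms, each indexed by an admissible contraction $\mu$ with $\mu_{k}\in\{1,\dots,k-1\}$ and each of the form
\[
\int_{0\le \tau_{l_{c}+2}\le\cdots\le \tau_{2}\le \tau_{1}} U^{(1)}(\tau_{1}-\tau_{2})\,B_{\mu_{2},2}\cdots U^{(l_{c}+1)}(\tau_{l_{c}+1}-\tau_{l_{c}+2})\,B_{\mu_{l_{c}+2},l_{c}+2}\,\gamma^{(l_{c}+2)}(\tau_{l_{c}+2})\,d\tau_{2}\cdots d\tau_{l_{c}+2}.
\]
Then, using Fubini on the time simplex together with the permutation symmetry \eqref{indistinguishability} of each $\gamma^{(k)}$ exactly as in the Gross--Pitaevskii literature, I would regroup these $(l_{c}+1)!$ terms into at most $C^{l_{c}}$ equivalence classes, each expressible as a single integral over a time-ordered region with a fixed ``upper echelon'' contraction structure. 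This is the step that removes the factorial and is precisely what makes a bound of the shape $(Ct_{1}^{1/3})^{l_{c}}$---rather than $l_{c}!\,(\cdots)^{l_{c}}$---attainable; since the reshuffling only touches the free propagators $U^{(k)}$ and the position-only operators $B_{j,k+1}$ (the nonlinear operators $\cA^{(k)}$ live in $\mathrm{NP}^{(l_{c})}$, not here), the combinatorics are identical to the cubic case and the count $C^{l_{c}}$ is standard.

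For a single representative term I would then peel off the operators $B_{\mu_{k},k}$ together with their time integrations one at a time, working from the outermost propagator inward. Each peeling step uses the basic radial collapsing estimate, schematically of the form
\[
\int_{0}^{t_{1}}\Tr\big| S^{(k,\,2/3)}\,B_{j,k+1}\,U^{(k+1)}(t_{1}-\tau)\,g(\tau)\big|\,d\tau \;\lesssim\; t_{1}^{1/3}\,\sup_{\tau\in[0,t_{1}]}\Tr\big| S^{(k+1,\,2/3)}\,g(\tau)\big|,
\]
valid in two dimensions at the regularity $2/3$ under radiality; because $S^{(k+1,\,2/3)}$ commutes with the free propagator, this estimate iterates cleanly. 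Each step thus costs a factor $Ct_{1}^{1/3}$ and lowers the particle index by one, while leaving the estimate at the $\Hrad^{2/3}$ scale. After the full iteration a representative term is bounded by $(Ct_{1}^{1/3})^{l_{c}}$ times $\sup_{t\in[0,t_{1}]}\Tr(|S^{(l_{c}+2,\,2/3)}\gamma^{(l_{c}+2)}(t)|)$, the innermost collision (which carries no free propagator) being handled with a simple Sobolev-type bound in the collision variable and a trivial time integration, which only improves the power of $t_{1}$. If needed, before this last step I would apply the quantum de Finetti theorem to $\gamma_{1}^{(l_{c}+2)}$ and $\gamma_{2}^{(l_{c}+2)}$ separately in order to present that factor in a form to which the collision estimate applies directly. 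By the a priori bound defining a mild solution, this last supremum is at most $M^{2(l_{c}+2)}=C^{l_{c}}$; summing over the $C^{l_{c}}$ board-game classes and absorbing constants then yields $\Tr|\mathrm{IP}^{(l_{c})}(t_{1})|\le (Ct_{1}^{1/3})^{l_{c}}$.

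The hard part is the radial collapsing estimate itself at the low regularity $2/3$: bounding $B_{j,k+1}U^{(k+1)}$ in trace norm, uniformly in $k$ and $j$, with a genuine positive power of time, is exactly the place where radiality is essential---this is the point at which the nonradial equivariant case breaks down because of the logarithmic divergences noted in the introduction---and obtaining the sharp exponent $1/3$ (rather than merely some $t_{1}^{\varepsilon}$) will require careful use of the radial Sobolev trace inequality and two-dimensional Strichartz-type estimates. A secondary but still necessary point is verifying that the board-game reshuffling, and with it the bound $C^{l_{c}}$ on the number of equivalence classes, goes through verbatim in this operator-kernel setting; as indicated above this reduces to the standard Gross--Pitaevskii bookkeeping because the reshuffling never interacts with the nonlinear terms.
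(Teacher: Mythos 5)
Your skeleton --- the board game to tame the combinatorics, the iterative peeling of collision operators, and the terminal Sobolev step for the propagator-less collision --- is aligned with the strategy behind the paper's one-paragraph proof, which defers to \cite{ChHaPaSe13}. But there is a genuine gap in the central mechanism. The ``radial collapsing estimate'' you posit,
\[
\int_{0}^{t_{1}}\Tr\big| S^{(k,\,2/3)}\,B_{j,k+1}\,U^{(k+1)}(t_{1}-\tau)\,g(\tau)\big|\,d\tau \;\lesssim\; t_{1}^{1/3}\,\sup_{\tau\in[0,t_{1}]}\Tr\big| S^{(k+1,\,2/3)}\,g(\tau)\big|,
\]
is not a known result and is not expected to hold for a general density matrix $g$. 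Klainerman--Machedon collapsing estimates live in Hilbert--Schmidt norms of the kernel with an $L^2_t$ gain on the left; upgrading them to the \emph{trace-norm} setting under nothing more than the a priori bound $\sup_t \Tr |S^{(k,2/3)}\gamma^{(k)}(t)| \le M^{2k}$ is precisely the obstruction that separates conditional from unconditional uniqueness, and the entire role of the quantum de Finetti theorem in \cite{ChHaPaSe13} --- and in this paper --- is to bypass it.

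Concretely, the correct order of operations is to apply de Finetti to $\gamma^{(l_c+2)}$ \emph{first}, writing it as an integral over the de Finetti measure of the factorized projections $(|\phi\rangle\langle\phi|)^{\otimes(l_c+2)}$. Inside that integral the full Duhamel--Born expansion becomes a product of Schr\"odinger evolutions of a single wave function $\phi$; the iterative peeling then reduces to the \emph{scalar} trilinear estimates \eqref{TH} and \eqref{TL2} at $s=2/3$, and the final propagator-less collision, which produces a $|\phi|^2\phi$ factor, is closed by the 2D Sobolev embedding $\|f\|_{L^6(\R^2)} \lesssim \|f\|_{H^{2/3}(\R^2)}$. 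Your proposal inverts this (peel first in trace norm, then apply de Finetti ``if needed'' before the last step), which is exactly where the argument breaks: the intermediate objects you peel are not factorized, so no trace-norm collapsing estimate is available for them. A secondary omission is that the combinatorics of $\IP$ is strictly harder than the board game alone --- as the Remark after Example~\ref{example:3B} points out, the collisions in $\IP$ generate multiple ``copies'' of $|\phi|^2\phi$, and the resulting factorization structure must be tracked via the binary-tree / graph bookkeeping of \cite{ChHaPaSe13}, which your argument does not address.
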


\begin{proof}
This estimate follows from the same method used for the corresponding term
in \cite{ChHaPaSe13}, which relies on the Quantum de Finetti theorem and on
a combinatorial analysis of the graphs that one can associate to the Duhamel expansions. 
One merely needs to replace the 3D trilinear estimates 
\cite[(6.19) and (6.20)]{ChHaPaSe13} with \eqref{TH} and \eqref{TL2},
respectively, taking $s=\frac{2}{3}$, and replace the 3D Sobolev estimate%
\begin{equation*}
\left\Vert f\right\Vert _{L^{6}(\mathbb{R}^{3})}\lesssim \left\Vert f\right\Vert
_{H^{1}(\mathbb{R}^{3})}
\end{equation*}%
with the 2D Sobolev estimate%
\begin{equation*}
\left\Vert f\right\Vert _{L^{6}(\mathbb{R}^{2})}\lesssim \left\Vert f\right\Vert
_{H^{\frac{2}{3}}(\mathbb{R}^{2})}.
\end{equation*}
We remark that it is because of this Sobolev estimate that we take $s = 2/3$
in $H^{s}$ rather than a smaller $s$.
\end{proof}

With Theorems \ref{Thm:estimate for NP} and \ref{Thm:estimate for IP}, we
then infer from \eqref{eqn:duhamel before board game} that%
\begin{align*}
\Tr\left\vert \gamma ^{(1)}(t_{1})\right\vert &\leq \Tr\left\vert \mathrm{NP}%
^{(l_{c})}\left( t_{1}\right) \right\vert +\Tr\left\vert \mathrm{IP}%
^{(l_{c})}\left( t_{1}\right) \right\vert \\
&\leq Ct_{1}\sup_{t\in \left[ 0,t_{1}\right] }\Tr\left\vert \gamma
^{(1)}(t)\right\vert +\left( Ct_{1}^{\frac{1}{3}}\right) ^{l_{c}} \\
&\leq CT\sup_{t\in \left[ 0,T\right] }\Tr\left\vert \gamma
^{(1)}(t)\right\vert +\left( CT^{\frac{1}{3}}\right) ^{l_{c}}
\end{align*}%
for all $t_{1}\in \lbrack 0,T].$ Take the supremum in time on both sides to
get 
\begin{equation*}
\sup_{t\in \left[ 0,T\right] }\Tr\left\vert \gamma ^{(1)}(t)\right\vert \leq
CT\sup_{t\in \left[ 0,T\right] }\Tr\left\vert \gamma ^{(1)}(t)\right\vert
+\left( CT^{\frac{1}{3}}\right) ^{l_{c}}.
\end{equation*}%
Therefore, for all $T$ small enough, we obtain%
\begin{equation*}
\frac{1}{2}\sup_{t\in \left[ 0,T\right] }\Tr\left\vert \gamma
^{(1)}(t)\right\vert \leq \left( CT^{\frac{1}{3}}\right) ^{l_{c}}\rightarrow
0\text{ as }l_{c}\rightarrow \infty,
\end{equation*}%
i.e.,%
\begin{equation*}
\sup_{t\in \left[ 0,T\right] }\Tr\left\vert \gamma ^{(1)}(t)\right\vert =0.
\end{equation*}
Hence we have finished the proof of the main theorem assuming Theorem \ref%
{Thm:estimate for NP}. The bulk of the rest of the paper is devoted to
proving Theorem \ref{Thm:estimate for NP}.

%%%%%%%%%%%%%%%%%%%%%%%%%%%%%%%%%%%%%%%%%%%%%%%%%%%%%%%%%%%%%%%%%%%%%%%%%%%%%%%%%%%%%%%%%%%%%%%%%%%%%%%%%%%%%%%%%%%%%%%%%%%%%%%%%%%%%%%%%%%%%%%%%%%%%%%%%%%%%%%%%%%%%%%%%%%%%%%%%%%%%%%%%%%%%%%%%%%%%%%%%%%%%%%%%%%%%%%%%%%%%%%%%%%%%%%%%%%%%%%%

\section{Estimate for the Nonlinear Part\label{Sec:main nl estimate}}

Recall 
\begin{align*}
\mathrm{NP}^{(l_{c})} &=G^{(1)}+\sum_{r=1}^{l_{c}}\int_{0}^{t_{1}} \cdots
\int_{0}^{t_{r}}dt_{2} \cdots dt_{r+1}U^{(1)}(t_{1}-t_{2})B_{2}\cdots
U^{(r)}(t_{r}-t_{r+1})B_{r+1}G^{(r+1)}(t_{r+1}) \\
&=: \mathrm{I} + \mathrm{II},
\end{align*}
where 
\begin{equation}  \label{Gk}
G^{(k)}(t_{k})=\int_{0}^{t_{k}}dt_{k+1}U^{(k)}(t_{k}-t_{k+1}) \left( 
\mathcal{A}_{1}^{(k)}\gamma ^{(k)}(t_{k+1})+\mathcal{A}^{(k)}
\gamma_{2}^{(k)}(t_{k+1})\right) .
\end{equation}
We will first treat $\Tr\left\vert G^{(1)}(t_{1})\right\vert $ coming from
part $\mathrm{I}$ and then, with some additional tools, the corresponding
term coming from part $\mathrm{II}$. Both of the estimates rely upon the
quantum de Finetti theorem stated below.

\begin{thm}[Quantum de Finetti theorem \protect\cite%
{HudsonMoody,Stormer-69,AmNi08,AmNi11,LeNaRo14}]
%\footnote{For Fubini we can refer to \cite[p.~190]{DuSc88}.} 
\label{Thm:de finetti} Let ${\mathcal{H}}$ be a separable Hilbert space and
let ${\mathcal{H}}^{k}=\bigotimes_{sym}^{k}{\mathcal{H}}$ denote the
corresponding bosonic $k$-particle space. Let $\Gamma $ denote a collection
of bosonic density matrices on ${\mathcal{H}}$, i.e., 
\begin{equation*}
\Gamma =(\gamma ^{(1)},\gamma ^{(2)},\dots )
\end{equation*}%
with $\gamma ^{(k)}$ a non-negative trace class operator on ${\mathcal{H}}%
^{k}$. If $\Gamma $ is admissible, i.e., 
for all $k \in \N$ we have
$\Tr \gamma^{(k)} = 1$ and
$\gamma ^{(k)}=\mathrm{Tr}%
_{k+1}\gamma ^{(k+1)}$, where $\mathrm{Tr}_{k+1}$ denotes the partial trace
over the $(k+1)$-th factor, then there exists a
unique Borel probability measure $\mu $, supported on the unit sphere in ${%
\mathcal{H}}$, and invariant under multiplication of $\phi \in {\mathcal{H}}$
by complex numbers of modulus one, such that 
\begin{equation*}
\gamma ^{(k)}=\int d\mu (\phi )(|\phi \rangle \langle \phi |)^{\otimes
k},\quad \forall k\in {\mathbb{N}}\,.
\end{equation*}
\end{thm}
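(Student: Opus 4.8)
The statement is the \emph{strong} quantum de Finetti theorem and is not original to this paper: it goes back to St\o rmer and to Hudson--Moody, with the later proofs of Ammari--Nier and of Lewin--Nam--Rougerie among the references cited above, so in the body of the paper one simply invokes it as a black box. Were one to prove it, the plan would be to reduce the assertion to the classical Hewitt--Savage theorem on exchangeable probability measures. Two routes are available, and I would present whichever reads more cleanly.

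\emph{Operator-algebraic route.} Admissibility --- the normalizations $\Tr\gamma^{(k)}=1$ together with the consistency relations $\gamma^{(k)}=\Tr_{k+1}\gamma^{(k+1)}$ --- is exactly what is needed for $\Gamma$ to extend to a state $\omega$ on the dense subalgebra of finitely supported elementary tensors inside the infinite minimal $C^{\ast}$-tensor product $\mathcal{B}=\bigotimes_{j\in\mathbb{N}}\mathcal{A}$, where $\mathcal{A}$ is the unitization of $\mathcal{K}(\mathcal{H})$, and hence to a state on $\mathcal{B}$. The symmetry \eqref{condition:symmetric} says precisely that $\omega$ is invariant under the natural action of the group of finite permutations of $\mathbb{N}$. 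The symmetric states form a weak-$\ast$ compact convex set, and the crux of the proof is the identification of its extreme points: they are exactly the infinite product states $\omega_\phi$ associated to unit vectors $\phi\in\mathcal{H}$, i.e. the states sending an elementary tensor $a_1\otimes\cdots\otimes a_n$ (placed in the first $n$ slots) to $\prod_{j=1}^{n}\langle\phi,a_j\phi\rangle$. Granting this, the symmetric states form a Bauer simplex, so Choquet's theorem produces a \emph{unique} probability measure $\mu$ on the extreme boundary with $\omega=\int\omega_\phi\,d\mu(\phi)$; restricting $\omega$ to the first $k$ tensor slots yields $\gamma^{(k)}=\int(|\phi\rangle\langle\phi|)^{\otimes k}\,d\mu(\phi)$. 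Because $\omega_\phi$ depends on $\phi$ only through $|\phi\rangle\langle\phi|$, the measure $\mu$ is automatically invariant under $\phi\mapsto e^{i\theta}\phi$ and may be regarded as supported on the unit sphere of $\mathcal{H}$.

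\emph{Coherent-state route}, which is closer in spirit to the analytic tools used elsewhere in this paper. The first step is to truncate to the finite-dimensional subspace $\mathcal{H}_d=P_d\mathcal{H}$ spanned by the first $d$ vectors of a fixed orthonormal basis; the compressions $P_d^{\otimes k}\gamma^{(k)}P_d^{\otimes k}$ still form an exchangeable admissible family after renormalization. On the unit sphere of $\mathcal{H}_d$ one has the overcomplete system of coherent states $|z\rangle$, and the Husimi (lower-symbol) measures $d\mu_d^{(k)}(z)\propto\langle z^{\otimes k},\gamma^{(k)}z^{\otimes k}\rangle\,dz$ are exchangeable, so the classical Hewitt--Savage theorem applies to them; the Berezin--Lieb inequalities bound the difference between $P_d^{\otimes k}\gamma^{(k)}P_d^{\otimes k}$ and $\int(|z\rangle\langle z|)^{\otimes k}\,d\mu_d^{(k)}(z)$ by $O(k^2/d)$. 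Letting $d\to\infty$ and using the weak-$\ast$ compactness of probability measures on the weakly compact unit ball of $\mathcal{H}$ together with a diagonal extraction over $k$, one obtains a single limiting measure $\mu$ whose $k$-th moments are the $\gamma^{(k)}$, the Berezin--Lieb error having vanished in the limit.

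In either formulation the main obstacle is the same finite-to-infinite passage: proving that the only extreme symmetric states are the product states, or, equivalently, extracting the de Finetti measure in the limit with no loss of mass off the unit sphere (a tightness point). The remaining ingredients --- well-definedness of $\omega$, the Choquet decomposition, the $U(1)$-bookkeeping, and the partial trace --- are routine once that step is secured. For the weak version one would want in a future derivation from large but finite systems, one replaces exact extendibility by approximate extendibility and keeps the estimates quantitative throughout.
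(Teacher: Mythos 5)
The paper does not prove this theorem: it is invoked as an external result, with the citations to Hudson--Moody, St\o rmer, Ammari--Nier, and Lewin--Nam--Rougerie doing the work. You correctly recognized this, and your two sketches --- the operator-algebraic route via Choquet decomposition of symmetric states on the infinite tensor product $C^{\ast}$-algebra, and the coherent-state route via finite-dimensional truncation, Husimi measures, Hewitt--Savage, and Berezin--Lieb --- are faithful summaries of the standard proofs in the cited literature. Since there is no in-paper proof to compare against, the only substantive check is whether your account of the external argument is accurate, and it is: the admissibility hypotheses (normalization and consistency of partial traces) are exactly what allows extension to a permutation-invariant state, the identification of extreme points with product states is indeed the crux, and the $U(1)$-invariance and support on the unit sphere fall out as you describe. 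Your closing remark about the weak (approximate) de Finetti theorem for finite systems also matches the authors' own comment immediately preceding Theorem~\ref{thm:main} about anticipated future derivations.
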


\begin{rem}
The $\mu $ determined by Theorem \ref{Thm:de finetti} is finite and so, in
particular, $\sigma$-finite. Therefore, the Fubini-Tonelli theorem, which is
crucial in the proof, applies. See \cite[p.~190]{DuSc88}.
\end{rem}

Using Theorem \ref{Thm:de finetti}, we write 
\begin{equation*}
\gamma _{j}^{(k)}(t)=\int d\mu _{t}^{(j)}(\phi )(|\phi \rangle \langle \phi
|)^{\otimes k},\quad j=1,2,
\end{equation*}%
and 
\begin{equation*}
\gamma ^{(k)}(t)=\int d\mu _{t}(\phi )(|\phi \rangle \langle \phi
|)^{\otimes k}
\end{equation*}%
where $\mu _{t}:=\mu _{t}^{(1)}-\mu _{t}^{(2)}$ is a signed measure
supported on the unit sphere of $L^{2}(\mathbb{R}^2)$. We remark that%
\begin{equation*}
\Tr\left\vert \gamma ^{(1)}(t)\right\vert =\int d\left\vert \mu
_{t}\right\vert (\phi )\left\Vert \phi \right\Vert _{L^{2}}^{2}=\int
d\left\vert \mu _{t}\right\vert (\phi )
\end{equation*}%
while%
\begin{equation*}
\Tr\left\vert \gamma _{j}^{(1)}(t)\right\vert =\int d\mu
_{t}^{(j)}\left\Vert \phi \right\Vert _{L^{2}}^{2}=\int d\mu _{t}^{(j)}=1.
\end{equation*}%
Here $\left\vert \mu _{t}\right\vert $ is defined, in the usual way, as the
sum of the positive part and the negative part of $\mu _{t}$, which itself
is another finite measure since $\left\vert \mu _{t}\right\vert \leq \mu
_{t}^{(1)}+\mu _{t}^{(2)}$. Write $\mu _{t}^{(0)}=\mu _{t}$ for convenience.
The main properties of $\mu _{t}^{(i)}$ that we need are%
\begin{equation}
\sup_{t\in \left[ 0,T\right] }\int d\left\vert \mu _{t}^{(i)}\right\vert
(\phi )\Vert \phi \Vert _{H_{x}^{\frac23}}^{2k}\leq M^{2k} \quad \text{for}
\quad i=0, 1, 2,  \label{measure energy:integral}
\end{equation}
and 
\begin{equation}  \label{measure energy:support}
\left\vert \mu _{t}^{(i)}\right\vert \left( \left\{ \phi \in L^{2}(\mathbb{R}%
^{2})|\left\Vert \phi \right\Vert _{H^{\frac{2}{3}}}>M\right\} \right)
=0\quad \text{for} \quad i=0,1,2,
\end{equation}
where $\left\vert \mu _{t}^{(i)}\right\vert $ is of course $\mu _{t}^{(i)}$
if $i=1$ or $2$. For $i=1$, $2$, estimate \eqref{measure energy:integral} is
equivalent to the energy condition%
\begin{equation}
\sup_{t\in \left[ 0,T\right] }\Tr\left( \prod_{j=1}^{k}\left\langle \nabla
_{x_{j}}\right\rangle ^{\frac{2}{3}}\right) \gamma _{i}^{(k)}(t)\left(
\prod_{j=1}^{k}\left\langle \nabla _{x_{j}^{\prime }}\right\rangle ^{\frac{2%
}{3}}\right) \leq M^{2k}\text{ for }i=1,2,  \label{energy condition}
\end{equation}%
and \eqref{measure energy:support} then follows from 
\eqref{measure
energy:integral} using Chebyshev's inequality.\footnote{%
See \cite[Lemma 4.4]{ChHaPaSe13} or \cite[(2.17)]{HoTaXi14}.} The $i=0$ case
then follows from the definition.

Putting these structures into $\mathcal{A}$, for $\ell =1,2$, we have%
\begin{equation} \label{newlab1}
\mathcal{A}_{\ell }^{(k)}f(t)=\iint d\mu _{t}^{(\ell )}(\psi )d\mu
_{t}^{(\ell )}(\omega ) \sum_{j=1}^{k}\left[ a_{|\psi|^2,
|\omega|^2}(r_{j})- a_{|\psi|^2, |\omega|^2}(r_{j}^{\prime })\right] f
\end{equation}%
and%
\begin{equation} \label{newlab2}
\begin{split}
\mathcal{A}^{(k)}f(t) =&\; \left( \mathcal{A}_{1}^{(k)}-\mathcal{A}%
_{2}^{(k)}\right) f \\
=&\; \iint d\mu _{t}^{(1)}(\psi )d\mu _{t}(\omega )\sum_{j=1}^{k}\left[
a_{|\psi|^2, |\omega|^2}(r_{j})-a_{|\psi|^2, |\omega|^2}(r_{j}^{\prime })%
\right] f \\
&+\iint d\mu _{t}(\psi )d\mu _{t}^{(2)}(\omega )\sum_{j=1}^{k}\left[
a_{|\psi|^2, |\omega|^2}(r_{j})-a_{|\psi|^2, |\omega|^2}(r_{j}^{\prime })%
\right] f
\end{split}
\end{equation}
where $a_{|\psi|^2, |\omega|^2}$ is defined by 
\begin{equation*}  %\label{abilindef}
a_{|\psi|^2, |\omega|^2}(t, r) := A_0^{(|\psi|^2, |\omega|^2)}(t, r) + \frac{1}{r^2}
A_\theta^{(|\psi|^2)}(t, r) A_\theta^{(|\omega|^2)}(t, r)
\end{equation*}
with
\[
A_0^{(|\psi|^2, |\omega|^2)}(t, r) = - \int_r^\infty A_\theta^{(|\psi|^2)}(t, s) |\omega|^2(t, s) \frac{ds}{s}, \quad
A_\theta^{(\rho)}(t, r) = - \frac12 \int_0^r \rho(t, s) s ds
\]
Informally speaking, $a_{|\psi|^2, |\omega|^2}(r)$ is similar to $a(r)$
defined in \eqref{a-def}, but is linear with respect to $|\psi|^2$ and $%
|\omega|^2$ independently rather than quadratic with respect to a single $%
|\phi|^2$. 

This notation enables us to represent the core term of $G^{(k)}$ by 
\begin{equation}
\begin{split}
\mathcal{A}_{1}^{(k)}& \gamma ^{(k)}(t)+\mathcal{A}^{(k)}\gamma _{2}^{(k)}(t)
\\
& =\sum_{j=1}^{k}\sum_{\left( l,m,n\right) \in \mathcal{P}}\iiint d\mu
_{t}^{(l)}(\psi )d\mu _{t}^{(m)}(\omega )d\mu _{t}^{(n)}(\phi )\left[
a_{|\psi|^2, |\omega|^2}(r_{j})-a_{|\psi|^2, |\omega|^2}(r_{j}^{\prime })%
\right] (|\phi \rangle \langle \phi |)^{\otimes k}
\end{split}
\label{eqn:core of G(k)}
\end{equation}%
if we take $\mathcal{P}=\{(1,1,0),(2,2,0),(1,0,2),(0,2,2)\}$. 
The set $\mathcal{P}$ is for bookkeeping, incorporating the terms from
\eqref{newlab1} and \eqref{newlab2}, and we remind the readers that
$d \mu_t^{(0)} := d\mu_t$.
We remark
that, to reach \eqref{eqn:core of G(k)}, we used the quantum de Finetti
theorem (i.e., Theorem \ref{Thm:de finetti}) four times: twice for the $%
\gamma ^{(k)}$ term (once for $\gamma_1$ and once for $\gamma_2$) and twice
for the terms in the self-generated potential $\mathcal{A}$ (they are
quadratic in $\rho $).

\subsection{Estimate of $\Tr \left\vert G^{(1)}(t_{1})\right\vert $}

Putting $k=2$ in \eqref{eqn:core of G(k)}, and replacing $\psi, \omega, \phi$
with $\phi_1, \phi_2, \phi_3$, respectively, we have%
\begin{align*}
\Tr \left\vert G^{(1)}(t_{1})\right\vert =& \Tr\left\vert
\int_{0}^{t_{1}}dt_{2}U^{(1)}(t_{1}-t_{2})\left( \mathcal{A}_{1}^{(1)}\gamma
^{(1)}(t_{2})+\mathcal{A}^{(1)}\gamma _{2}^{(1)}(t_{2})\right) \right\vert \\
\leq & \sum_{\left( l,m,n\right) \in \mathcal{P}}\int_{0}^{t_{1}}dt_{2}%
\iiint d\left\vert \mu _{t_{2}}^{(l)}\right\vert (\phi_1 )d\left\vert \mu
_{t_{2}}^{(m)}\right\vert (\phi_2 )d\left\vert \mu _{t_{2}}^{(n)}\right\vert
(\phi_3) \\
&\times \Tr\left\vert U^{(1)}(t_{1}-t_{2})\left[ a_{|\phi_1|^2,
|\phi_2|^2}(r_{1})- a_{|\phi_1|^2, |\phi_2|^2}(r_{1}^{\prime })\right]
\phi_3 \left( r_{1}\right) \bar{\phi}_3\left( r_{1}^{\prime }\right)
\right\vert .
\end{align*}%
Using the fact that 
\begin{align*}
\Tr\left\vert U^{(1)}(t)f\left( r_{1}\right) g\left( r_{1}^{\prime }\right)
\right\vert &=\int \left\vert e^{it\Delta }f(r_{1})e^{-it\Delta
}g(r_{1})\right\vert dx_{1} \\
&\leq \left\Vert e^{it\Delta }f\right\Vert _{L_{x}^{2}}\left\Vert
e^{it\Delta }g\right\Vert _{L_{x}^{2}} \\
&= \left\Vert f\right\Vert _{L_{x}^{2}}\left\Vert g\right\Vert _{L_{x}^{2}},
\end{align*}
we have 
\begin{equation*}
\Tr\left\vert G^{(1)}(t_{1})\right\vert \leq \sum_{\left( l,m,n\right) \in 
\mathcal{P}}\int_{0}^{t_{1}}dt_{2}\iiint d\left\vert \mu
_{t_{2}}^{(l)}\right\vert (\phi_1 )d\left\vert \mu _{t_{2}}^{(m)}\right\vert
(\phi_2)d\left\vert \mu _{t_{2}}^{(n)}\right\vert (\phi_3 )\left\Vert
a_{|\phi_1|^2, |\phi_2|^2}\phi_3 \right\Vert _{L_{x}^{2}}\left\Vert \phi_3
\right\Vert _{L_{x}^{2}}.
\end{equation*}
Corollary \ref{Corollary:Key2}, i.e., the main nonlinear estimate, turns the
above into%
\begin{align*}
\Tr\left\vert G^{(1)}(t_{1})\right\vert \leq &\sum_{\left( l,m,n\right) \in 
\mathcal{P}}\int_{0}^{t_{1}}dt_{2}\iiint d\left\vert \mu
_{t_{2}}^{(l)}\right\vert (\phi_1 )d\left\vert \mu _{t_{2}}^{(m)}\right\vert
(\phi_2 )d\left\vert \mu _{t_{2}}^{(n)}\right\vert (\phi_3 ) \times
\left\Vert \phi_3 \right\Vert _{L_{x}^{2}} \times \\
&\times \| \phi_1 \|_{\dot{H}^{\frac12}_x} \| \phi_2 \|_{\dot{H}%
^{\frac12}_x} \min_{\tau \in S_3} \| \phi_{\tau(1)} \|_{\dot{H}^{\frac12}_x}
\| \phi_{\tau(2)} \|_{\dot{H}^{\frac12}_x} \| \phi_{\tau(3)} \|_{L^2_x}
\end{align*}%
One of $l,m,n$ is zero, and we may put the corresponding term in $L^{2}$,
i.e., 
\begin{align*}
\Tr\left\vert G^{(1)}(t_{1})\right\vert \leq
&\sum_{l=1}^{2}\int_{0}^{t_{1}}dt_{2}\iiint d\mu _{t_{2}}^{(l)}(\phi_1 )d\mu
_{t_{2}}^{(l)}(\phi_2 )d\left\vert \mu _{t_{2}}^{(0)}\right\vert (\phi_3
)\Vert \phi_1 \Vert _{\dot{H}^{\frac12}_x}^{2}\Vert \phi_2 \Vert _{\dot{H}%
^{\frac12}_x}^{2}\left\Vert \phi_3 \right\Vert _{L_{x}^{2}}^{2} \\
& +\int_{0}^{t_{1}}dt_{2}\iiint d\mu _{t_{2}}^{(1)}(\phi_1 )d\left\vert \mu
_{t_{2}}\right\vert (\phi_2 )d\mu _{t_{2}}^{(2)}(\phi_3 )\Vert \phi_1 \Vert
_{\dot{H}^{\frac12}_x}^{2}\Vert \phi_2 \Vert _{L_{x}^{2}} \lVert \phi_2
\rVert_{\dot{H}^{\frac12}_x} \Vert \phi_3 \Vert _{\dot{H}^{\frac12}_x}
\lVert \phi_3 \rVert_{L^2_x} \\
&+\int_{0}^{t_{1}}dt_{2}\iiint d\left\vert \mu _{t_{2}}\right\vert (\phi_1
)d\mu _{t_{2}}^{(2)}(\phi_2 )d\mu _{t_{2}}^{(2)}(\phi_3 ) \Vert \phi_1
\Vert_{L_{x}^{2}} \lVert \phi_1 \rVert_{\dot{H}^{\frac12}_x} \Vert \phi_2
\Vert _{\dot{H}^{\frac12}_x}^{2}\Vert \phi_3 \Vert _{\dot{H}^{\frac12}_x}
\lVert \phi_3 \rVert_{L^2_x}.
\end{align*}
Using the fact that each $\mu_t^{(j)}$ is supported on the unit sphere in $%
L^2$ and thanks to \eqref{measure energy:integral} and 
\eqref{measure
energy:support}, we obtain 
\begin{align*}
\Tr \left\vert G^{(1)}(t_{1})\right\vert &\leq (2 M^2 + 2 M^3) t_1 \sup_{t
\in [0, t_1]} \int d | \mu_{t} |(\phi) \\
&\leq C (M^2 + M^3) t_1\left( \sup_{t\in \left[ 0, t_{1}\right] }\Tr %
\left\vert \gamma^{(1)}(t) \right\vert \right)
\end{align*}
Thus we have proved that 
\begin{equation}
\Tr\left\vert G^{(1)}(t_{1})\right\vert \leq Ct_{1}\left( \sup_{t\in \left[
0,t_{1}\right] }\Tr\left\vert \gamma ^{(1)}(t)\right\vert \right) .
\label{eqn: end of part I}
\end{equation}

\subsection{Estimate for Part $\mathrm{II}$}

Recall that 
\begin{equation*}
\mathrm{II} =\sum_{r=1}^{l_{c}}\int_{0}^{t_{1}}...%
\int_{0}^{t_{r}}dt_{2}...dt_{r+1}U^{(1)}(t_{1}-t_{2})B_{2}...U^{(r)}(t_{r}-t_{r+1})B_{r+1}G^{(r+1)}(t_{r+1}).
\end{equation*}
Because each $B_{j}$ is a sum of $2\left( j-1\right) $ terms (see %
\eqref{Bshort}), integrands of summands of $\mathrm{NP}^{(l_{c})}$ and $%
\mathrm{IP}^{(l_{c})}$ have up to $O(k!)$ summands themselves. We use the
Klainerman-Machedon board game argument to combine them and hence reduce the
number of terms that need to be treated. Define 
\begin{equation*}
J(\underline{t}_{j+1})(f^{(j+1)})=U^{(1)}(t_{1}-t_{2})B_{2}\cdots
U^{(j)}(t_{j}-t_{j+1})B_{j+1}f^{(j+1)},
\end{equation*}
where $\underline{t}_{j+1}$ means $\left( t_{2},\ldots ,t_{j+1}\right)$.
Then the Klainerman-Machedon board game argument implies the lemma.

\begin{lem}[Klainerman-Machedon board game]
\cite{KlMa08}\label{lem:KM}One can express 
\begin{equation*}
\int_{0}^{t_{1}}\cdots \int_{0}^{t_{j}}J(\underline{t}_{j+1})(f^{(j+1)})d%
\underline{t}_{j+1}
\end{equation*}%
as a sum of at most $4^{j}$ terms of the form 
\begin{equation*}
\int_{D}J(\underline{t}_{j+1},\sigma)(f^{(j+1)})d\underline{t}_{j+1},
\end{equation*}%
or in other words, 
\begin{equation*}
\int_{0}^{t_{1}}\cdots \int_{0}^{t_{j}}J(\underline{t}_{j+1})(f^{(j+1)})d%
\underline{t}_{j+1}=\sum_{\sigma}\int_{D}J(\underline{t}_{j+1},%
\sigma)(f^{(j+1)})d\underline{t}_{j+1}.
\end{equation*}%
Here $D\subset \lbrack 0,t_{2}]^{j}$, $\sigma $ belong to the set of maps
from $\{2,\ldots ,j+1\}$ to $\{1,\ldots ,j\}$ satisfying $\sigma (2)=1$ and $%
\sigma(l)<l$ for all $l,$ and 
\begin{align*}
J(\underline{t}_{j+1},\sigma )(f^{(j+1)})=&
\;U^{(1)}(t_{1}-t_{2})B_{1,2}U^{(2)}(t_{2}-t_{3})B_{\sigma (3),3}\cdots \\
& \cdots U^{(j)}(t_{j}-t_{j+1})B_{\sigma (j+1),j+1}(f^{(j+1)}).
\end{align*}
\end{lem}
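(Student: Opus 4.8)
The statement is precisely the Klainerman--Machedon board game of \cite{KlMa08}, and the plan is to reproduce that argument; it is purely combinatorial and sees neither the detailed structure of the operators $U^{(k)}$ and $B_{j,k+1}$ nor the nonlinearity of our hierarchy (the self-generated potential sits inside $f^{(j+1)}$, which is treated here as an arbitrary kernel). The first step is to expand each collision operator. Writing $B_{k+1}=\sum_{\mu(k+1)=1}^{k}B_{\mu(k+1),k+1}$, and absorbing the distinction between the two halves $\delta(x_{\mu(k+1)}-x_{k+1})\,\cdot$ and $\cdot\,\delta(x_{\mu(k+1)}-x_{k+1})$ of each commutator into an overall factor $2^{j}$, one inserts this into $J(\underline{t}_{j+1})$ and obtains
\begin{equation*}
\int_{0}^{t_{1}}\cdots\int_{0}^{t_{j}}J(\underline{t}_{j+1})(f^{(j+1)})\,d\underline{t}_{j+1}=\sum_{\mu}\int_{t_{1}\ge t_{2}\ge\cdots\ge t_{j+1}\ge 0}J(\underline{t}_{j+1},\mu)(f^{(j+1)})\,d\underline{t}_{j+1},
\end{equation*}
the sum running over all maps $\mu:\{2,\ldots,j+1\}\to\{1,\ldots,j\}$ with $\mu(2)=1$ and $\mu(l)<l$, of which there are $j!$.

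The second step is to introduce on this set of maps the equivalence relation generated by the \emph{acceptable moves} of \cite{KlMa08}: when two consecutive collisions decouple — concretely, when particle $k+1$ does not collide with particle $k$, so that the operator block is unchanged upon exchanging the order of the $t_{k}$ and $t_{k+1}$ integrations once the integration region is adjusted — we may swap those two time variables. Iterating these swaps brings every $\mu$ into the form of one of a distinguished family of maps $\sigma$ (the maps with $\sigma(2)=1$ and $\sigma(l)<l$ that survive the game), at the cost of replacing the full simplex by the sub-simplex cut out by the reordered inequalities. The substance of the argument is that for a fixed $\sigma$ the sub-simplices arising from the $\mu$'s equivalent to it have pairwise disjoint interiors and tile, up to a null set, a single region $D\subset[0,t_{2}]^{j}$, on each piece of which the integrand equals $J(\underline{t}_{j+1},\sigma)(f^{(j+1)})$ after relabeling the dummy variables; summing over the class therefore collapses to $\int_{D}J(\underline{t}_{j+1},\sigma)(f^{(j+1)})\,d\underline{t}_{j+1}$. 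Finally, the counting lemma of \cite{KlMa08} bounds the number of equivalence classes, hence the number of $\sigma$'s, by $4^{j}$ (the $2^{j}$ from the commutator halves being already absorbed), which gives the claimed representation.

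\textbf{Main obstacle.} The crux is the combinatorial bookkeeping of the second step: one must verify that the acceptable moves are compatible with the operator identities, so that the integrand is genuinely preserved; that for each class the associated sub-simplices fit together with no overlaps and no gaps into the region $D$; and that the number of classes never exceeds $4^{j}$. Since none of this uses the particular form of $B_{j,k+1}$ beyond the decoupling of non-adjacent collisions, one may invoke \cite{KlMa08} directly.
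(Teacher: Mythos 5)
The paper offers no proof of this lemma at all; it is stated with a bare citation to \cite{KlMa08}, so there is no paper argument to compare against. Your proposal is a faithful, if compressed, account of the board-game argument from that reference: expand $B_{k+1}=\sum_{\mu}B_{\mu(k+1),k+1}$ to obtain a sum of $j!$ admissible $\mu$'s over the full time-ordered simplex, declare two $\mu$'s equivalent when they differ by an acceptable move (a transposition of consecutive collisions that decouple), observe that each class collapses to a single integrand over a region $D$ tiled, up to a null set, by the reordered sub-simplices, and bound the number of classes by a Catalan number $\leq 4^{j}$. Your closing observation that none of this uses the detailed structure of $B_{j,k+1}$ beyond the decoupling of non-adjacent collisions, so that \cite{KlMa08} can be invoked verbatim even though the present hierarchy is nonlinear, is precisely the paper's tacit stance.

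One small thing to correct: the lemma as stated keeps the full commutator operators $B_{\sigma(l),l}$ and does not decompose them into the halves $B^{\pm}_{\sigma(l),l}$, so there is no factor of $2^{j}$ to absorb here at all. The $4^{j}$ in the statement is purely the Catalan-number bound on the number of upper-echelon forms $\sigma$; it is not $2^{j}$ (for the two halves of each commutator) multiplied by a smaller combinatorial count. The $B^{\pm}$ split is introduced only afterward, inside the proof of Lemma \ref{lem:individual summand}, where the resulting extra factor $2^{r}$ is then compensated by the powers of $t_{1}^{1/3}$ that emerge from the iterative trilinear estimates, as the paper remarks after \eqref{e:example e2}.
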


With Lemma \ref{lem:KM}, we can write a typical summand of part $\mathrm{II}$
as 
\begin{align*}
& \int_{0}^{t_{1}}\cdots \int_{0}^{t_{r}}dt_{2}\cdots
dt_{r+1}U^{(1)}(t_{1}-t_{2})B_{2}\cdots
U^{(r)}(t_{r}-t_{r+1})B_{r+1}G^{(r+1)}(t_{r+1}) \\
& =\sum_{\sigma }\int_{D}d\underline{t}_{r+1}J\left( \underline{t}%
_{r+1},\sigma \right) \left( G^{(r+1)}\right) ,
\end{align*}%
where the sum has at most $4^{r}$ terms inside. Let 
\begin{equation}
\mathrm{II}^{(r,\sigma )}=\int_{D}d\underline{t}_{r+1}J\left( \underline{t}%
_{r+1},\sigma \right) \left( G^{(r+1)}\right)  \label{partIIexp}
\end{equation}%
To estimate part $\mathrm{II}$, it suffices to prove the following lemma.

\begin{lem}
\label{lem:individual summand}There is a $C$ depending on $M$ in %
\eqref{energy condition} such that for all $r$, we have 
\begin{equation*}
\Tr\left\vert \mathrm{II}^{(r,\sigma )}\right\vert (t_{1})\leq \left[ \left(
r+1\right) \left( C_{0}t_{1}^{\frac{1}{3}}\right) ^{r}\right] t_{1}\left(
\sup_{t\in \left[ 0,t_{1}\right] }\Tr\left\vert \gamma ^{(1)}(t)\right\vert
\right)
\end{equation*}
\end{lem}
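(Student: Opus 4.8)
The plan is to expand the core of $G^{(r+1)}$ using the quantum de Finetti representation \eqref{eqn:core of G(k)}, which rewrites $\mathrm{II}^{(r,\sigma )}$ as a finite sum of terms each having exactly the structure of an interaction-part summand as in $\mathrm{IP}$---a chain of $r$ collision operators $B$ and free propagators applied to a factorized state---except that one ket- or bra-slot of the otherwise factorized input is acted on by the self-generated potential $a_{|\psi|^{2},|\omega|^{2}}$. Each such term I then estimate by running the board-game/trilinear-collision argument behind Theorem \ref{Thm:estimate for IP}, using Corollary \ref{Corollary:Key2} (the main nonlinear estimate) to control that single modified slot.

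More precisely, recall from \eqref{partIIexp} that $\mathrm{II}^{(r,\sigma )}=\int_{D}d\underline{t}_{r+1}\,J(\underline{t}_{r+1},\sigma )(G^{(r+1)})$ with $G^{(r+1)}(t_{r+1})=\int_{0}^{t_{r+1}}dt_{r+2}\,U^{(r+1)}(t_{r+1}-t_{r+2})\big(\mathcal{A}_{1}^{(r+1)}\gamma ^{(r+1)}(t_{r+2})+\mathcal{A}^{(r+1)}\gamma _{2}^{(r+1)}(t_{r+2})\big)$. Substituting \eqref{eqn:core of G(k)} with $k=r+1$, moving the trace norm through every sum and integral, and splitting each commutator $\big[a_{|\psi|^{2},|\omega|^{2}}(r_{j}),(|\phi \rangle \langle \phi |)^{\otimes (r+1)}\big]$ into the difference of the two rank-one operators obtained by replacing, in the $j$-th tensor factor, the ket (respectively the bra) of $|\phi \rangle \langle \phi |$ by $a_{|\psi|^{2},|\omega|^{2}}\phi $, it suffices to bound, for each $j\in \{1,\dots ,r+1\}$, each $(l,m,n)\in \mathcal{P}$, and each of the two splittings, the quantity
\[
\int_{D}d\underline{t}_{r+1}\int_{0}^{t_{r+1}}dt_{r+2}\iiint d|\mu _{t_{r+2}}^{(l)}|(\psi )\,d|\mu _{t_{r+2}}^{(m)}|(\omega )\,d|\mu _{t_{r+2}}^{(n)}|(\phi )\;\Tr\Big|J(\underline{t}_{r+1},\sigma )\big(U^{(r+1)}(t_{r+1}-t_{r+2})\,\Xi _{j}\big)\Big|,
\]
where $\Xi _{j}$ is the tensor product of $r+1$ rank-one factors, all equal to $|\phi \rangle \langle \phi |$ except the $j$-th, whose ket or bra has been replaced by $a_{|\psi|^{2},|\omega|^{2}}\phi $.

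For the inner trace I would run the same combinatorial scheme used to prove Theorem \ref{Thm:estimate for IP}: after integrating $\underline{t}_{r+1}$ over $D\subset \lbrack 0,t_{1}]^{r}$, the $r$ collisions $B_{1,2},B_{\sigma (3),3},\dots ,B_{\sigma (r+1),r+1}$ are absorbed by iterated use of the trilinear estimates \eqref{TH} and \eqref{TL2}, producing a factor $(C_{0}t_{1}^{\frac13})^{r}$ and leaving a product of norms of the $2r+1$ unmodified $\phi $-slots---each $\leq M$ on the support of $|\mu _{t_{r+2}}^{(n)}|$ by \eqref{measure energy:support}---together with a single factor of $a_{|\psi|^{2},|\omega|^{2}}\phi $; the extra propagator $U^{(r+1)}(t_{r+1}-t_{r+2})$ commutes with the weights $\langle \nabla \rangle ^{\frac23}$ and is harmless. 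The crucial point, which I expect to be the main obstacle, is that, because both versions \eqref{TH}, \eqref{TL2} of the collision estimate are at our disposal, the scheme can be arranged so that the factor $a_{|\psi|^{2},|\omega|^{2}}\phi $ enters only through its $L_{x}^{2}$ norm and never through any higher norm---when $j=1$ this factor reaches the final step, where $\Tr|U^{(1)}(t)f(r_{1})g(r_{1}^{\prime })|\leq \Vert f\Vert _{L_{x}^{2}}\Vert g\Vert _{L_{x}^{2}}$ as in the estimate of $\Tr|G^{(1)}|$, while for $j\geq 2$ one selects, at each collision touching particle $j$, the version of the trilinear estimate that charges this factor only in $L_{x}^{2}$, all the remaining slots being ordinary $\phi $'s that tolerate $\dot{H}^{\frac23}$. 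Once this is in place, $\Vert a_{|\psi|^{2},|\omega|^{2}}\phi \Vert _{L_{x}^{2}}$ is bounded by Corollary \ref{Corollary:Key2}; since exactly one of $l,m,n$ vanishes, exactly one of $\psi ,\omega ,\phi $ is distributed according to the signed measure $\mu _{t}=\mu _{t}^{(1)}-\mu _{t}^{(2)}$ while the other two are distributed according to probability measures, and I would use the minimum over $S_{3}$ in that estimate to route the $\mu _{t}$-distributed wave function into the $L_{x}^{2}$ slot, so that the only norm carried by that wave function is $\Vert \cdot \Vert _{L_{x}^{2}}=1$.

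It then remains only to assemble the estimate. All the $\dot{H}^{\frac12}$ and $\dot{H}^{\frac23}$ norms that occur are $\leq M$ on the relevant supports by \eqref{measure energy:integral}--\eqref{measure energy:support} and are pulled out as constants, the two accompanying probability measures contributing only their total mass $1$; integrating the total variation of $\mu _{t_{r+2}}$ against the exposed $L_{x}^{2}$ factor gives $\int d|\mu _{t_{r+2}}|=\Tr|\gamma ^{(1)}(t_{r+2})|\leq \sup_{t\in \lbrack 0,t_{1}]}\Tr|\gamma ^{(1)}(t)|$; the integral $\int_{0}^{t_{r+1}}dt_{r+2}\leq t_{1}$ supplies the stray factor $t_{1}$; the finitely many $M$-powers incurred per collision are absorbed into $C_{0}$; and summing over the two splittings and the four members of $\mathcal{P}$ (both absorbed) and over the $r+1$ values of $j$ yields $\Tr|\mathrm{II}^{(r,\sigma )}|(t_{1})\leq \big[(r+1)(C_{0}t_{1}^{\frac13})^{r}\big]\,t_{1}\sup_{t\in \lbrack 0,t_{1}]}\Tr|\gamma ^{(1)}(t)|$, as claimed. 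The genuinely delicate part throughout is the combinatorial bookkeeping of the third paragraph: keeping the potential-modified factor at the $L_{x}^{2}$ level, so that only Corollary \ref{Corollary:Key2}---which controls $a_{|\psi|^{2},|\omega|^{2}}\phi $ only in $L_{x}^{2}$---is needed and no regularity beyond $L_{x}^{2}$ is ever demanded of the self-generated potential; this is where the multilinear machinery of \S\ref{sec:multi} enters.
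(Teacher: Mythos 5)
Your proposal is correct and follows essentially the same route as the paper: expand $G^{(r+1)}$ via the de Finetti representation \eqref{eqn:core of G(k)}, iterate the trilinear estimates \eqref{TH}--\eqref{TL2} through the $r$ collisions to produce the $(C_0 t_1^{1/3})^r$ factor, always route the potential-modified slot $a_{|\psi|^2,|\omega|^2}\phi$ to $L^2_x$ so that only Corollary \ref{Corollary:Key2} is required of it, and use the minimum over $S_3$ there to place the $\mu_t$-distributed function in the $L^2$ slot, yielding $\int d|\mu_{t_{r+2}}|=\Tr|\gamma^{(1)}(t_{r+2})|$ and the stray factor $t_1$ from $\int_0^{t_{r+1}} dt_{r+2}$. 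One small mislabel worth correcting: you invoke ``the same combinatorial scheme used to prove Theorem \ref{Thm:estimate for IP},'' but the paper emphasizes (see the remark after the proof of Lemma \ref{lem:individual summand}) that the nonlinear part is handled by a strictly simpler iterative mechanism, encapsulated in Lemma \ref{lem:tricol}---precisely because, as you actually describe, the extra propagator $U^{(r+1)}(t_{r+1}-t_{r+2})$ keeps every summand a genuine trilinear form \eqref{T}, so neither Sobolev embedding nor the binary-tree bookkeeping needed for $\mathrm{IP}$ ever enters; the content of your third paragraph is the $\mathrm{NP}$ scheme, not the $\mathrm{IP}$ one, and it is the right one.
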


With the above lemma, we have%
\begin{equation}  \label{eqn: end of part II}
\begin{split}
\Tr\left\vert \mathrm{II} \right\vert (t_{1}) &\leq
\sum_{r=1}^{l_{c}}\sum_{\sigma}\left[ \left( r+1\right) \left( C_{0}t_{1}^{%
\frac{1}{3}}\right) ^{r}\right] t_{1}\left( \sup_{t\in \left[ 0,t_{1}\right]
}\Tr\left\vert \gamma ^{(1)}(t)\right\vert \right) \\
&\leq t_{1}\left( \sup_{t\in \left[ 0,t_{1}\right] }\Tr\left\vert \gamma
^{(1)}(t)\right\vert \right) \sum_{r=1}^{\infty }4^{r}\left[ \left(
r+1\right) \left( C_{0}t_{1}^{\frac{1}{3}}\right) ^{r}\right] \\
&\leq Ct_{1}\left( \sup_{t\in \left[ 0,t_{1}\right] }\Tr\left\vert \gamma
^{(1)}(t)\right\vert \right) ,
\end{split}%
\end{equation}
for $t_{1}$ small enough so that the series converges.

Together the estimates \eqref{eqn: end of part I} and 
\eqref{eqn: end of
part II} establish Theorem \ref{Thm:estimate for NP}.

Before proving Lemma \ref{lem:individual summand}, we illustrate how to
obtain the estimate for a specific example.

\begin{examp}
\label{example:3B}To avoid heavy notation and demonstrate the main idea of
the proof of Lemma \ref{lem:individual summand}, we first prove it for a
concrete example. The general case uses the same underlying idea, which
turns out to be quite simple as compared to what must be done for the
interaction part $\mathrm{IP}$. We adapt the example and use the notation in 
\cite[\S 6.1]{ChHaPaSe13} for our $\mathrm{II}^{(r,\sigma )}$. Denoting
$U^{(j)}(t_{k}-t_{l})$ by $U_{k,l}^{(j)}$, we consider 
\begin{equation}
\begin{split}
\Tr\left\vert \mathrm{II}^{(3,\sigma )}\right\vert (t_{1})=& \int_{D}d%
\underline{t}%
_{4}U_{1,2}^{(1)}B_{1,2}U_{2,3}^{(2)}B_{2,3}U_{3,4}^{(3)}B_{3,4}[G^{(4)}(t_{4})]
\\
\leq & \sum_{j=1}^{4}\sum_{\left( l,m,n\right) \in \mathcal{P}%
}\int_{[0,t_{1}]^{3}}d\underline{t}_{4}\int_{0}^{t_{1}}dt_{5}\iiint
d\left\vert \mu _{t_{5}}^{(l)}\right\vert (\psi )d\left\vert \mu
_{t_{5}}^{(m)}\right\vert (\omega )d\left\vert \mu _{t_{5}}^{(n)}\right\vert
(\phi ) \\
& \Tr%
|U_{1,2}^{(1)}B_{1,2}U_{2,3}^{(2)}B_{2,3}U_{3,4}^{(3)}B_{3,4}U_{4,5}^{(4)}%
\left( \left[ a_{|\psi |^{2},|\omega |^{2}}(r_{j})-a_{|\psi |^{2},|\omega
|^{2}}(r_{j}^{\prime })\right] (|\phi \rangle \langle \phi |)^{\otimes
4}\right) |
\end{split}
\label{e:example e1}
\end{equation}

\begin{rem}
In the above, there is a $U_{4,5}^{(4)}$ after $B_{3,4}$. This is the main
difference between the nonlinear part $\mathrm{NP}$ and the interaction part 
$\mathrm{IP}$. As noted in \cite{ChHaPaSe13}, since the last $B$ in $\mathrm{%
IP}$ is not followed by a Schr\"{o}dinger propagator, it creates a factor of 
$\left\vert \phi \right\vert ^{2}\phi$, which has to be handled by Sobolev
embedding rather than Strichartz estimates.
\end{rem}

It suffices to treat%
\begin{equation}  \label{e:example e2}
\begin{split}
&\sum_{\left( l,m,n\right) \in \mathcal{P}}\int_{[0,t_{1}]^{3}}d\underline{t}%
_{4}\int_{0}^{t_{1}}dt_{5}\iiint d\left\vert \mu _{t_{5}}^{(l)}\right\vert
(\psi )d\left\vert \mu _{t_{5}}^{(m)}\right\vert (\omega )d\left\vert \mu
_{t_{5}}^{(n)}\right\vert (\phi ) \\
&\Tr%
|U_{1,2}^{(1)}B_{1,2}^{+}U_{2,3}^{(2)}B_{2,3}^{+}U_{3,4}^{(3)}B_{3,4}^{+}U_{4,5}^{(4)}\left( %
\left[ a_{|\psi |^{2},|\omega |^{2}}(r_{4})\right] (|\phi \rangle \langle
\phi |)^{\otimes 4}\right) |
\end{split}%
\end{equation}
where $B_{1,2}^{+}$ is half of $B_{1,2}$, namely 
\begin{equation*}
B_{1,2}^{+}(\gamma ^{(2)})=\gamma ^{(2)}\left( x_{1},x_{1},x_{1}^{\prime
},x_{1}\right) .
\end{equation*}%
When we plug the estimate of \eqref{e:example e2} into \eqref{e:example e1},
we will pick up a $2^{3}$ since there are three $B$'s in \eqref{e:example e1}%
. However, compensating for this is the factor $\left( t_{1}^{\frac{2}{3}%
}\right) ^{3}$ that emerges by the end. Hence our simplification is a valid
one.

Step I. (Structure) We enumerate the four factors of $(|\phi \rangle \langle
\phi |)^{\otimes 4}$ for the purpose of bookkeeping, even though these
factors are physically indistinguishable. So we write $\otimes
_{i=1}^{4}u_{i}$, ordered with increasing index $i$. We first have%
\begin{equation*}
B_{3,4}^{+}U_{4,5}^{(4)}a_{|\psi |^{2},|\omega |^{2}}(r_{4})(|\phi \rangle
\langle \phi |)^{\otimes 4}=\left( U_{4,5}^{(2)}\left( \otimes
_{i=1}^{2}u_{i}\right) \right) \otimes \Theta _{3},
\end{equation*}%
where%
\begin{align}
\Theta _{3}& =B_{1,2}^{+}(U_{4,5}^{(2)}(u_{3}\otimes a_{|\psi |^{2},|\omega
|^{2}}(r_{4})u_{4}))  \notag \\
& =B_{1,2}^{+}\left( U_{4,5}\phi (x_{3})\right) \left( U_{5,4}\bar{\phi}%
(x_{3}^{\prime })\right) \left( U_{4,5}\left[ a_{|\psi |^{2},|\omega
|^{2}}(r_{4})\phi (x_{4})\right] \right) \left( U_{5,4}\bar{\phi}%
(x_{4}^{\prime })\right)  \notag \\
& =\left( U_{4,5}\phi (x_{3})\right) \left( U_{4,5}\left[ a_{|\psi
|^{2},|\omega |^{2}}(r_{3})\phi (x_{3})\right] \right) \left( U_{5,4}\bar{%
\phi}(x_{3})\right) \left( U_{5,4}\bar{\phi}(x_{3}^{\prime })\right)  \notag
\\
& \equiv T_{3}(x_{3})\left( U_{5,4}\bar{\phi}(x_{3}^{\prime })\right)
\label{eqn:T3}
\end{align}%
with $U_{4,5}=e^{i\left( t_{4}-t_{5}\right) \Delta }$. 
Here $T_3$ stands for the trilinear form
\[
\left( U_{4,5}\left[ a_{|\psi
|^{2},|\omega |^{2}}(r_{3})\phi (x_{3})\right] \right) \left( U_{5,4}\bar{%
\phi}(x_{3})\right) \left( U_{5,4}\bar{\phi}(x_{3}^{\prime })\right)
\]
We make similar substitutions below and, to bound these terms, 
shall invoke the trilinear estimate \eqref{TL2}, which states that
\[
\| T(f_1, f_2, f_3) \|_{L^1_{t \in [0, t_0)} L^2_x} 
\lesssim_s t_0^{\frac{s}{2}} \| f_1 \|_{L^2} \| f_2 \|_{L^2} \| f_3 \|_{\dot{H}^s}
\]
for $0 < s \leq 2$.

Applying $%
B_{2,3}^{+}U_{3,4}^{(3)}$, we reach 
\begin{align*}
& B_{2,3}^{+}U_{3,4}^{(3)}B_{3,4}^{+}U_{4,5}^{(4)}\left( a_{\rho _{l},\rho
_{m}}(r_{j})(|\phi \rangle \langle \phi |)^{\otimes 4}\right) \\
& =B_{2,3}^{+}U_{3,4}^{(3)}\left( U_{4,5}^{(1)}u_{1}\otimes
U_{4,5}^{(1)}u_{2}\otimes \Theta _{3}\right) \\
& =U_{3,4}^{(1)}U_{4,5}^{(1)}u_{1}\otimes \Theta _{2} \\
& =U_{3,5}^{(1)}u_{1}\otimes \Theta _{2}
\end{align*}%
where 
\begin{align}
\Theta _{2}& =B_{1,2}^{+}U_{3,4}^{(2)}\left( U_{4,5}^{(1)}u_{2}\otimes
\Theta _{3}\right)  \notag \\
& =B_{1,2}^{+}\left( U_{3,5}^{(1)}u_{2}\otimes U_{3,4}^{(1)}\Theta
_{3}\right)  \notag \\
& =B_{1,2}^{+}\left( \left( U_{3,5}\phi (x_{2})\right) \left( U_{5,3}\bar{%
\phi}(x_{2}^{\prime })\right) \left( U_{3,4}T_{3}(x_{3})\right) \left(
U_{4,3}U_{5,4}\bar{\phi}(x_{3}^{\prime })\right) \right)  \notag \\
& =\left( U_{3,5}\phi (x_{2})\right) (U_{3,4}T_{3}(x_{2}))\left( U_{5,3}\bar{%
\phi}(x_{2})\right) \left( U_{5,3}\bar{\phi}(x_{2}^{\prime })\right)  \notag
\\
& \equiv T_{2}(x_{2})\left( U_{5,3}\bar{\phi}(x_{2}^{\prime })\right) .
\label{eqn:T2}
\end{align}%
Finally, with $U_{1,2}^{(1)}B_{1,2}^{+}U_{2,3}^{(2)}$, we get%
\begin{align}
&
U_{1,2}^{(1)}B_{1,2}^{+}U_{2,3}^{(2)}B_{2,3}^{+}U_{3,4}^{(3)}B_{3,4}^{+}U_{4,5}^{(4)}\left( a_{|\psi |^{2},|\omega |^{2}}(r_{j})(|\phi \rangle \langle \phi |)^{\otimes 4}\right)
\notag \\
& =U_{1,2}^{(1)}B_{1,2}^{+}U_{2,3}^{(2)}\left( U_{3,5}^{(1)}u_{1}\otimes
\Theta _{2}\right)  \notag \\
& =U_{1,2}^{(1)}B_{1,2}^{+}\left( U_{2,5}^{(1)}u_{1}\otimes
U_{2,3}^{(1)}\Theta _{2}\right)  \notag \\
& =U_{1,2}^{(1)}B_{1,2}^{+}\left[ \left( U_{2,5}\phi (x_{1})\right) \left(
U_{5,2}\bar{\phi}(x_{1}^{\prime })\right) \left( U_{2,3}T_{2}(x_{2})\right)
\left( U_{3,2}U_{5,3}\bar{\phi}(x_{2}^{\prime })\right) \right]  \notag \\
& =U_{1,2}^{(1)}\left[ \left( U_{2,5}\phi (x_{1})\right) \left(
U_{2,3}T_{2}(x_{1})\right) \left( U_{5,2}\bar{\phi}(x_{1})\right) \left(
U_{5,2}\bar{\phi}(x_{1}^{\prime })\right) \right]  \notag \\
& =U_{1,2}^{(1)}\left[ T_{1}(x_{1})U_{5,2}\bar{\phi}(x_{1}^{\prime })\right]
.  \label{eqn:T1}
\end{align}

Step II. (Iterative Estimate) Plugging the calculation in Step I into %
\eqref{e:example e2}, we have%
\begin{align*}
\eqref{e:example e2}\leq & \sum_{\left( l,m,n\right) \in \mathcal{P}%
}\int_{[0,t_{1}]^{3}}d\underline{t}_{4}\int_{0}^{t_{1}}dt_{5}\iiint
d\left\vert \mu _{t_{5}}^{(l)}\right\vert (\psi )d\left\vert \mu
_{t_{5}}^{(m)}\right\vert (\omega )d\left\vert \mu _{t_{5}}^{(n)}\right\vert
(\phi ) \\
& \times \left\Vert T_{1}(x_{1})\right\Vert _{L^{2}}\left\Vert \bar{\phi}%
\right\Vert _{L^{2}} \\
\leq & \sum_{\left( l,m,n\right) \in \mathcal{P}%
}\int_{[0,t_{1}]^{2}}dt_{3}t_{4}\int_{0}^{t_{1}}dt_{5}\iiint d\left\vert \mu
_{t_{5}}^{(l)}\right\vert (\psi )d\left\vert \mu _{t_{5}}^{(m)}\right\vert
(\omega )d\left\vert \mu _{t_{5}}^{(n)}\right\vert (\phi ) \\
& \times \left\Vert T_{1}\right\Vert _{L_{t_{2}}^{1}L^{2}}
\end{align*}%
where 
\begin{equation*}
\left\Vert T_{1}\right\Vert _{L_{t_{2}}^{1}L^{2}}\leq Ct_{1}^{\frac{1}{3}%
}\left\Vert \phi \right\Vert _{H^{\frac{2}{3}}}\left\Vert T_{2}\right\Vert
_{L^{2}}\left\Vert \phi \right\Vert _{L^{2}}
\end{equation*}%
by \eqref{TL2}. 
Thus%
\begin{align*}
\eqref{e:example e2}\leq & \;Ct_{1}^{\frac{1}{3}}\sum_{\left( l,m,n\right)
\in \mathcal{P}}\int_{[0,t_{1}]}t_{4}\int_{0}^{t_{1}}dt_{5}\iiint
d\left\vert \mu _{t_{5}}^{(l)}\right\vert (\psi )d\left\vert \mu
_{t_{5}}^{(m)}\right\vert (\omega )d\left\vert \mu _{t_{5}}^{(n)}\right\vert
(\phi ) \\
& \left\Vert \phi \right\Vert _{H^{\frac{2}{3}}}\left\Vert T_{2}\right\Vert
_{L_{t_{3}}^{1}L^{2}}.
\end{align*}%
By \eqref{TL2} again, 
\begin{equation*}
\left\Vert T_{2}(x_{2})\right\Vert _{L_{t_{3}}^{1}L_{x_{2}}^{2}}\leq Ct_{1}^{%
\frac{1}{3}}\left\Vert \phi \right\Vert _{H^{\frac{2}{3}}}\left\Vert
T_{3}\right\Vert _{L^{2}}\left\Vert \phi \right\Vert _{L^{2}},
\end{equation*}%
and hence 
\begin{align*}
\eqref{e:example e2}\leq & \left( Ct_{1}^{\frac{1}{3}}\right)
^{2}\sum_{\left( l,m,n\right) \in \mathcal{P}}\int_{0}^{t_{1}}dt_{5}\iiint
d\left\vert \mu _{t_{5}}^{(l)}\right\vert (\psi )d\left\vert \mu
_{t_{5}}^{(m)}\right\vert (\omega )d\left\vert \mu _{t_{5}}^{(n)}\right\vert
(\phi ) \\
& \left\Vert \phi \right\Vert _{H^{\frac{2}{3}}}^{2}\left\Vert
T_{3}\right\Vert _{L_{t_{4}}^{1}L_{x_{2}}^{2}} \\
\leq & \left( Ct_{1}^{\frac{1}{3}}\right) ^{3}\sum_{\left( l,m,n\right) \in 
\mathcal{P}}\int_{0}^{t_{1}}dt_{5}\iiint d\left\vert \mu
_{t_{5}}^{(l)}\right\vert (\psi )d\left\vert \mu _{t_{5}}^{(m)}\right\vert
(\omega )d\left\vert \mu _{t_{5}}^{(n)}\right\vert (\phi ) \\
& \left\Vert \phi \right\Vert _{H^{\frac{2}{3}}}^{3}\left\Vert a_{|\psi
|^{2},|\omega |^{2}}(r_{3})\phi (x_{3})\right\Vert _{L^{2}}.
\end{align*}%
By the fact that $\left\vert \mu _{t}^{(i)}\right\vert $ is supported in the
set 
\begin{equation*}
\left\{ \phi \in L^{2}(\mathbb{R}^{2})|\left\Vert \phi \right\Vert _{H^{%
\frac{2}{3}}}\leq M\right\} ,
\end{equation*}%
we have%
\begin{align*}
\eqref{e:example e2}\leq & \left( CMt_{1}^{\frac{1}{3}}\right)
^{3}\sum_{\left( l,m,n\right) \in \mathcal{P}}\int_{0}^{t_{1}}dt_{5}\iiint
d\left\vert \mu _{t_{5}}^{(l)}\right\vert (\psi )d\left\vert \mu
_{t_{5}}^{(m)}\right\vert (\omega )d\left\vert \mu _{t_{5}}^{(n)}\right\vert
(\phi ) \\
& \left\Vert a_{|\psi |^{2},|\omega |^{2}}(r_{3})\phi (x_{3})\right\Vert
_{L^{2}}.
\end{align*}%
One then proceeds as in the estimate of $\Tr\left\vert
G^{(1)}(t_{1})\right\vert $ to reach 
\begin{equation*}
\eqref{e:example e2}\leq \left( CMt_{1}^{\frac{1}{3}}\right)
^{3}M^{4}t_{1}\left( \sup_{t\in \left[ 0,t_{1}\right] }\Tr\left\vert \gamma
^{(1)}(t)\right\vert \right)
\end{equation*}%
Selecting a $C_{0}$ bigger than $M$ and $1$, we obtain%
\begin{equation*}
\eqref{e:example e2}\leq \left( C_{0}t_{1}^{\frac{1}{3}}\right)
^{3}t_{1}\left( \sup_{t\in \left[ 0,t_{1}\right] }\Tr\left\vert \gamma
^{(1)}(t)\right\vert \right) .
\end{equation*}%
Plugging the above estimate back into \eqref{e:example e1}, we get%
\begin{equation*}
\Tr\left\vert \mathrm{II}^{(3,\sigma )}\right\vert (t_{1})\leq \left[ 4\cdot
2^{3}\cdot \left( C_{0}t_{1}^{\frac{1}{3}}\right) ^{3}\right] t_{1}\left(
\sup_{t\in \left[ 0,t_{1}\right] }\Tr\left\vert \gamma ^{(1)}(t)\right\vert
\right)
\end{equation*}%
as desired. This finishes the proof of the example.
\end{examp}

One observation to make concerning our approach in Example \ref{example:3B}
is that the structure found in Step I is crucial. Such a structure generated
by the collision operator $B$ and propagator $U$ is found in general, and we
state its relevant properties in the following lemma.

\begin{lem}
\label{lem:tricol} Let $M\in \mathbb{N}$, $M>1$, and for each $j$, $1\leq
j\leq M$, suppose that the two functions $f_{j}(x_{j})$, $f_j^{\prime
}(x_{j}^{\prime })$ belong to\footnote{We suppress the time dependence in the notation
and allow restriction to time intervals, which may be achieved, for instance, by
introducing sharp time cutoffs.} $L^1_t H^{s}_x(\mathbb{R}^{2})$, $\frac{1}{2}\leq
s\leq \frac{2}{3}$.
Then there exist $L^1_t H^{s}_x(\mathbb{R}^{2})$ functions $%
h,h^{\prime }$ such that 
\begin{equation*}
B_{\sigma (M),M}^{\pm }U_{M,M+1}^{(M)}\left[ \prod_{j=1}^{M}f_{j}(x_{j})%
f_{j}^{\prime }(x_{j}^{\prime })\right] =h_{\sigma (M)}(x_{\sigma
(M)})h^{\prime }_{\sigma (M)}(x_{\sigma (M)}^{\prime
})U_{M,M+1}^{(M-2)}\left[ \prod_{\substack{j=1 \\ j \neq \sigma(M)}}^{M-1}f_{j}(x_{j})f_{j}^{\prime
}(x_{j}^{\prime })\right]
\end{equation*}%
In the case where $B$ is $B_{\sigma (M),M}^{+}$, $h$ is a trilinear form of
the type \eqref{T} and $h^{\prime }$ is a linear evolution. In the case
where $B$ is $B_{\sigma (M),M}^{-}$, the roles of $h$ and $h^{\prime }$ are
reversed.
\end{lem}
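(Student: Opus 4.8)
The plan is to establish the identity by a direct computation that carries out, at a general level of collapsing, exactly what is done in Step~I of Example~\ref{example:3B}. First I would use the tensor structure of the free flow on $\mathbb{R}^{2M}\times\mathbb{R}^{2M}$: writing $U_{M,M+1}:=e^{i(t_M-t_{M+1})\Delta}$ and $U_{M+1,M}:=e^{-i(t_M-t_{M+1})\Delta}$ for the one-particle propagators, one has
\[
U_{M,M+1}^{(M)}\left[\prod_{j=1}^{M}f_j(x_j)f_j'(x_j')\right]=\prod_{j=1}^{M}\bigl(U_{M,M+1}f_j\bigr)(x_j)\,\bigl(U_{M+1,M}f_j'\bigr)(x_j').
\]
Next I would apply $B_{\sigma(M),M}^{+}$, which, by the definition of $B^{+}$ recalled in Example~\ref{example:3B}, performs the substitution $x_M\mapsto x_{\sigma(M)}$, $x_M'\mapsto x_{\sigma(M)}$ and drops the $M$-th particle. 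After this substitution the only surviving factors carrying $x_{\sigma(M)}$ are the three factors $\bigl(U_{M,M+1}f_{\sigma(M)}\bigr)(x_{\sigma(M)})$, $\bigl(U_{M,M+1}f_M\bigr)(x_{\sigma(M)})$, and $\bigl(U_{M+1,M}f_M'\bigr)(x_{\sigma(M)})$; the only factor carrying $x_{\sigma(M)}'$ is $\bigl(U_{M+1,M}f_{\sigma(M)}'\bigr)(x_{\sigma(M)}')$; and the remaining $M-2$ factors, namely those with $j\in\{1,\dots,M-1\}\setminus\{\sigma(M)\}$, assemble precisely into $U_{M,M+1}^{(M-2)}\bigl[\prod_{j\neq\sigma(M),\ 1\le j\le M-1}f_j(x_j)f_j'(x_j')\bigr]$. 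Defining $h_{\sigma(M)}$ to be the product of the first three factors and $h'_{\sigma(M)}$ the fourth gives the claimed identity; the $B_{\sigma(M),M}^{-}$ case is identical, with the substitution $x_M,x_M'\mapsto x_{\sigma(M)}'$ interchanging the roles of the primed and unprimed variables and hence of $h$ and $h'$.

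Next I would verify the regularity assertions. The function $h'_{\sigma(M)}$ is a one-particle free Schr\"odinger evolution of $f'_{\sigma(M)}\in L^1_tH^s_x$, so, since $e^{it\Delta}$ is unitary on $H^s_x(\mathbb{R}^2)$ for every $s$ (and sharp time cutoffs may be inserted as in the footnote), $h'_{\sigma(M)}\in L^1_tH^s_x$ with no loss. The function $h_{\sigma(M)}$ is a trilinear form of type \eqref{T} with inputs $f_{\sigma(M)},f_M,f_M'\in L^1_tH^s_x$, and its membership in $L^1_tH^s_x$ follows from the trilinear estimate \eqref{TH}. This is precisely where the hypothesis $\tfrac12\le s\le\tfrac23$ enters: the upper bound $s\le\tfrac23$ is forced by the 2D Sobolev embedding $H^{\frac23}(\mathbb{R}^2)\hookrightarrow L^{6}(\mathbb{R}^2)$ that controls the triple product, and the lower bound $s\ge\tfrac12$ is what the nonlinear-potential estimate (Corollary~\ref{Corollary:Key2}), invoked downstream, requires of the inputs.

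The computation itself is not difficult; the only thing needing care is the bookkeeping of which spatial variables are identified under $B_{\sigma(M),M}^{\pm}$ and the tensor-product indexing, exactly as in \eqref{eqn:T3}--\eqref{eqn:T1}. I therefore expect the main (mild) obstacle to be organizational---presenting the general collapsing step so that it transparently specializes to the three instances of Example~\ref{example:3B}---together with the single genuinely analytic point, namely that the trilinear output $h$ lands in $L^1_tH^s_x$ and not merely in $L^1_tL^2_x$, which is what \eqref{TH} delivers.
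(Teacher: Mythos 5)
Your proof proposal is correct and follows essentially the same route as the paper: you expand the tensorized free propagator, apply the definition of $B_{\sigma(M),M}^{\pm}$ as the collapse $x_M,x_M'\mapsto x_{\sigma(M)}$ (resp.\ $x_{\sigma(M)}'$), and identify the three surviving factors in $x_{\sigma(M)}$ as the trilinear form and the lone remaining factor as the linear evolution, exactly as in the paper's explicit formula for $T_{\sigma(M),M}$. The only cosmetic difference is that you invoke unitarity of $e^{it\Delta}$ on $H^s$ for the linear factor where the paper cites Strichartz; both give the needed $L^1_tH^s_x$ control, and your remark that $s\geq\tfrac12$ is imposed only for the downstream use of Corollary~\ref{Corollary:Key2} (rather than for this lemma, where $s\leq\tfrac23$ is what \eqref{TH} requires) is an accurate reading of the hypotheses.
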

\begin{proof}
The collision operator leaves untouched each term for which $j\notin
\{M,\sigma (M)\}$. Only the propagator affects these terms. So we have 
\begin{equation*}
\begin{split}
& B_{\sigma (M),M}^{+}U_{M,M+1}^{(M)}\left[ \prod_{j=1}^{M}f_{j}(x_{j})%
f_{j}^{\prime }(x_{j}^{\prime })\right] \\
& =U_{M,M+1}^{(M-2)}\left[ \prod_{j\in \{1,\ldots ,M\}\setminus \{M,\sigma
(M)\}}f_{j}(x_{j})f_{j}^{\prime }(x_{j}^{\prime })\right] \cdot
T_{\sigma (M),M}(x_{\sigma (M)})e^{-i(t_{M}-t_{M+1})\Delta _{x_{\sigma
(M)}^{\prime }}}f^{\prime }_{\sigma (M)}(x_{\sigma (M)}^{\prime })
\end{split}%
\end{equation*}%
where 
\begin{equation*}
\begin{split}
T_{\sigma (M),M}(x_{\sigma (M)}):=& \;e^{i(t_{M}-t_{M+1})\Delta _{x_{\sigma
(M)}}}f_{\sigma (M)}(x_{\sigma (M)})\cdot e^{i(t_{M}-t_{M+1})\Delta
_{x_{\sigma (M)}}}f_{M}(x_{\sigma (M)})\cdot \\
& \quad \cdot e^{-i(t_{M}-t_{M+1})\Delta _{x_{\sigma (M)}}}
f^{\prime }_{M}(x_{\sigma (M)})
\end{split}%
\end{equation*}

Similarly, 
\begin{equation*}
\begin{split}
& B_{\sigma (M),M}^{-}U_{M,M+1}^{(M)}\left[ \prod_{j=1}^{M}f_{j}(x_{j})%
f_{j}^{\prime }(x_{j}^{\prime })\right] \\
& =U_{M,M+1}^{(M-2)}\left[ \prod_{j\in \{1,\ldots ,M\}\setminus \{M,\sigma
(M)\}}f_{j}(x_{j})f_{j}^{\prime }(x_{j}^{\prime })\right] \cdot
T_{\sigma (M),M}^{\prime }(x_{\sigma (M)}^{\prime
})e^{i(t_{M}-t_{M+1})\Delta _{x_{\sigma (M)}}}f_{\sigma (M)}(x_{\sigma (M)})
\end{split}%
\end{equation*}%
where 
\begin{equation*}
\begin{split}
T_{\sigma (M),M}^{\prime }(x_{\sigma (M)}^{\prime }):=&
\;e^{i(t_{M}-t_{M+1})\Delta _{x_{\sigma (M)}^{\prime }}}f_{M}(x_{\sigma
(M)}^{\prime })\cdot e^{-i(t_{M}-t_{M+1})\Delta _{x_{\sigma (M)}^{\prime }}}%
f^{\prime }_{\sigma (M)}(x_{\sigma (M)}^{\prime })\cdot \\
& \quad \cdot e^{-i(t_{M}-t_{M+1})\Delta _{x_{\sigma (M)}^{\prime }}}%
f^{\prime }_{M}(x_{\sigma (M)}^{\prime })
\end{split}%
\end{equation*}

The $L^1_t H^{s}_x$ bounds follow from \eqref{TH} and Strichartz.
\end{proof}

\begin{proof}[Proof of Lemma \protect\ref{lem:individual summand}]
%Warning, the variable $r$ is used as an index rather than standing for $|x|$.
Using \eqref{partIIexp}, \eqref{Gk}, and \eqref{eqn:core of G(k)}, we write 
\begin{equation*}
\begin{split}
\mathrm{II}^{(r,\sigma )}=& \sum_{j=1}^{r+1}\sum_{(l,m,n)\in \mathcal{P}%
}\int_{D}d\underline{t}_{r+1}J(\underline{t}_{r+1},\sigma )\left\{
\int_{0}^{t_{r+1}}dt_{r+2}U^{(r+1)}(t_{r+1}-t_{r+2})\right. \\
& \left. \iiint d\mu _{t_{r+2}}^{(l)}(\psi )d\mu _{t_{r+2}}^{(m)}(\omega
)d\mu _{t_{r+2}}^{(n)}(\phi )\left[ a_{|\psi |^{2},|\omega
|^{2}}(|x_{j}|)-a_{|\psi |^{2},|\omega |^{2}}(|x_{j}^{\prime }|)\right]
(|\phi \rangle \langle \phi |)^{\otimes (r+1)}\right\}
\end{split}%
\end{equation*}%
We abbreviate 
\begin{equation*}
J(\underline{t}_{r+1},\sigma )=U_{1,2}^{(1)}B_{1,2}U_{2,3}^{(2)}B_{\sigma
(3),3}\cdots U_{r,r+1}^{(r)}B_{\sigma (r+1),r+1}
\end{equation*}%
and write 
\begin{equation*}
\begin{split}
\Tr &\left\vert \mathrm{II}^{(r,\sigma )}\right\vert (t_{1})
\\
\leq
&\sum_{j=1}^{r+1}\sum_{(l,m,n)\in \mathcal{P}}\int_{[0,t_{1}]^{r}}d\underline{%
t}_{r+1}\int_{0}^{t_{1}}dt_{r+2}\iiint d\left\vert \mu
_{t_{r+2}}^{(l)}\right\vert (\psi )d\left\vert \mu
_{t_{r+2}}^{(m)}\right\vert (\omega )d\left\vert \mu
_{t_{r+2}}^{(n)}\right\vert (\phi ) \\
& \Tr\left\vert U_{1,2}^{(1)}B_{1,2}\cdots U_{r,r+1}^{(r)}B_{\sigma
(r+1),r+1}U_{r+1,r+2}^{(r+1)}\left[ a_{|\psi |^{2},|\omega
|^{2}}(|x_{j}|)-a_{|\psi |^{2},|\omega |^{2}}(|x_{j}^{\prime }|)\right]
(|\phi \rangle \langle \phi |)^{\otimes (r+1)}\right\vert
\end{split}%
\end{equation*}%
To simplify calculations, we drop, without loss of generality, the $%
-a_{|\psi |^{2},|\omega |^{2}}(|x_{j}^{\prime }|)$ term. Also, we split each 
$B_{j,k}$ into two pieces $B_{j,k}^{\pm }$ so that $%
B_{j,k}=B_{j,k}^{+}-B_{j,k}^{-}$. 
%The $B^{+}$ and $B^{-}$ are not too different: the plus case looks likes $%
%(x,x;x^{\prime },x)$ for the corresponding indices, and the minus case looks
%like $(x,x^{\prime };x^{\prime },x^{\prime })$.

Consider first the innermost terms 
\begin{equation*}
B_{\sigma (r+1),r+1}^{\pm }U_{r+1,r+2}^{(r+1)}a_{|\psi |^{2},|\omega
|^{2}}(|x_{j}|)(|\phi \rangle \langle \phi |)^{\otimes (r+1)}
\end{equation*}%
The index $j\in \{1,\ldots ,r+1\}$ and the permutation $\sigma $ together
determine at what point $a_{|\psi |^{2},|\omega |^{2}}(|x_{j}|)$ is directly
affected by a collision operator. In any case, we claim that, with respect
to the variables $x_{\sigma (r+1)},x_{\sigma (r+1)}^{\prime }$, the term 
\begin{equation*}
B_{\sigma (r+1),r+1}^{+}U_{r+1,r+2}^{(r+1)}a_{|\psi |^{2},|\omega
|^{2}}(|x_{j}|)(|\phi \rangle \langle \phi |)^{\otimes (r+1)}
\end{equation*}%
is a trilinear form of the form $T$ in \eqref{T} (see \eqref{eqn:T3}, %
\eqref{eqn:T2}, \eqref{eqn:T1} for examples of these trilinear forms) in the 
$x_{\sigma (r+1)}$ variable and a linear flow in the $x_{\sigma
(r+1)}^{\prime }$ variable (the term with $B^-$ instead of $B^+$ is similar
but with the roles of the primed and unprimed variables reversed). Note that
precisely one of the terms in the trilinear form $T$ involves $a_{|\psi
|^{2},|\omega |^{2}}(|x_{j}|)$. This follows from Lemma \ref{lem:tricol}.
Additionally, Lemma \ref{lem:tricol} is formulated so that we can apply it
iteratively until termination, at which point we have one term that is
trilinear of the form \eqref{T} in precisely one of $x_{1}$, $x_{1}^{\prime }
$, and another term that is a linear evolution of a function of the
remaining spatial variable. Step I of Example \ref{example:3B} illustrates
such a process.

The final step is to iteratively bound the terms. We follow Step II of
Example \ref{example:3B}. The underlying idea behind the iterative bounds is
relatively straightforward. We start by controlling the trace norm using
Cauchy-Schwarz in space. One factor is simply a $\phi $ term associated to
the measure, and so will have $L^{2}$ norm equal to one. This leaves us with
the other term in $L_{t}^{1}L^{2}$. The next step is to apply \eqref{TL2}.
This places one factor in $\dot{H}^{s}$ and the remaining ones in $L^{2}$.
So that we can eventually apply \eqref{mainnest}, it is important to always
place in $L^{2}$ the term appearing in the right-hand side that involves $%
a_{|\psi |^{2},|\omega |^{2}}(|x_{j}|)$. To control the term placed in $\dot{%
H}^{s}$, we apply \eqref{TH}. For the terms in $L^{2}$, we use \eqref{TL2}
or \eqref{mainnest} as appropriate.
\end{proof}

\begin{rem}
We first remind the reader that, because at each step we are estimating a
linear term of the type $e^{it\Delta }f$ or a trilinear term of the form %
\eqref{T}, we do not need to apply Sobolev embedding as is necessary for
estimating the interaction part. Secondly, the ``$a$" term cannot be
generated by $B$, and thus we do not need to keep track of multiple
``copies" of $\left\vert \phi \right\vert ^{2}\phi $ generated by $B$, in
contrast to what must be done in controlling the interaction part. In
particular, there is no need to introduce binary tree graphs or keep track
of complicated factorization structures of kernels in controlling the
nonlinear part.
\end{rem}

\section{Multilinear estimates}

\label{sec:multi}

In this section we will need the following fractional Leibniz rule
from \cite[Prop.~3.3]{CrWe91}:

\begin{lem}
\label{lem:ProductRule} Let $0 < s \leq 1$ and $1 < r, p_1, p_2, q_1, q_2 <
\infty$ such that $\frac1r = \frac{1}{p_i} + \frac{1}{q_1}$ for $i = 1, 2$.
Then 
\begin{equation*}
\| |\nabla|^s (fg) \|_{L^r} \lesssim \| f \|_{L^{p_1}} \| |\nabla|^s g
\|_{L^{q_1}} + \| |\nabla|^s f \|_{L^{p_2}} \| g \|_{L^{q_2}}
\end{equation*}
\end{lem}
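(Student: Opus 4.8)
Since this inequality is the classical Kato--Ponce (Christ--Weinstein) fractional Leibniz rule, the plan is to follow \cite{CrWe91} and prove it by a paraproduct decomposition. Fix a Littlewood--Paley family $\{P_j\}_{j\in\Z}$, write $P_{<j}:=\sum_{k<j}P_k$, and split
\begin{equation*}
fg=\sum_{j}(P_{<j-2}f)(P_jg)+\sum_{j}(P_jf)(P_{<j-2}g)+\sum_{|j-k|\le 2}(P_jf)(P_kg)=:\Pi_{\mathrm{lh}}+\Pi_{\mathrm{hl}}+\Pi_{\mathrm{hh}},
\end{equation*}
the low--high, high--low, and high--high paraproducts. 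Since $\Pi_{\mathrm{hl}}(f,g)=\Pi_{\mathrm{lh}}(g,f)$, the two terms on the right-hand side of the claim will correspond to $\Pi_{\mathrm{lh}}$ (where $|\nabla|^s$ effectively lands on $g$, exponent pair $p_1,q_1$) and to $\Pi_{\mathrm{hl}}$ together with $\Pi_{\mathrm{hh}}$ (where $|\nabla|^s$ can be placed on $f$, exponent pair $p_2,q_2$); throughout I read the exponent relations in the natural way, $\frac1r=\frac1{p_1}+\frac1{q_1}=\frac1{p_2}+\frac1{q_2}$.

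First I would estimate $\Pi_{\mathrm{lh}}$. Each summand $(P_{<j-2}f)(P_jg)$ has Fourier support in an annulus $|\xi|\sim 2^j$, so after a routine Mikhlin-multiplier adjustment of the cutoffs, $|\nabla|^s$ applied to the whole sum is comparable to $\sum_j(P_{<j-2}f)(|\nabla|^sP_jg)$. Bounding the $L^r$ norm by the Littlewood--Paley square function and using H\"older with $\frac1r=\frac1{p_1}+\frac1{q_1}$ gives
\begin{equation*}
\big\|\,|\nabla|^s\Pi_{\mathrm{lh}}\big\|_{L^r}\lesssim\Big\|\big(\sup_j|P_{<j-2}f|\big)\Big(\sum_j\big||\nabla|^sP_jg\big|^2\Big)^{1/2}\Big\|_{L^r}\lesssim\|Mf\|_{L^{p_1}}\Big\|\Big(\sum_j\big||\nabla|^sP_jg\big|^2\Big)^{1/2}\Big\|_{L^{q_1}},
\end{equation*}
where $M$ is the Hardy--Littlewood maximal operator. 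Boundedness of $M$ on $L^{p_1}$ (valid since $p_1>1$) and the Littlewood--Paley theorem on $L^{q_1}$ (valid since $1<q_1<\infty$) turn this into $\|f\|_{L^{p_1}}\|\,|\nabla|^sg\|_{L^{q_1}}$, the first term on the right-hand side. The high--low paraproduct $\Pi_{\mathrm{hl}}$ is handled identically with the roles of $f$ and $g$ and of the exponent pairs interchanged, producing $\|\,|\nabla|^sf\|_{L^{p_2}}\|g\|_{L^{q_2}}$.

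It remains to bound $\Pi_{\mathrm{hh}}$, and this is where I expect the only real obstacle to lie: here each summand $(P_jf)(P_kg)$ with $|j-k|\le 2$ has Fourier support in a \emph{ball} $|\xi|\lesssim 2^j$ rather than an annulus, so the derivative does not land cleanly on a single localized factor. I would insert an outer projection $P_m$: the piece $P_m\big[(P_jf)(P_kg)\big]$ vanishes unless $j\gtrsim m$, whence $2^{sm}\lesssim 2^{sj}$ on the relevant range and $\sum_{m\lesssim j}2^{sm}\lesssim 2^{sj}$ because $s>0$ --- this is the one place the positivity of $s$ is genuinely used. This lets one move the full derivative onto (say) the $f$ factor, so that $|\nabla|^s\Pi_{\mathrm{hh}}$ is, up to bounded multipliers, dominated pointwise by $\sum_{|j-k|\le 2}\big||\nabla|^sP_jf\big|\,\big|P_kg\big|$; estimating by two square functions, H\"older with $\frac1r=\frac1{p_2}+\frac1{q_2}$, and the Littlewood--Paley theorem on $L^{p_2}$ and $L^{q_2}$ yields $\lesssim\|\,|\nabla|^sf\|_{L^{p_2}}\|g\|_{L^{q_2}}$. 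Adding the three estimates proves the lemma. The upper bound $s\le 1$ only serves to keep the multiplier manipulations elementary, while all the conditions $1<p_i,q_i,r<\infty$ enter solely through the maximal-function and square-function bounds.
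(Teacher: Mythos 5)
The paper contains no proof of this lemma: it is quoted directly from Christ--Weinstein \cite[Prop.~3.3]{CrWe91}, so there is nothing internal to compare against. Your Littlewood--Paley paraproduct argument is the standard proof of this fractional Leibniz rule and is essentially the argument in the cited reference, so the approach is the right one and the sketch is correct. You also correctly read the exponent condition, which as literally typeset in the statement would force $p_1=p_2$, as the clearly intended $\frac1r=\frac1{p_1}+\frac1{q_1}=\frac1{p_2}+\frac1{q_2}$. The one spot I would tighten is the wording in the high--high piece: $|\nabla|^s\Pi_{\mathrm{hh}}$ is not dominated \emph{pointwise} by $\sum_{|j-k|\le 2}\lvert\,|\nabla|^sP_jf\,\rvert\,\lvert P_kg\rvert$; rather one passes to the Littlewood--Paley square function in the outer index $m$, uses $2^{sm}\lvert P_m[(P_jf)(P_kg)]\rvert\lesssim 2^{s(m-j)}\,2^{sj}\,M\big[(P_jf)(P_kg)\big]$ for $m\le j+O(1)$, sums the geometric tail in $m$ by Schur (this is precisely where $s>0$ is used, as you note), and then invokes the Fefferman--Stein vector-valued maximal inequality before applying H\"older and Littlewood--Paley in $L^{p_2}$ and $L^{q_2}$. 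That is exactly the mechanism your outline gestures at, so the issue is one of phrasing rather than substance.
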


Define the trilinear form $\cT$ by 
\begin{equation}  \label{T}
\cT(f, g, h) = e^{i (t - t_1) \Delta} f \cdot e^{i (t - t_2) \Delta} g \cdot
e^{i (t - t_3) \Delta} h
\end{equation}

\begin{lem}
\footnote{%
Such trilinear estimates are the precursors to the Klainerman-Machedon
collapsing estimates widely used in the literature. For those estimates, see 
\cite{KlMa08, KiScSt11,
GM,TChenAndNP,ChenDie,ChenAnisotropic,Beckner,GrSoSt14}.}Let $0<s\leq \frac{2%
}{3}$. The trilinear form $\cT$ given by \eqref{T} satisfies 
\begin{equation}
\Vert \cT(f,g,h)\Vert _{L_{t\in \lbrack 0, t_0)}^{1}\dot{H}_{x}^{s}}\lesssim
t_0^{s}\Vert f\Vert _{\dot{H}^{s}}\Vert g\Vert _{\dot{H}^{s}}\Vert h\Vert _{%
\dot{H}^{s}}  \label{TH}
\end{equation}
\end{lem}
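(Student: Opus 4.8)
The plan is to prove the $\dot H^s$ trilinear estimate \eqref{TH} by reducing it to a scalar bilinear estimate for the free Schr\"odinger flow in two dimensions, then iterating. First I would apply the fractional Leibniz rule (Lemma~\ref{lem:ProductRule}) to distribute the $|\nabla|^s$ over the triple product $e^{i(t-t_1)\Delta} f \cdot e^{i(t-t_2)\Delta} g \cdot e^{i(t-t_3)\Delta} h$. Doing this twice expresses $\||\nabla|^s \cT(f,g,h)\|_{L^2_x}$ as a sum of three terms, each of which has $|\nabla|^s$ landing on exactly one of the three factors and the other two factors appearing without derivatives, in suitable $L^{p}_x$ norms with exponents chosen so the H\"older exponents add to $1/2$. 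By symmetry it suffices to bound the term where $|\nabla|^s$ hits $f$, i.e. something like $\| |\nabla|^s e^{i(t-t_1)\Delta} f \cdot e^{i(t-t_2)\Delta} g \cdot e^{i(t-t_3)\Delta} h \|_{L^2_x}$.

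The key step is then a product (bilinear/trilinear) estimate for free waves: I would choose the H\"older triple so that the $|\nabla|^s f$ factor sits in $L^{p_0}_x$ with $p_0$ slightly above $2$ (so $|\nabla|^s e^{it\Delta}f$ is controlled in $L^{p_0}_x$ via Sobolev embedding $\dot H^{\sigma} \hookrightarrow L^{p_0}$ with the Schr\"odinger flow being unitary on $\dot H^\sigma$), and the other two factors in $L^{q}_x$ with $q$ large, controlled by a Strichartz estimate $\| e^{it\Delta} g\|_{L^{a}_t L^{q}_x} \lesssim \|g\|_{\dot H^{s}}$ for an admissible-after-Sobolev pair. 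In $2$D the relevant range is exactly where $s = 2/3$ is borderline: the Strichartz exponents needed to put the two undifferentiated factors in $L^q_x$ while keeping the derivative factor in $L^{2+}_x$ force $q$, and hence the time-Lebesgue exponent $a$, into a regime that is integrable in $t$ only for $s$ up to $2/3$. After applying H\"older in $x$ pointwise in $t$ and then H\"older in $t \in [0,t_0)$ (where the three time-Lebesgue exponents $\tfrac1a + \tfrac1a + \tfrac1{a'}$-type bookkeeping sums to $1$, yielding the $L^1_t$ on the left), one collects a factor $t_0^{\beta}$ from the time-H\"older step; a scaling/dimensional-analysis check forces $\beta = s$, matching the claimed $t_0^s$.

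More concretely, the cleanest route is: fix $s \in (0, 2/3]$, write $p$ by $\tfrac1p = \tfrac12 - \tfrac{s}{2}$ (allowed since then $2 < p < \infty$ because $0 < s \le 2/3 < 1$ wait $p \ge 2$), so that Sobolev gives $\|e^{i\tau\Delta}h\|_{L^p_x} \lesssim \|e^{i\tau\Delta}h\|_{\dot H^s_x} = \|h\|_{\dot H^s_x}$ uniformly in $\tau$, and for the differentiated factor use the Strichartz/Sobolev pair $(a, r)$ with $\tfrac1a = \tfrac s2$ and $\tfrac1r + \tfrac1r + \tfrac1p = \tfrac12$ so that $\||\nabla|^s e^{i\tau\Delta}f\|_{L^a_{\tau} L^r_x} \lesssim \|f\|_{\dot H^s_x}$, exploiting that $|\nabla|^s$ commutes with the flow. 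Then H\"older in $x$ gives a pointwise-in-$t$ bound by a product of these norms, and H\"older in $t\in[0,t_0)$ with the three time exponents $a, \infty, \infty$ (two of the factors having no time integrability used, only the $\sup$ from the unitary-on-$\dot H^s$ estimate) produces $\| \cT \|_{L^1_t \dot H^s_x} \lesssim |[0,t_0)|^{1 - 1/a} \|f\|_{\dot H^s}\|g\|_{\dot H^s}\|h\|_{\dot H^s} = t_0^{s}\, \|f\|_{\dot H^s}\|g\|_{\dot H^s}\|h\|_{\dot H^s}$ since $1 - 1/a = 1 - s/2$\,—\,and one re-examines the exponent bookkeeping to land exactly on $t_0^s$ (the precise pairing of which factors carry time integrability is what must be matched carefully). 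Summing the three symmetric Leibniz terms completes the proof.

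The main obstacle is the endpoint bookkeeping in two dimensions: verifying that the H\"older exponents $(p, r)$ in space and $(a, \infty)$ in time can be chosen simultaneously admissible for the $2$D Schr\"odinger Strichartz estimates and for Sobolev embedding, with the constraint that one stays strictly away from the forbidden $L^\infty_x$-type endpoint, and that the resulting power of $t_0$ is exactly $s$ rather than merely something positive. This is precisely why $s = 2/3$ appears as the ceiling\,—\,pushing $s$ larger would require an inadmissible exponent\,—\,and the delicate part of the write-up is confirming that for every $s \in (0, 2/3]$ the needed exponents lie in the open admissible range, using that each application of $e^{i\tau\Delta}$ is unitary on every homogeneous Sobolev space to absorb the flows at the undifferentiated factors, and invoking Lemma~\ref{lem:ProductRule} to legitimately split the derivative. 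Everything else is routine H\"older and Strichartz.
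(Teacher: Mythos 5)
Your general framework---fractional Leibniz to put $|\nabla|^s$ on one factor, then H\"older in space and time, Sobolev embedding, and Strichartz---matches the paper's. But the way you distribute the time integrability does not cover the full range $0<s\le \tfrac23$, and the concrete exponents you state are internally consistent only at $s=\tfrac23$. You put the two undifferentiated factors into $L^\infty_t L^p_x$ via unitarity plus $\dot H^s\hookrightarrow L^p$ (so $\tfrac1p=\tfrac12-\tfrac{s}{2}$) and aim to extract the whole $t_0^s$ from a single H\"older in time applied to the differentiated factor placed in an admissible $L^q_tL^r_x$. The spatial H\"older identity $\tfrac12=\tfrac1r+\tfrac2p$ then forces $\tfrac1r=s-\tfrac12$, which is nonpositive for $s\le\tfrac12$; at $s=\tfrac12$ the needed admissible pair is the forbidden $2$D endpoint $(2,\infty)$. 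So this variant can at best work for $\tfrac12<s<1$, not $0<s\le\tfrac23$. Moreover, the exponents you actually write, $\tfrac1a=\tfrac{s}{2}$ together with $\tfrac1r+\tfrac1r+\tfrac1p=\tfrac12$, are compatible with admissibility and with $\tfrac1p=\tfrac12-\tfrac{s}{2}$ only when $s=\tfrac23$, and they yield $t_0^{1-1/a}=t_0^{1-s/2}$, which equals $t_0^s$ only at $s=\tfrac23$. You flag the bookkeeping as provisional, but the fix is structural, not cosmetic.

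The paper avoids this by splitting $t_0^s=t_0^{s/2}\cdot t_0^{s/2}$ over the two undifferentiated factors rather than loading it all on the derivative factor. After fractional Leibniz, each term is bounded by a product of three $L^3_{t\in[0,t_0)}L^6_x$ norms, with $(3,6)$ being $2$D-admissible and $\tfrac13+\tfrac13+\tfrac13=1$, $\tfrac16+\tfrac16+\tfrac16=\tfrac12$. The differentiated factor is controlled directly by Strichartz in $L^3_t\dot W^{s,6}_x$ and carries no power of $t_0$. Each undifferentiated factor is first Sobolev-embedded $\dot W^{s,p}_x\hookrightarrow L^6_x$ with $\tfrac1p=\tfrac16+\tfrac{s}{2}$; then H\"older in time passes from $L^3_{t\in[0,t_0)}$ to the admissible $L^q_t$ with $\tfrac1q=\tfrac12-\tfrac1p=\tfrac13-\tfrac{s}{2}$, picking up $t_0^{1/3-1/q}=t_0^{s/2}$, and Strichartz closes the estimate. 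This works uniformly for $0<s\le\tfrac23$: the condition $s\le\tfrac23$ is precisely $p\ge2$, which keeps $(q,p)$ admissible. You should restructure the argument along these lines.
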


\begin{proof}
By the fractional Leibniz rule, we have 
\begin{equation*}
\begin{split}
\| \cT(f, g, h) \|_{L^1_t \dot{H}^{s}_x} \lesssim& \; \| e^{i(t - t_1) \Delta}
f \|_{L^3_t \dot{W}^{s, 6}_x} \| e^{i(t - t_2) \Delta} g \|_{L^3_t L^6_x} \|
e^{i (t -t_3) \Delta} h \|_{L^3_t L^6_x} \\
&+ \| e^{i(t - t_1) \Delta} f \|_{L^3_t L^6_x} \| e^{i(t - t_2) \Delta} g
\|_{L^3_t \dot{W}^{s, 6}_x} \|e^{i(t-t_3)\Delta}h\|_{L^3_t L^6_x} \\
&+ \| e^{i(t - t_1) \Delta} f \|_{L^3_t L^6_x} \| e^{i(t - t_2) \Delta} g
\|_{L^3_t L^6_x} \|e^{i(t-t_3)\Delta}h\|_{L^3_t \dot{W}^{s,6}_x}
\end{split}%
\end{equation*}
By Sobolev embedding, we bound the first term by 
\begin{equation*}
\| e^{i(t - t_1) \Delta} f \|_{L^3_t \dot{W}^{s, 6}_x} \| e^{i(t - t_2)
\Delta} g \|_{L^3_t \dot{W}^{s, p}_x} \| e^{i (t -t_3) \Delta} h \|_{L^3_t 
\dot{W}^{s, p}_x}
\end{equation*}
where $\frac1p = \frac16 + \frac{s}{2}$. Note that $2 \leq p < 6$. Let $q$
be given by $\frac1q + \frac1p = \frac12$ so that $(q, p)$ form a
Schr\"odinger-admissible Strichartz pair (see, for instance, \cite[\S 2]%
{Tao06}). So we use H\"older in time to bound the expression by 
\begin{equation*}
\| e^{i(t - t_1) \Delta} f \|_{L^3_t \dot{W}^{s, 6}_x} t_0^{\frac13 - \frac1q}
\| e^{i(t - t_2) \Delta} g \|_{L^{q}_t \dot{W}^{s, p}_x} t_0^{\frac13 -
\frac1q} \| e^{i (t -t_3) \Delta} h \|_{L^{q}_t \dot{W}^{s, p}_x}
\end{equation*}
Finally, we conclude by applying Strichartz estimates and noting that $%
\frac13 - \frac1q = \frac{s}{2}$. The second and third terms are similar.
\end{proof}

\begin{lem}
Let $0 < s \leq 2$. The trilinear form $\cT$ given by \eqref{T} satisfies 
\begin{equation}  \label{TL2}
\| \cT(f, g, h) \|_{L^1_{t \in [0, t_0)} L^2_x} \lesssim t_0^{\frac{s}{2}} \| f
\|_{L^2} \| g \|_{L^2} \| h \|_{\dot{H}^{s}}
\end{equation}
\end{lem}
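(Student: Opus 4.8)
The plan is to bound the $L^1_tL^2_x$ norm of the triple product by a product of mixed Strichartz norms, applying Hölder's inequality first in $x$ and then in $t$ over $[0,t_0)$, with the exponents arranged so that the gain $t_0^{s/2}$ is produced by using strictly subcritical time integrability for two of the three factors. Abbreviate $F=e^{i(t-t_1)\Delta}f$, $G=e^{i(t-t_2)\Delta}g$, $H=e^{i(t-t_3)\Delta}h$. For $0<s<1$, Hölder in $x$ with the exponents $\tfrac4s,\tfrac4s,\tfrac2{1-s}$ (whose reciprocals sum to $\tfrac s4+\tfrac s4+\tfrac{1-s}2=\tfrac12$) gives, for each fixed $t$,
\[
\|\cT(f,g,h)(t)\|_{L^2_x}\le \|F(t)\|_{L^{4/s}_x}\,\|G(t)\|_{L^{4/s}_x}\,\|H(t)\|_{L^{2/(1-s)}_x}.
\]

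For the $H$-factor I would use that the two-dimensional Sobolev embedding $\dot H^s(\mathbb R^2)\hookrightarrow L^{2/(1-s)}$ and the unitarity of $e^{it\Delta}$ on $\dot H^s$ yield $\|H\|_{L^\infty_tL^{2/(1-s)}_x}\lesssim\|h\|_{\dot H^s}$, with no loss in $t_0$. For $F$ and $G$, one observes that $\big(\tfrac4{2-s},\tfrac4s\big)$ is a non-endpoint admissible Strichartz pair in dimension two, so Hölder in time on $[0,t_0)$ followed by Strichartz gives $\|F\|_{L^2_tL^{4/s}_x}\le t_0^{\frac12-\frac{2-s}4}\|F\|_{L^{4/(2-s)}_tL^{4/s}_x}\lesssim t_0^{s/4}\|f\|_{L^2}$, and likewise for $G$. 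Since the time exponents $2,2,\infty$ satisfy $\tfrac12+\tfrac12+0=1$, Hölder in $t$ closes, and multiplying the three bounds gives $\|\cT(f,g,h)\|_{L^1_tL^2_x}\lesssim t_0^{s/4}\cdot t_0^{s/4}\cdot\|f\|_{L^2}\|g\|_{L^2}\|h\|_{\dot H^s}=t_0^{s/2}\|f\|_{L^2}\|g\|_{L^2}\|h\|_{\dot H^s}$. At $s=\tfrac23$ this specializes to $\tfrac4s=\tfrac2{1-s}=6$ and $\tfrac4{2-s}=3$, recovering the $t_0^{1/3}$ factor together with the $L^3_tL^6_x$ and $L^6_x$ bounds used in the applications; indeed all uses of \eqref{TL2} in this paper occur for $s\in[\tfrac12,\tfrac23]$.

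The step demanding care is the admissibility of the exponents, and this is exactly where the upper bound on $s$ enters. Taking $L^{2/(1-s)}_x$ for the $H$-factor is legitimate only while $s<1$; as $s\uparrow1$ one is pushed toward the false embedding $\dot H^1(\mathbb R^2)\hookrightarrow L^\infty$, and for $s\ge1$ the homogeneous norm $\|h\|_{\dot H^s}$ controls no $L^r_x$ norm of $e^{i(t-t_3)\Delta}h$. To cover $1\le s\le2$ one would decompose $h=P_{\le1}h+P_{>1}h$, using a Bernstein estimate to place $e^{i(t-t_3)\Delta}P_{>1}h$ in $L^\infty_tL^\infty_x$ (legitimate once $s>1$, since there the homogeneous and inhomogeneous norms of the high-frequency part are comparable) and handling the low-frequency part separately. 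I expect reconciling the weaker $t_0$-gain available from low frequencies with the claimed power $t_0^{s/2}$ to be the genuine obstacle in the range $s\in[1,2]$; for $s<1$ — all that the rest of the paper needs — the estimate reduces, as above, to Hölder together with the same two-dimensional Strichartz and Sobolev inequalities already used to prove \eqref{TH}.
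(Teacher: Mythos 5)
Your argument is essentially the paper's: Hölder in $x$ with exponents $\tfrac{4}{s},\tfrac{4}{s},\tfrac{2}{1-s}$, Sobolev embedding plus unitarity to put $e^{i(t-t_3)\Delta}h$ in $L^\infty_t L^{2/(1-s)}_x$ with the $\dot H^s$ bound, and the admissible Strichartz pair $(q,r)=\bigl(\tfrac{4}{2-s},\tfrac{4}{s}\bigr)$ together with Hölder in time on $[0,t_0)$ to gain $t_0^{s/2}$. The only cosmetic difference is that you distribute the time-Hölder gain as $t_0^{s/4}\cdot t_0^{s/4}$ over the two Strichartz factors individually, while the paper takes one Hölder in $t$ for the whole product; the exponent bookkeeping is identical. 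Your closing observation is also correct and worth keeping in mind: the displayed argument requires $s<1$ for the exponent $2/(1-s)$ (and for the Sobolev embedding $\dot H^s(\mathbb R^2)\hookrightarrow L^{2/(1-s)}$) to make sense, so the hypothesis ``$0<s\le 2$'' in the lemma statement is broader than what this proof supports; since the paper only invokes \eqref{TL2} for $s\in[\tfrac12,\tfrac23]$, this overstatement is harmless, and you need not carry out the high/low frequency splitting you sketch.
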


\begin{proof}
By H\"older's inequality, 
\begin{equation*}
\| T(f, g, h) \|_{L^1_{t \in [0, t_0)} L^2_x} \leq t_0^{\frac{s}{2}} \| e^{i (t
- t_1) \Delta} f \|_{L^{q}_t L^{r}_x} \| e^{i (t - t_2) \Delta} g
\|_{L^{q}_t L^{r}_x} \| e^{i (t - t_3) \Delta} h \|_{L^\infty_t L^{p}_x}
\end{equation*}
where $\frac1q = \frac12 - \frac{s}{4}$, $r = 4/s$, and $p = 2/(1 - s)$.
Using Strichartz estimates and Sobolev embedding, we control the right hand
side by 
\begin{equation*}
t_0^{\frac{s}{2}} \| f \|_{L^2_x} \| g \|_{L^2_x} \| e^{i (t - t_3) \Delta} h
\|_{L^\infty_t \dot{H}^{s}_x}
\end{equation*}
Finally, we conclude the bound stated in the lemma by noting that the
Schr\"odinger propagator is an isometry on $L^2$-based spaces.
\end{proof}

\begin{rem}
From the proofs of both \eqref{TH} and \eqref{TL2} it is evident that any of 
$e^{i (t - t_1) \Delta}f$, $e^{i (t - t_2) \Delta} g$, and $e^{i (t - t_3)
\Delta} h$ can be replaced by its complex conjugate in the trilinear form %
\eqref{T}.
\end{rem}

For the next set of estimates, recall 
\begin{equation*}
\partial_r A_0 = \frac1r A_\theta \rho, \quad \partial_r A_\theta = -
\frac12 r \rho
\end{equation*}
and 
\begin{equation}  \label{Adef}
A_0(t, r) := -
\int_r^\infty \frac{A_\theta(s)}{s} \rho(s) ds, \quad
A_\theta(t, r) := - \frac12 \int_0^r \rho(s) s ds 
\end{equation}
When it is important to indicate the dependence upon the density function $%
\rho$, we write $A_\theta^{(\rho)}(t, r)$ for $A_\theta(t, r)$. 
Recall
\begin{equation}  \label{A0elab}
A_0^{(\rho_1, \rho_2)}(t, r) = - \int_r^\infty A_\theta^{(\rho_1)}(s)
\rho_2(s) \frac{ds}{s}
\end{equation}
where $A_\theta^{(\rho_1)}$ is defined using \eqref{Adef}, but with $\rho_1$ in place of $\rho$
in the right-hand side, i.e.,
\[
A_\theta^{(\rho_1)}(t, r) = - \frac12 \int_0^r \rho_1(s) s ds
\]

Define the operators $[\partial_r]^{-1}$, $[r^{-n} \bar{\partial}_r]^{-1}$,
and $[r \partial_r]^{-1}$ acting on radial functions by 
\begin{equation*}
\begin{split}
[\partial_r]^{-1} f(r) &= - \int_r^\infty f(s) ds, \quad \quad [r^{-n} \bar{%
\partial}_r]^{-1} f(r) = \int_0^r f(s) s^n ds \\
[r \partial_r]^{-1} f(r) &= - \int_r^\infty \frac1s f(s) ds
\end{split}%
\end{equation*}
Then it follows by a direct argument that 
\begin{align}
\| [r \partial_r]^{-1} f \|_{L^p} &\lesssim_p \| f \|_{L^p}, \quad \quad 1
\leq p < \infty  \label{direct1} \\
\| r^{-n-1} [r^{-n} \bar{\partial}_r]^{-1} f \|_{L^p} &\lesssim_p \| f
\|_{L^p}, \quad \quad 1 < p \leq \infty  \label{direct2} \\
\| [\partial_r]^{-1} f \|_{L^2} &\lesssim \| f \|_{L^1}  \label{direct3}
\end{align}
These estimates appear, for instance, in \cite[(1.5)]{BeIoKeTa}, and also find application
in \cite[\S 2]{LiSm13}.
\begin{rem}
In these estimates and those below, we use the Lebesgue measure
on $\R^2$ for all $L^p$ spaces. In particular, for radial functions of $r$,
we essentially adopt the $rdr$ measure.
\end{rem}

\begin{lem}[Elementary bounds for $A$]
The connections coefficients $A_\theta$ and $A_0$, given by \eqref{Adef},
satisfy 
\begin{equation}  \label{Athet}
\| A_\theta \|_{L^\infty_x} \lesssim \| \rho \|_{L^1_x}, \quad \| \frac1r
A_\theta \|_{L^\infty_x} \lesssim \| \rho \|_{L^2_{x}}, \quad \| \frac{%
A_\theta}{r^2} \|_{L^p_x} \lesssim \| \rho \|_{L^p_x} \quad \text{where}
\quad 1 < p \leq \infty
\end{equation}
and 
\begin{equation}  \label{A0}
\| A_0 \|_{L^p_x} \lesssim \| \rho \|_{L^1_x} \| \rho \|_{L^p_x} \quad \text{%
where} \quad 1 \leq p < \infty, \quad \| A_0 \|_{L^\infty_x} \lesssim \|
\rho \|_{L^2_x}^2
\end{equation}
Moreover, $A_\theta^2$ satisfies the bounds 
\begin{equation}  \label{Athet2}
\| \frac{1}{r^2} A_\theta^2 \|_{L^p_x} \lesssim \| \rho \|_{L^1_x} \| \rho
\|_{L^p_x} \quad \text{where} \quad 1 < p \leq \infty, \quad \| \frac{1}{r^2}
A_\theta^2 \|_{L^\infty_x} \lesssim \| \rho \|_{L^2_x}^2
\end{equation}
\end{lem}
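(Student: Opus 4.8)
The plan is to derive all of the claimed bounds from the one-dimensional operator estimates \eqref{direct1} and \eqref{direct2}, together with Cauchy--Schwarz and H\"older's inequality, all taken in the $r\,dr$ measure, and to proceed in the order: first the bounds for $A_\theta$ in \eqref{Athet}, then those for $A_0$ in \eqref{A0}, and finally those for $A_\theta^2$ in \eqref{Athet2}, since the later bounds bootstrap off the earlier ones. I would begin by recording the operator identities $A_\theta = -\tfrac12 [r^{-1}\bar{\partial}_r]^{-1}\rho$ and $A_0 = [r\partial_r]^{-1}(A_\theta\rho)$, which are immediate from the definitions in \eqref{Adef}.

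For \eqref{Athet}: the $L^\infty_x$ bound on $A_\theta$ is simply $|A_\theta(r)| \le \tfrac12\int_0^\infty |\rho(s)|\,s\,ds \sim \|\rho\|_{L^1_x}$; the bound on $r^{-1}A_\theta$ follows from Cauchy--Schwarz in the $s\,ds$ measure on $[0,r]$, which produces the factor $(\int_0^r s\,ds)^{1/2} = r/\sqrt{2}$ compensating the $1/r$; and the bound on $r^{-2}A_\theta$ in $L^p_x$, $1<p\le\infty$, is exactly \eqref{direct2} with $n=1$, since $r^{-2}A_\theta = -\tfrac12\,r^{-2}[r^{-1}\bar{\partial}_r]^{-1}\rho$.

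For \eqref{A0}: the $L^p_x$ bound with $1\le p<\infty$ follows by applying \eqref{direct1} to $A_0 = [r\partial_r]^{-1}(A_\theta\rho)$ and then H\"older, namely $\|A_\theta\rho\|_{L^p_x}\le\|A_\theta\|_{L^\infty_x}\|\rho\|_{L^p_x}\lesssim\|\rho\|_{L^1_x}\|\rho\|_{L^p_x}$. Since \eqref{direct1} is not available at $p=\infty$, for the $L^\infty_x$ bound I would estimate $A_0$ pointwise: writing the integrand as $\frac{|A_\theta(s)|}{s}|\rho(s)| = \frac{|A_\theta(s)|}{s^2}\,|\rho(s)|\,s$ and applying Cauchy--Schwarz in the $s\,ds$ measure on $(r,\infty)$ gives $|A_0(r)|\lesssim\|r^{-2}A_\theta\|_{L^2_x}\|\rho\|_{L^2_x}\lesssim\|\rho\|_{L^2_x}^2$, the last step being the $p=2$ case already established. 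Finally, for \eqref{Athet2} I would use $r^{-2}A_\theta^2 = A_\theta\cdot(r^{-2}A_\theta)$ with H\"older and \eqref{Athet} on the range $1<p\le\infty$, and $r^{-2}A_\theta^2 = (r^{-1}A_\theta)^2$ for the $L^\infty_x$ bound.

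These steps are all elementary; the only places calling for a little care are keeping the $r\,dr$ and $dr$ measures straight --- the $L^p$ norms are always those on $\R^2$ --- and choosing the right Cauchy--Schwarz split in the $L^\infty_x$ bound for $A_0$, since a careless split produces a logarithmically divergent $\int_r^\infty ds/s$. I do not expect any genuine obstacle.
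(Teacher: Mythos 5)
Your proof is correct and follows essentially the same route as the paper: trivial bound and Cauchy--Schwarz for the first two $A_\theta$ bounds, \eqref{direct2} with $n=1$ for the third, \eqref{direct1} plus H\"older for the $L^p$ bound on $A_0$, Cauchy--Schwarz combined with the $p=2$ case of the third inequality of \eqref{Athet} for the $L^\infty$ bound on $A_0$, and the obvious factorizations of $r^{-2}A_\theta^2$ for \eqref{Athet2}. The only difference is that you spell out the Cauchy--Schwarz split for $\|A_0\|_{L^\infty_x}$ explicitly, which the paper leaves implicit.
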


\begin{proof}
These estimates are essentially contained in \cite[\S 2]{LiSm13}.

The first inequality of \eqref{Athet} is trivial. The second follows from
Cauchy-Schwarz: 
\begin{equation*}
| A_\theta(t, r) | \lesssim r \left( \int_0^\infty | \rho(s) |^2 s ds
\right)^{1/2}
\end{equation*}
The third is an application of \eqref{direct2} with $n = 1$.

The first inequality of \eqref{A0} follows from the first inequality of %
\eqref{Athet} and from \eqref{direct1}. The second is a consequence of
Cauchy-Schwarz and the third inequality of \eqref{Athet} with $p = 2$.

The first inequality of \eqref{Athet2} follows from the first and third
inequalities of \eqref{Athet}. The second follows from two applications of
the second inequality of \eqref{Athet}.
\end{proof}

\begin{lem}[Weighted estimates]
Let $\frac{1}{p}+\frac{1}{q}=1$ with $1<q<\infty $ and suppose that $\rho
=|\psi |^{2}$ and $\rho _{j}=|\psi _{j}|^{2}$ for $j=1,2$. Then 
\begin{align}
\Vert r^{-2/q}A_{\theta }^{(\rho )}\Vert _{L_{x}^{\infty }}& \lesssim \Vert
\psi \Vert _{\dot{H}_{x}^{1/q}}^{2}  \label{AthetaL2} \\
\Vert r^{-1/q}A_{\theta }^{(\rho )}\Vert _{L_{x}^{\infty }}& \lesssim \Vert
\psi \Vert _{\dot{H}_{x}^{1/q}}\Vert \psi \Vert _{L_{x}^{2}}
\label{AthetaPower}
\end{align}%
and 
\begin{equation}
\Vert r^{1/p}A_{0}^{(\rho _{1},\rho _{2})}\Vert _{L_{x}^{\infty }}\lesssim
\min_{\tau \in S_{2}}\Vert \psi _{\tau (1)}\Vert _{\dot{H}%
_{x}^{1/q}}^{2}\Vert \psi _{\tau (2)}\Vert _{\dot{H}_{x}^{1/p}}\Vert \psi
_{\tau (2)}\Vert _{L_{x}^{2}}  \label{A0Linfinity}
\end{equation}%
where $S_{2}$ denotes the set of permutations on two elements.
\end{lem}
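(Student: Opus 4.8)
The plan is to prove all three inequalities by one mechanism. For a radial function, $\int_0^\rho f(s)\,s\,ds$ and $\int_\rho^\infty f(s)\,s\,ds$ are (up to the factor $2\pi$) the integrals of $f$ over $\{|x|<\rho\}$ and its complement in $\R^2$, so after applying H\"older's inequality in the radial variable the domain of integration supplies a power of $\rho$; we choose the Lebesgue exponents so that this power matches the prescribed weight once we invoke the two‑dimensional Sobolev embedding $\dot H^{\sigma}_x\hookrightarrow L^{2/(1-\sigma)}_x$, $0<\sigma<1$. Since $1<q<\infty$ forces $1<p<\infty$, the exponents $2p=2/(1-1/q)$ and $2q=2/(1-1/p)$ appearing below are admissible, with $\dot H^{1/q}_x\hookrightarrow L^{2p}_x$ and $\dot H^{1/p}_x\hookrightarrow L^{2q}_x$.

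For \eqref{AthetaL2} and \eqref{AthetaPower} the point is that $|A_\theta^{(\rho)}(r)|=\tfrac12\int_0^r|\psi(s)|^2 s\,ds=\tfrac1{4\pi}\Vert\psi\Vert_{L^2(\{|x|<r\})}^2$, so it suffices to estimate $\int_{\{|x|<r\}}|\psi|^2\,dx$. For \eqref{AthetaL2} apply H\"older with the pair $(p,p')$: $\int_{\{|x|<r\}}|\psi|^2\lesssim\Vert\psi\Vert_{L^{2p}(\{|x|<r\})}^2\,r^{2/q}$, since $|\{|x|<r\}|^{1/p'}\sim r^{2(1-1/p)}=r^{2/q}$, and then Sobolev gives $\lesssim r^{2/q}\Vert\psi\Vert_{\dot H^{1/q}_x}^2$. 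For \eqref{AthetaPower} split $|\psi|^2=|\psi|\cdot|\psi|$ and use three H\"older factors of exponents $(2p,2,\,\cdot\,)$, keeping one copy of $\psi$ in $L^2$; the volume factor is now $r^{2(1/2-1/(2p))}=r^{1/q}$, and Sobolev on the remaining $L^{2p}$ factor yields $\lesssim r^{1/q}\Vert\psi\Vert_{\dot H^{1/q}_x}\Vert\psi\Vert_{L^2_x}$. We record for later that running the same argument with the pair $(2q,2)$ in place of $(2p,2)$ gives the alternative bound $\int_0^s|\psi_1|^2 u\,du\lesssim s^{1/p}\Vert\psi_1\Vert_{\dot H^{1/p}_x}\Vert\psi_1\Vert_{L^2_x}$.

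For \eqref{A0Linfinity} start from $A_0^{(\rho_1,\rho_2)}(r)=\tfrac12\int_r^\infty\bigl(\int_0^s|\psi_1(u)|^2u\,du\bigr)|\psi_2(s)|^2\,\tfrac{ds}{s}$. For the $\tau=\mathrm{id}$ term insert the bound $\int_0^s|\psi_1|^2u\,du\lesssim s^{2/q}\Vert\psi_1\Vert_{\dot H^{1/q}_x}^2$ from \eqref{AthetaL2}, reducing matters to
\begin{equation*}
r^{1/p}\int_r^\infty s^{2/q-1}|\psi_2(s)|^2\,ds=r^{1/p}\int_r^\infty s^{2/q-2}|\psi_2(s)|^2\,s\,ds\lesssim\Vert\psi_2\Vert_{L^{2q}_x}\Vert\psi_2\Vert_{L^2_x}\lesssim\Vert\psi_2\Vert_{\dot H^{1/p}_x}\Vert\psi_2\Vert_{L^2_x},
\end{equation*}
which follows from H\"older on $[r,\infty)$ with exponents $(2q,2,2p)$ applied to $|\psi_2|,|\psi_2|$ and the weight $s^{2/q-2}$: the weight lies in $L^{2p}(s\,ds)$ on $[r,\infty)$ — the only place convergence at infinity must be checked, and it holds precisely because $q>1$ — and it contributes the power $r^{-1/p}$ that cancels the prefactor. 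For the $\tau=(1\,2)$ term insert instead the alternative inner bound $\int_0^s|\psi_1|^2u\,du\lesssim s^{1/p}\Vert\psi_1\Vert_{\dot H^{1/p}_x}\Vert\psi_1\Vert_{L^2_x}$ recorded above and estimate $r^{1/p}\int_r^\infty s^{1/p-1}|\psi_2|^2\,ds\lesssim\Vert\psi_2\Vert_{L^{2p}_x}^2\lesssim\Vert\psi_2\Vert_{\dot H^{1/q}_x}^2$ by H\"older with exponents $(2p,2p,q)$, the weight $s^{1/p-2}$ now lying in $L^q(s\,ds)$ on $[r,\infty)$ (again finite since $q>1$). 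Taking the smaller of the two bounds gives \eqref{A0Linfinity}.

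The one genuinely non‑routine point is the minimum over $S_2$ in \eqref{A0Linfinity}: the expression $A_0^{(\rho_1,\rho_2)}$ is not symmetric in $\rho_1,\rho_2$ — the arguments sit in the inner (more‑integrated) and outer (less‑integrated) slots respectively — so the $\tau=(1\,2)$ estimate is not obtained by relabeling but by distributing the single iterated integral differently among H\"older factors; equivalently one could use Fubini/integration by parts to move $\rho_2$ into the inner slot, but the direct route above is cleaner. Everything else is bookkeeping: checking that the powers of $r$ balance and that the singular radial weights $s^{2/q-2}$, $s^{1/p-2}$ are integrable at infinity against $s\,ds$, which in every case reduces to the hypothesis $1<q<\infty$.
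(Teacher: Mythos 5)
Your proof is correct and follows essentially the same structure as the paper's: bound $A_\theta$ by H\"older over the disk $\{|x|<r\}$ combined with the two-dimensional Sobolev embedding $\dot H^{\sigma}(\mathbb{R}^2)\hookrightarrow L^{2/(1-\sigma)}$, then control $A_0$ by feeding the $A_\theta$ bound into a H\"older estimate on the tail integral over $[r,\infty)$. The one genuine difference in route is that the paper invokes Hardy's inequality at two points --- to prove \eqref{AthetaPower} via $|A_\theta|\lesssim r^{1/q}\Vert r^{-1/q}\psi\Vert_{L^2}\Vert\psi\Vert_{L^2}$, and again to absorb the $\Vert r^{-1/p}\psi_2\Vert_{L^2}$ factor arising in \eqref{A0Linfinity} --- whereas you avoid Hardy altogether by placing one $\psi$ factor into $L^{2p}$ or $L^{2q}$ via Sobolev, and by letting the singular radial weight $s^{-\alpha}$ itself occupy a H\"older slot (in $L^{2p}(s\,ds)$ or $L^q(s\,ds)$) over $[r,\infty)$, where its finiteness reduces to $q>1$. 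Both routes are equally elementary; yours is arguably more self-contained since it needs only Strichartz/Sobolev inputs. You also usefully make explicit what the paper compresses into ``repeat the argument with the roles of $\psi_1$ and $\psi_2$ reversed'': because $A_0^{(\rho_1,\rho_2)}$ is not symmetric in its arguments (they occupy the inner and outer slots of an iterated integral), the $\tau=(1\,2)$ bound in \eqref{A0Linfinity} is not obtained by relabeling but by swapping $p\leftrightarrow q$ in the choice of exponents --- using the \eqref{AthetaPower}-type bound on $A_\theta^{(\rho_1)}$ rather than the \eqref{AthetaL2}-type one, then pairing it with a different H\"older on the tail --- which you carry out correctly.
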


\begin{proof}
To establish \eqref{AthetaPower}, use H\"older's inequality to obtain 
\begin{equation*}
| A_\theta | \lesssim r^{2/q} \| \psi \|_{L^{2p}}^2
\end{equation*}
and then use Sobolev embedding. The estimate \eqref{AthetaL2} follows from
H\"older's inequality, which yields 
\begin{equation*}
| A_\theta | \lesssim r^{1/q} \| r^{-1/q} \psi \|_{L^2_x} \| \psi \|_{L^2_x},
\end{equation*}
and Hardy's inequality.

To prove \eqref{A0Linfinity}, use H\"older to write 
\begin{equation*}
\left| A_0^{(\rho_1, \rho_2)} \right| \lesssim \| r^{-2/q}
A_\theta^{(\rho_1)} \|_{L^\infty_x} \| r^{-1/p} \psi_2 \|_{L^2_x} \| \psi_2
\|_{L^2_x} r^{-1/p}
\end{equation*}
Then, using \eqref{AthetaL2} and Hardy's inequality, we obtain 
\begin{equation*}
\| r^{1/p} A_0^{(\rho_1, \rho_2)} \|_{L^\infty_x} \lesssim \| \psi_1 \|_{%
\dot{H}^{1/q}_x}^2 \| \psi_2 \|_{\dot{H}^{1/p}_x} \| \psi_2 \|_{L^2_x}
\end{equation*}
Finally, we may repeat the argument with the roles of $\psi_1$ and $\psi_2$
reversed.
\end{proof}

\begin{lem}[Bounds for the nonlinear terms]
Suppose that $\rho_j = |\psi_j|^2$ for $j = 1, 2$. Then 
\begin{equation}  \label{Abound}
\| A_0^{(\rho_1, \rho_2)} \Theta \|_{L^2_x} + \| \frac{1}{r^2}
A_\theta^{(\rho_1)}A_\theta^{(\rho_2)} \Theta \|_{L^2_x} \lesssim \| \psi_1
\|_{\dot{H}^{\frac12}_x} \| \psi_2 \|_{\dot{H}^{\frac12}_x} \| \Theta \|_{%
\dot{H}^{\frac12}_x} \min_{\tau \in S_2} \| \psi_{\tau(1)} \|_{\dot{H}%
^{\frac12}_x} \| \psi_{\tau(2)} \|_{L^2_x}
\end{equation}
\end{lem}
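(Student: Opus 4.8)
The plan is to read off \eqref{Abound} directly from the weighted estimates of the preceding lemma, taken at the balanced exponents $p=q=2$, together with Hardy's inequality in the form $\|r^{-1/2}f\|_{L^2_x}\lesssim\|f\|_{\dot H^{1/2}_x}$ (valid in two dimensions, exactly as used in the proof of \eqref{AthetaPower}). Since the left-hand side of \eqref{Abound} is a sum, it suffices to estimate each of the two summands; and since the right-hand side carries a minimum over $S_2$, it is enough to bound each summand by \emph{both} the $\tau=\mathrm{id}$ expression and the $\tau=(1\,2)$ expression, whereupon the minimum follows at the cost of a factor $2$.

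For the $A_0$ term I would split the trivial weight as $1=r^{1/2}\cdot r^{-1/2}$ and apply H\"older's inequality to obtain $\|A_0^{(\rho_1,\rho_2)}\Theta\|_{L^2_x}\le\|r^{1/2}A_0^{(\rho_1,\rho_2)}\|_{L^\infty_x}\,\|r^{-1/2}\Theta\|_{L^2_x}$. The second factor is $\lesssim\|\Theta\|_{\dot H^{1/2}_x}$ by Hardy, and the first is exactly \eqref{A0Linfinity} with $p=q=2$, which gives $\|r^{1/2}A_0^{(\rho_1,\rho_2)}\|_{L^\infty_x}\lesssim\min_{\tau\in S_2}\|\psi_{\tau(1)}\|^2_{\dot H^{1/2}_x}\|\psi_{\tau(2)}\|_{\dot H^{1/2}_x}\|\psi_{\tau(2)}\|_{L^2_x}$. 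Factoring $\|\psi_1\|_{\dot H^{1/2}_x}\|\psi_2\|_{\dot H^{1/2}_x}$ out of the latter minimum turns it into precisely the minimum in \eqref{Abound}, so the $A_0$ term is controlled.

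For the $A_\theta^2$ term I would split $r^{-2}=r^{-1}\cdot r^{-1/2}\cdot r^{-1/2}$ and apply H\"older with exponents $(\infty,\infty,2)$, so that $\|r^{-2}A_\theta^{(\rho_1)}A_\theta^{(\rho_2)}\Theta\|_{L^2_x}\le\|r^{-1}A_\theta^{(\rho_1)}\|_{L^\infty_x}\|r^{-1/2}A_\theta^{(\rho_2)}\|_{L^\infty_x}\|r^{-1/2}\Theta\|_{L^2_x}$. By \eqref{AthetaL2} with $q=2$ the first factor is $\lesssim\|\psi_1\|^2_{\dot H^{1/2}_x}$, by \eqref{AthetaPower} with $q=2$ the second is $\lesssim\|\psi_2\|_{\dot H^{1/2}_x}\|\psi_2\|_{L^2_x}$, and by Hardy the third is $\lesssim\|\Theta\|_{\dot H^{1/2}_x}$; multiplying gives the $\tau=\mathrm{id}$ term of \eqref{Abound}. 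Swapping the roles of $\rho_1$ and $\rho_2$ (equivalently, splitting $r^{-2}=r^{-1/2}\cdot r^{-1}\cdot r^{-1/2}$) gives the $\tau=(1\,2)$ term. Taking the minimum and adding the $A_0$ bound yields \eqref{Abound}.

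There is no real obstacle here: once the weighted estimates \eqref{AthetaL2}--\eqref{A0Linfinity} are available, \eqref{Abound} is pure bookkeeping. The only points requiring attention are (i) choosing the H\"older splittings so that exactly one factor is left in a weighted $L^2_x$ space to which Hardy applies, with the remaining weight exponents matching those in \eqref{AthetaL2}--\eqref{A0Linfinity}, and (ii) verifying that the two symmetric choices of splitting recover the two members of the minimum, so that no loss is incurred. The hypothesis $\rho_j=|\psi_j|^2$ serves, as usual, only to give meaning to the square-root functions $\psi_j$ on the right-hand side; the quadratic dependence on each $\psi_j$ is already built into \eqref{AthetaL2}--\eqref{A0Linfinity}.
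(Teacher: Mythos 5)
Your proposal is correct and reproduces the paper's argument essentially verbatim: the same H\"older splittings ($1=r^{1/2}\cdot r^{-1/2}$ for the $A_0$ term, $r^{-2}=r^{-1}\cdot r^{-1/2}\cdot r^{-1/2}$ for the $A_\theta^2$ term), the same invocations of \eqref{A0Linfinity}, \eqref{AthetaL2}, \eqref{AthetaPower} at $p=q=2$, the same use of Hardy's inequality on the $\Theta$ factor, and the same symmetrization in $\psi_1,\psi_2$ to produce the minimum. Nothing to add.
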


\begin{proof}
We start with 
\begin{equation*}
\| A_0^{(\rho_1, \rho_2)} \Theta \|_{L^2_x} \lesssim \| r^{1/2}
A_0^{(\rho_1, \rho_2)} \|_{L^\infty_x} \| r^{-1/2} \Theta \|_{L^2_x}
\lesssim \| r^{1/2} A_0^{(\rho_1, \rho_2)} \|_{L^\infty_x} \| \Theta \|_{%
\dot{H}^{1/2}_x}
\end{equation*}
and then appeal to \eqref{A0Linfinity} with $p = q = 2$.

Similarly, 
\begin{equation*}
\begin{split}
\| \frac{1}{r^2} A_\theta^{(\rho_1)} A_\theta^{(\rho_2)} \Theta \|_{L^2_x}
&\lesssim \| r^{-1} A_\theta^{(\rho_1)} \|_{L^\infty_x} \| r^{-1/2}
A_\theta^{(\rho_2)} \|_{L^\infty_x} \| r^{-1/2} \Theta \|_{L^2_x} \\
&\lesssim \| \psi_1 \|_{\dot{H}^{1/2}_x}^2 
\| \psi_2 \|_{\dot{H}^{1/2}_x} \| \psi_2 \|_{L^2_x} \| \Theta
\|_{\dot{H}^{1/2}_x}
\end{split}%
\end{equation*}
where we have used \eqref{AthetaPower} and \eqref{AthetaL2} with $p = q = 2$
and Hardy's inequality. Finally, we may repeat the estimate but with the
roles of $\psi_1$ and $\psi_2$ reversed.
\end{proof}

Now we introduce (see \eqref{a-def} to compare) 
\begin{equation}  \label{abilindef}
a_{\rho_1, \rho_2}(t, r) := A_0^{(\rho_1, \rho_2)}(t, r) + \frac{1}{r^2}
A_\theta^{(\rho_1)}(t, r) A_\theta^{(\rho_2)}(t, r)
\end{equation}
For the definitions of the terms on the right-hand side, see the equations
and comments from \eqref{Adef} to \eqref{A0elab}.

\begin{cor}
\label{Corollary:Key2}Suppose $\rho _{j}=|\psi _{j}|^{2}$ for $j=1,2$. Then 
\begin{equation}  \label{mainnest}
\begin{split}
\Vert a_{\rho _{1},\rho _{2}}\psi _{3}\Vert _{L_{x}^{2}}\lesssim \| \psi_1
\|_{\dot{H}_x^{\frac12}} \| \psi_2 \|_{\dot{H}_x^{\frac12}} \min_{\tau \in
S_3} \| \psi_{\tau(1)} \|_{\dot{H}^{\frac12}_x} \| \psi_{\tau(2)} \|_{\dot{H}%
^{\frac12}_x} \| \psi_{\tau(3)} \|_{L^2_x}
\end{split}%
\end{equation}
where $S_3$ denotes the set of permutations on three elements.
\end{cor}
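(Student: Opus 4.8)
The plan is to decompose $a_{\rho_1,\rho_2}=A_0^{(\rho_1,\rho_2)}+\frac{1}{r^2}A_\theta^{(\rho_1)}A_\theta^{(\rho_2)}$ as in \eqref{abilindef}, to bound $\|a_{\rho_1,\rho_2}\psi_3\|_{L^2_x}$ in two complementary ways, and then to take the minimum. The first bound comes directly from the previous lemma: applying \eqref{Abound} with $\Theta=\psi_3$ and using the triangle inequality gives
\[
\|a_{\rho_1,\rho_2}\psi_3\|_{L^2_x}\lesssim\|\psi_1\|_{\dot{H}^{\frac12}_x}\|\psi_2\|_{\dot{H}^{\frac12}_x}\|\psi_3\|_{\dot{H}^{\frac12}_x}\min_{\tau\in S_2}\|\psi_{\tau(1)}\|_{\dot{H}^{\frac12}_x}\|\psi_{\tau(2)}\|_{L^2_x}.
\]
Since $S_2$ permutes $\{1,2\}$, the right-hand side is precisely the value of $\|\psi_1\|_{\dot{H}^{\frac12}_x}\|\psi_2\|_{\dot{H}^{\frac12}_x}\min_{\tau\in S_3}\|\psi_{\tau(1)}\|_{\dot{H}^{\frac12}_x}\|\psi_{\tau(2)}\|_{\dot{H}^{\frac12}_x}\|\psi_{\tau(3)}\|_{L^2_x}$ restricted to those $\tau$ for which $\psi_1$ or $\psi_2$ carries the $L^2_x$ norm.

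It then remains to handle the case in which $\psi_3$ carries the $L^2_x$ norm; this does not follow from \eqref{Abound}, since $\psi_3$ and $\psi_1,\psi_2$ play genuinely asymmetric roles in $a_{\rho_1,\rho_2}\psi_3$. Here I would estimate $\|a_{\rho_1,\rho_2}\psi_3\|_{L^2_x}\le\|a_{\rho_1,\rho_2}\|_{L^\infty_x}\|\psi_3\|_{L^2_x}$ and establish the uniform bound $\|a_{\rho_1,\rho_2}\|_{L^\infty_x}\lesssim\|\psi_1\|_{\dot{H}^{\frac12}_x}^2\|\psi_2\|_{\dot{H}^{\frac12}_x}^2$. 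For the product term this is immediate from $\|r^{-2}A_\theta^{(\rho_1)}A_\theta^{(\rho_2)}\|_{L^\infty_x}\le\|r^{-1}A_\theta^{(\rho_1)}\|_{L^\infty_x}\|r^{-1}A_\theta^{(\rho_2)}\|_{L^\infty_x}$ together with \eqref{AthetaL2} at $q=2$. For $A_0^{(\rho_1,\rho_2)}$ I would insert the explicit formula $A_0^{(\rho_1,\rho_2)}(t,r)=-\int_r^\infty A_\theta^{(\rho_1)}(s)\,\rho_2(s)\,\frac{ds}{s}$ from \eqref{A0elab} and bound pointwise by $\|r^{-1}A_\theta^{(\rho_1)}\|_{L^\infty_x}\int_0^\infty|\psi_2(s)|^2\,ds$; the first factor is again \eqref{AthetaL2} at $q=2$, while $\int_0^\infty|\psi_2(s)|^2\,ds$ equals a constant multiple of $\|r^{-1/2}\psi_2\|_{L^2_x}^2$, which Hardy's inequality dominates by $\|\psi_2\|_{\dot{H}^{\frac12}_x}^2$. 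This yields $\|a_{\rho_1,\rho_2}\psi_3\|_{L^2_x}\lesssim\|\psi_1\|_{\dot{H}^{\frac12}_x}^2\|\psi_2\|_{\dot{H}^{\frac12}_x}^2\|\psi_3\|_{L^2_x}$, which is exactly the remaining term of the $S_3$ minimum (with the common prefactor $\|\psi_1\|_{\dot{H}^{\frac12}_x}\|\psi_2\|_{\dot{H}^{\frac12}_x}$ factored out).

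Taking the minimum of the two bounds reconstructs \eqref{mainnest}. There is no substantial obstacle: the argument is essentially bookkeeping built on the estimates \eqref{Abound}, \eqref{AthetaL2}, and Hardy's inequality already in hand. The only point requiring attention is to notice that \eqref{Abound} is by itself insufficient — it only allows $\psi_1$ or $\psi_2$ to be placed in $L^2_x$ — so one must supplement it with the uniform $L^\infty_x$ bound on $a_{\rho_1,\rho_2}$, and then check that the scaling $\|\psi_1\|_{\dot{H}^{\frac12}_x}^2\|\psi_2\|_{\dot{H}^{\frac12}_x}^2$ of that bound matches the missing term of the minimum.
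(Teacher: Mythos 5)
Your argument is correct and essentially identical to the paper's: both split the permutations into the four covered directly by \eqref{Abound} and the two where $\psi_3$ carries the $L^2_x$ norm, handling the latter via the uniform bound $\|a_{\rho_1,\rho_2}\|_{L^\infty_x}\lesssim\|\psi_1\|_{\dot{H}^{1/2}_x}^2\|\psi_2\|_{\dot{H}^{1/2}_x}^2$, obtained from \eqref{AthetaL2} (equivalently, from \eqref{Athet} plus Sobolev) together with Hardy for the $A_0$ piece. The only difference is cosmetic bookkeeping in which lemma is cited for the $r^{-1}A_\theta$ bound.
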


\begin{proof}
For all but two permutations the estimate follows from \eqref{Abound}. To
establish the estimate for the remaining two cases, we need $L^\infty_x$
bounds on $A_0^{(\rho_1, \rho_2)}$ and $\frac{1}{r^2} A_\theta^{(\rho_1)}
A_\theta^{(\rho_2)}$. Using the second estimate of \eqref{Athet} twice and
Sobolev embedding, we obtain 
\begin{equation*}
\| \frac{1}{r^2} A_\theta^{(\rho_1)} A_\theta^{(\rho_2)} \|_{L^\infty_x}
\leq \| \frac1r A_\theta^{(\rho_1)} \|_{L^\infty_x} \| \frac1r
A_\theta^{(\rho_2)} \|_{L^\infty_x} \lesssim \| \psi_1 \|_{L^4_x}^2 \|
\psi_2 \|_{L^4_x}^2 \lesssim \| \psi_1 \|_{\dot{H}^{1/2}_x}^2 \| \psi_2 \|_{%
\dot{H}^{1/2}_x}^2
\end{equation*}
To bound $A_0^{(\rho_1, \rho_2)}$, we proceed similarly to as was done in
the second estimate of \eqref{A0} and \eqref{A0Linfinity}. In particular,
invoking \eqref{AthetaPower} with $q = 2$ and Hardy, we obtain 
\begin{equation*}
\| A_0^{(\rho_1, \rho_2)} \|_{L^\infty_x} = \| \int_r^\infty s^{-1}
A_\theta^{(\rho_1)} s^{-1} |\psi_2|^2 s ds \|_{L^\infty_x} \lesssim \|
r^{-1} A_\theta^{(\rho_1)} \|_{L^\infty_x} \| r^{-1/2} \psi_2
\|_{L^\infty_x}^2 \lesssim \| \psi_1 \|_{\dot{H}^{1/2}_x}^2 \| \psi_2 \|_{%
\dot{H}^{1/2}_x}^2
\end{equation*}
\end{proof}

\begin{rem}
From the proofs of these estimates we see that the limiting factor in
lowering the regularity of the unconditional uniqueness result lies in the
interaction part, which requires $s = 2/3$ rather than the $s = 1/2$
required for the nonlinear part. By using negative-regularity Sobolev
spaces, \cite{HoTaXi14} lowers the regularity required for the interaction
part. Such a procedure does not seem to work, at least directly, for the
problem at hand. This is because one would need to obtain the same negative
order Sobolev index in the right-hand side of \eqref{mainnest} for the
purpose of moving the term arising from controlling the nonlinear part back
over to the left-hand side (see the argument following the proof of Theorem %
\ref{Thm:estimate for IP}).
\end{rem}

\textbf{Acknowledgments.} The authors thank the referee for
a careful reading of the manuscript and for helpful suggestions for improving
the readability of the paper.

%%%%%%%%%%%%%%%%%%%%%%%%%%%%%%%%%%%%%%%
%%%%%%%%%%%%%%%%%%%%%%%%%%%%%%%%%%%%%%%

% \cite{ArCaIa91, AGT, AmNi08, AmNi11}
% \cite{Beckner, BCEP1, BCEP2, BCEP4, BeBoSa95, BeOlSc12, ByHuSe12}
% \cite{DiluteGasBook, ChLeSc11, TChenAndNpGP1, TChenAndNP, TChenAndNPSpace-Time, ChPaTz10, TCNPNT1, ChHaPaSe13, ChenDie, Chen2ndOrder, ChenAnisotropic, Chen3DDerivation, C-H3Dto2D, C-HFocusing, C-H2/3, ChSp09, CrWe91}
% \cite{DeJaTe82, DuSc88}
% \cite{ErdosFock, E-E-S-Y1, E-Y1, E-S-Y1, E-S-Y2, E-S-Y5, E-S-Y3, EzHoIw91, EzHoIw91b}
% \cite{Frolich}
% \cite{Saint-Raymond, GrSoSt14, GM, GM1, GMM1, GMM2}
% \cite{HoTaXi14, HudsonMoody, Hu13}
% \cite{JaPi91b, JaPi91, JaPiWe90, JaTe81, JaWe90}
% \cite{King, KiScSt11, KlMa08, KnowlesAndPickl}
% \cite{Lanford, LeNaRo14, LiSeSoYn05, LiSm13, LiSmTa13}
% \cite{MaPaSo91, MichelangeliSchlein}
% \cite{PaKh86, Pickl}
% \cite{RodnianskiAndSchlein}
% \cite{SoSt13, Sohinger2, Sohinger3, Spohn, Stormer-69}
% \cite{Tao06}
% \cite{VeSc86, VeSc86b}
% \cite{Wi90}

%%%%%%%%%%%%%%%%%%%%%%%%%%%%%%%%%%%%%%%
%%%%%%%%%%%%%%%%%%%%%%%%%%%%%%%%%%%%%%%

%\bibliography{RadialCSS}

\begin{thebibliography}{99}
\bibitem{AGT} Riccardo Adami, Fran{\c{c}}ois Golse, and Alessandro Teta, 
\emph{Rigorous derivation of the cubic {NLS} in dimension one}, J. Stat.
Phys. \textbf{127} (2007), no.~6, 1193--1220. \MR{2331036 (2008i:82055)}

\bibitem{AmNi11} Z.~Ammari and F.~Nier, \emph{Mean field propagation of {W}%
igner measures and {BBGKY} hierarchies for general bosonic states}, J. Math.
Pures Appl. (9) \textbf{95} (2011), no.~6, 585--626. 
\MR{2802894
(2012e:81123)}

\bibitem{AmNi08} Zied Ammari and Francis Nier, \emph{Mean field limit for
bosons and infinite dimensional phase-space analysis}, Ann. Henri Poincar\'e 
\textbf{9} (2008), no.~8, 1503--1574. \MR{2465733 (2009m:81118)}

\bibitem{ArCaIa91} L.~Arkeryd, S.~Caprino, and N.~Ianiro, \emph{The
homogeneous {B}oltzmann hierarchy and statistical solutions to the
homogeneous {B}oltzmann equation}, J. Statist. Phys. \textbf{63} (1991),
no.~1-2, 345--361. \MR{1115588
  (92m:82108)}

\bibitem{Beckner} William Beckner, \emph{Multilinear embedding-convolution
estimates on smooth submanifolds}, Proc. Amer. Math. Soc. \textbf{142}
(2014), no.~4, 1217--1228. \MR{3162244}

\bibitem{BeIoKeTa} I. Bejenaru, A. Ionescu, C. E. Kenig, and D. Tataru,
\emph{Equivariant {S}chr\"odinger maps in two spatial dimensions},
Duke Math. J. \textbf{162}(11), 1967--2025 (2013).

\bibitem{BCEP1} D.~Benedetto, F.~Castella, R.~Esposito, and M.~Pulvirenti, 
\emph{Some considerations on the derivation of the nonlinear quantum {B}%
oltzmann equation. {II}. {T}he low density regime}, J. Stat. Phys. \textbf{%
124} (2006), no.~2-4, 951--996. \MR{2264631 (2008g:82106)}

\bibitem{BCEP4} \bysame, \emph{From the {$N$}-body {S}chr\"odinger equation
to the quantum {B}oltzmann equation: a term-by-term convergence result in
the weak coupling regime}, Comm. Math. Phys. \textbf{277} (2008), no.~1,
1--44. \MR{2357423
  (2008i:82089)}

\bibitem{BCEP2} D.~Benedetto, M.~Pulvirenti, F.~Castella, and R.~Esposito, 
\emph{On the weak-coupling limit for bosons and fermions}, Math. Models
Methods Appl. Sci. \textbf{15} (2005), no.~12, 1811--1843. 
\MR{2189914
(2007a:82035)}

\bibitem{BeOlSc12} N.~{Benedikter}, G.~{de Oliveira}, and B.~{Schlein}, 
\emph{Quantitative Derivation of the Gross-Pitaevskii Equation}, ArXiv
e-prints: 1208.0373 (2012).

\bibitem{BeBoSa95} L.~Berg{\'e}, A.~De~Bouard, and J.-C. Saut, \emph{Blowing
up time-dependent solutions of the planar, {C}hern-{S}imons gauged nonlinear 
{S}chr\"odinger equation}, Nonlinearity \textbf{8} (1995), no.~2, 235--253. 
\MR{1328596
  (96b:81025)}

\bibitem{ByHuSe12} Jaeyoung Byeon, Hyungjin Huh, and Jinmyoung Seok, \emph{%
Standing waves of nonlinear {S}chr\"odinger equations with the gauge field},
J. Funct. Anal. \textbf{263} (2012), no.~6, 1575--1608. \MR{2948224}

\bibitem{DiluteGasBook} Carlo Cercignani, Reinhard Illner, and Mario
Pulvirenti, \emph{The mathematical theory of dilute gases}, Applied
Mathematical Sciences, vol. 106, Springer-Verlag, New York, 1994. %
\MR{1307620 (96g:82046)}

\bibitem{ChLeSc11} Li~Chen, Ji~Oon Lee, and Benjamin Schlein, \emph{Rate of
convergence towards {H}artree dynamics}, J. Stat. Phys. \textbf{144} (2011),
no.~4, 872--903. \MR{2826623 (2012h:81078)}

\bibitem{ChSp09} Robin~Ming Chen and Daniel Spirn, \emph{Symmetric {C}hern-{S%
}imons-{H}iggs vortices}, Comm. Math. Phys. \textbf{285} (2009), no.~3,
1005--1031. \MR{2470914 (2010e:58021)}

\bibitem{ChHaPaSe13} T.~{Chen}, C.~{Hainzl}, N.~{Pavlovic}, and R.~{Seiringer%
}, \emph{Unconditional uniqueness for the cubic Gross-Pitaevskii hierarchy
via quantum de Finetti}, ArXiv e-prints: 1307.3168 (2013).

\bibitem{TCNPNT1} T.~Chen, N.~Pavlovi{\'c}, and N.~Tzirakis, \emph{%
Multilinear {M}orawetz identities for the {G}ross-{P}itaevskii hierarchy},
Recent advances in harmonic analysis and partial differential equations,
Contemp. Math., vol. 581, Amer. Math. Soc., Providence, RI, 2012,
pp.~39--62. \MR{3013052}

\bibitem{TChenAndNpGP1} Thomas Chen and Nata{\v{s}}a Pavlovi{\'c}, \emph{On
the {C}auchy problem for focusing and defocusing {G}ross-{P}itaevskii
hierarchies}, Discrete Contin. Dyn. Syst. \textbf{27} (2010), no.~2,
715--739. \MR{2600687 (2011f:35317)}

\bibitem{TChenAndNP} \bysame, \emph{The quintic {NLS} as the mean field
limit of a boson gas with three-body interactions}, J. Funct. Anal. \textbf{%
260} (2011), no.~4, 959--997. \MR{2747009 (2012b:82066)}

\bibitem{TChenAndNPSpace-Time} \bysame, \emph{Derivation of the cubic {NLS}
and {G}ross-{P}itaevskii hierarchy from manybody dynamics in {$d=3$} based
on spacetime norms}, Ann. Henri Poincar\'e \textbf{15} (2014), no.~3,
543--588. \MR{3165917}

\bibitem{ChPaTz10} Thomas Chen, Nata{\v{s}}a Pavlovi{\'c}, and Nikolaos
Tzirakis, \emph{Energy conservation and blowup of solutions for focusing {G}%
ross-{P}itaevskii hierarchies}, Ann. Inst. H. Poincar\'e Anal. Non
Lin\'eaire \textbf{27} (2010), no.~5, 1271--1290. \MR{2683760 (2011i:82034)}

\bibitem{C-HFocusing} X.~{Chen} and J.~{Holmer}, \emph{{Focusing Quantum
Many-body Dynamics: The Rigorous Derivation of the 1D Focusing Cubic
Nonlinear {S}chr\"odinger Equation}}, ArXiv e-prints: 1308.3895 (2013).

\bibitem{C-H2/3} \bysame, \emph{On the Klainerman-Machedon Conjecture of the
Quantum BBGKY Hierarchy with Self-interaction}, ArXiv e-prints: 1303.5385
(2013).

\bibitem{ChenDie} Xuwen Chen, \emph{Classical proofs of {K}ato type
smoothing estimates for the {S}chr\"odinger equation with quadratic
potential in {$\mathbb{R}^{n+1}$} with application}, Differential Integral
Equations \textbf{24} (2011), no.~3-4, 209--230. \MR{2757458 (2011m:35303)}

\bibitem{ChenAnisotropic} \bysame, \emph{Collapsing estimates and the
rigorous derivation of the 2d cubic nonlinear {S}chr\"odinger equation with
anisotropic switchable quadratic traps}, J. Math. Pures Appl. (9) \textbf{98}
(2012), no.~4, 450--478. \MR{2968164}

\bibitem{Chen2ndOrder} \bysame, \emph{Second order corrections to mean field
evolution for weakly interacting {B}osons in the case of three-body
interactions}, Arch. Ration. Mech. Anal. \textbf{203} (2012), no.~2,
455--497. \MR{2885567}

\bibitem{Chen3DDerivation} \bysame, \emph{On the rigorous derivation of the 3%
{D} cubic nonlinear {S}chr\"odinger equation with a quadratic trap}, Arch.
Ration. Mech. Anal. \textbf{210} (2013), no.~2, 365--408. \MR{3101788}

\bibitem{C-H3Dto2D} Xuwen Chen and Justin Holmer, \emph{On the rigorous
derivation of the 2{D} cubic nonlinear {S}chr\"odinger equation from 3{D}
quantum many-body dynamics}, Arch. Ration. Mech. Anal. \textbf{210} (2013),
no.~3, 909--954. \MR{3116008}

\bibitem{CrWe91} F.~M. Christ and M.~I. Weinstein, \emph{Dispersion of small
amplitude solutions of the generalized {K}orteweg-de {V}ries equation}, J.
Funct. Anal. \textbf{100} (1991), no.~1, 87--109. \MR{1124294 (92h:35203)}

\bibitem{VeSc86} H.~J. de~Vega and F.~A. Schaposnik, \emph{Electrically
charged vortices in nonabelian gauge theories with {C}hern-{S}imons term},
Phys. Rev. Lett. \textbf{56} (1986), no.~24, 2564--2566. 
\MR{845964
(87f:81099)}

\bibitem{VeSc86b} \bysame, \emph{Vortices and electrically charged vortices
in nonabelian gauge theories}, Phys. Rev. D (3) \textbf{34} (1986), no.~10,
3206--3213. \MR{867028 (88a:81139)}

\bibitem{DeJaTe82} S.~Deser, R.~Jackiw, and S.~Templeton, \emph{%
Topologically massive gauge theories}, Ann. Physics \textbf{140} (1982),
no.~2, 372--411. \MR{665601
  (84j:81128)}

\bibitem{DuSc88} Nelson Dunford and Jacob~T. Schwartz, \emph{Linear
operators. {P}art {I}}, Wiley Classics Library, John Wiley \& Sons, Inc.,
New York, 1988, General theory, With the assistance of William G. Bade and
Robert G. Bartle, Reprint of the 1958 original, A Wiley-Interscience
Publication. \MR{1009162
  (90g:47001a)}

\bibitem{E-E-S-Y1} Alexander Elgart, L{\'a}szl{\'o} Erd{\H{o}}s, Benjamin
Schlein, and Horng-Tzer Yau, \emph{Gross-{P}itaevskii equation as the mean
field limit of weakly coupled bosons}, Arch. Ration. Mech. Anal. \textbf{179}
(2006), no.~2, 265--283. \MR{2209131 (2007b:81310)}

\bibitem{ErdosFock} L{\'a}szl{\'o} Erd{\H{o}}s, Manfred Salmhofer, and
Horng-Tzer Yau, \emph{On the quantum {B}oltzmann equation}, J. Statist.
Phys. \textbf{116} (2004), no.~1-4, 367--380. \MR{2083147 (2006b:82086)}

\bibitem{E-S-Y1} L{\'a}szl{\'o} Erd{\H{o}}s, Benjamin Schlein, and
Horng-Tzer Yau, \emph{Derivation of the {G}ross-{P}itaevskii hierarchy for
the dynamics of {B}ose-{E}instein condensate}, Comm. Pure Appl. Math. 
\textbf{59} (2006), no.~12, 1659--1741. \MR{2257859 (2007k:82070)}

\bibitem{E-S-Y2} \bysame, \emph{Derivation of the cubic non-linear {S}%
chr\"odinger equation from quantum dynamics of many-body systems}, Invent.
Math. \textbf{167} (2007), no.~3, 515--614. \MR{2276262 (2007m:81258)}

\bibitem{E-S-Y5} \bysame, \emph{Rigorous derivation of the {G}ross-{P}%
itaevskii equation with a large interaction potential}, J. Amer. Math. Soc. 
\textbf{22} (2009), no.~4, 1099--1156. \MR{2525781 (2010g:82041)}

\bibitem{E-S-Y3} \bysame, \emph{Derivation of the {G}ross-{P}itaevskii
equation for the dynamics of {B}ose-{E}instein condensate}, Ann. of Math.
(2) \textbf{172} (2010), no.~1, 291--370. \MR{2680421 (2011g:82068)}

\bibitem{E-Y1} L{\'a}szl{\'o} Erd{\H{o}}s and Horng-Tzer Yau, \emph{%
Derivation of the nonlinear {S}chr\"odinger equation from a many body {C}%
oulomb system}, Adv. Theor. Math. Phys. \textbf{5} (2001), no.~6,
1169--1205. \MR{1926667
  (2004c:82075)}

\bibitem{EzHoIw91} Z.~F. Ezawa, M.~Hotta, and A.~Iwazaki, \emph{Breathing
vortex solitons in nonrelativistic {C}hern-{S}imons gauge theory}, Phys.
Rev. Lett. \textbf{67} (1991), no.~4, 411--414. \MR{1114940 (92f:81100)}

\bibitem{EzHoIw91b} \bysame, \emph{Nonrelativistic {C}hern-{S}imons vortex
solitons in external magnetic field}, Phys. Rev. \textbf{44} (1991), no.~D,
452--63.

\bibitem{Frolich} J{\"u}rg Fr{\"o}hlich, Antti Knowles, and Simon Schwarz, 
\emph{On the mean-field limit of bosons with {C}oulomb two-body interaction}%
, Comm. Math. Phys. \textbf{288} (2009), no.~3, 1023--1059. 
\MR{2504864
(2010b:82039)}

\bibitem{Saint-Raymond} Isabelle Gallagher, Laure Saint-Raymond, and
Benjamin Texier, \emph{From {N}ewton to {B}oltzmann: hard spheres and
short-range potentials}, Zurich Lectures in Advanced Mathematics, European
Mathematical Society (EMS), Z\"urich, 2013. \MR{3157048}

\bibitem{GrSoSt14} Philip Gressman, Vedran Sohinger, and Gigliola
Staffilani, \emph{On the uniqueness of solutions to the periodic 3{D} {G}%
ross--{P}itaevskii hierarchy}, J. Funct. Anal. \textbf{266} (2014), no.~7,
4705--4764. \MR{3170216}

\bibitem{GM1} M.~Grillakis and M.~Machedon, \emph{Pair excitations and the
mean field approximation of interacting bosons, {I}}, Comm. Math. Phys. 
\textbf{324} (2013), no.~2, 601--636. \MR{3117522}

\bibitem{GMM2} M.~Grillakis, M.~Machedon, and D.~Margetis, \emph{%
Second-order corrections to mean field evolution of weakly interacting
bosons. {II}}, Adv. Math. \textbf{228} (2011), no.~3, 1788--1815. %
\MR{2824569 (2012m:81215)}

\bibitem{GMM1} Manoussos~G. Grillakis, Matei Machedon, and Dionisios
Margetis, \emph{Second-order corrections to mean field evolution of weakly
interacting bosons. {I}}, Comm. Math. Phys. \textbf{294} (2010), no.~1,
273--301. \MR{2575484 (2011j:81374)}

\bibitem{GM} Manoussos~G. Grillakis and Dionisios Margetis, \emph{A priori
estimates for many-body {H}amiltonian evolution of interacting boson system}%
, J. Hyperbolic Differ. Equ. \textbf{5} (2008), no.~4, 857--883. 
\MR{2475483
(2010f:35321)}

\bibitem{HoTaXi14} Y.~{Hong}, K.~{Taliaferro}, and Z.~{Xie}, \emph{%
Unconditional Uniqueness of the cubic Gross-Pitaevskii Hierarchy with Low
Regularity}, ArXiv e-prints: 1402.5347 (2014).

\bibitem{HudsonMoody} R.~L. Hudson and G.~R. Moody, \emph{Locally normal
symmetric states and an analogue of de {F}inetti's theorem}, Z.
Wahrscheinlichkeitstheorie und Verw. Gebiete \textbf{33} (1975/76), no.~4,
343--351. \MR{0397421 (53 \#1280)}

\bibitem{Hu13} Hyungjin Huh, \emph{Energy solution to the {C}hern-{S}imons-{S%
}chr\"odinger equations}, Abstr. Appl. Anal. (2013), Art. ID 590653, 7. %
\MR{3035224}

\bibitem{JaPi90} R.~Jackiw and So-Young Pi, \emph{Classical and quantal nonrelativistic
{C}hern-{S}imons theory}, Phys. Rev. D \textbf{42} (1990), 3500--3513.

\bibitem{JaPi91b} \bysame, \emph{Time-dependent {C}hern-{S}%
imons solitons and their quantization}, Phys. Rev. D (3) \textbf{44} (1991),
no.~8, 2524--2532. \MR{1132645 (93d:81108)}

\bibitem{JaPi91} \bysame, \emph{Self-dual {C}hern-{S}imons solitons}, Progr.
Theoret. Phys. Suppl. (1992), no.~107, 1--40, Low-dimensional field theories
and condensed matter physics (Kyoto, 1991). \MR{1194691
(94d:81114)}

\bibitem{JaPiWe90} R.~Jackiw, So-Young Pi, and Erick~J. Weinberg, \emph{%
Topological and nontopological solitons in relativistic and nonrelativistic {%
C}hern-{S}imons theory}, Particles, strings and cosmology ({B}oston, {MA},
1990), World Sci. Publ., River Edge, NJ, 1991, pp.~573--588. \MR{1170608}

\bibitem{JaTe81} R.~Jackiw and S.~Templeton, \emph{How super-renormalizable
interactions cure their infrared divergences}, Phys. Rev. \textbf{23}
(1993), no.~B, 2291--304.

\bibitem{JaWe90} R.~Jackiw and Erick~J. Weinberg, \emph{Self-dual {C}hern-{S}%
imons vortices}, Phys. Rev. Lett. \textbf{64} (1990), no.~19, 2234--2237. 
\MR{1050530
  (91a:81117)}

\bibitem{King} Francis~Gordon King, \emph{B{BGKY} {HIERARCHY} {FOR} {POSITIVE%
} {POTENTIALS}}, ProQuest LLC, Ann Arbor, MI, 1975, Thesis
(Ph.D.)--University of California, Berkeley. \MR{2625983}

\bibitem{KiScSt11} Kay Kirkpatrick, Benjamin Schlein, and Gigliola
Staffilani, \emph{Derivation of the two-dimensional nonlinear {S}%
chr\"odinger equation from many body quantum dynamics}, Amer. J. Math. 
\textbf{133} (2011), no.~1, 91--130. \MR{2752936
  (2012f:81079)}

\bibitem{KlMa08} Sergiu Klainerman and Matei Machedon, \emph{On the
uniqueness of solutions to the {G}ross-{P}itaevskii hierarchy}, Comm. Math.
Phys. \textbf{279} (2008), no.~1, 169--185. \MR{2377632 (2009a:35236)}

\bibitem{KnowlesAndPickl} Antti Knowles and Peter Pickl, \emph{Mean-field
dynamics: singular potentials and rate of convergence}, Comm. Math. Phys. 
\textbf{298} (2010), no.~1, 101--138. \MR{2657816 (2011j:81096)}

\bibitem{Lanford} Oscar~E. Lanford, III, \emph{Time evolution of large
classical systems}, Dynamical systems, theory and applications ({R}econtres, 
{B}attelle {R}es. {I}nst., {S}eattle, {W}ash., 1974), Springer, Berlin,
1975, pp.~1--111. Lecture Notes in Phys., Vol. 38. \MR{0479206 (57
\#18653)}

\bibitem{LeNaRo14} Mathieu Lewin, Phan~Th{\`a}nh Nam, and Nicolas
Rougerie, \emph{Derivation of {H}artree's theory for generic mean-field {B}%
ose systems}, Adv. Math. \textbf{254} (2014), 570--621. \MR{3161107}

\bibitem{LiSeSoYn05} Elliott~H. Lieb, Robert Seiringer, Jan~Philip Solovej,
and Jakob Yngvason, \emph{The mathematics of the {B}ose gas and its
condensation}, Oberwolfach Seminars, vol.~34, Birkh\"auser Verlag, Basel,
2005. \MR{2143817
  (2006e:82001)}

\bibitem{LiSm13} B.~{Liu} and P.~{Smith}, \emph{{Global wellposedness of the
equivariant {C}hern-{S}imons-{S}chr\"odinger equation}}, ArXiv e-prints:
1312.5567 (2013).

\bibitem{LiSmTa13} B.~{Liu}, P.~{Smith}, and D.~{Tataru}, \emph{Local
wellposedness of {C}hern-{S}imons-{S}chr\"odinger}, Int. Math. Res. Not.
IMRN; doi: 10.1093/imrn/rnt161 (2013).

\bibitem{MaPaSo91} L.~Martina, O.~K. Pashaev, and G.~Soliani, \emph{%
Self-dual {C}hern-{S}imons solitons in nonlinear {$\sigma$}-models}, Modern
Phys. Lett. A \textbf{8} (1993), no.~34, 3241--3250. 
\MR{1249658
(94g:81138)}

\bibitem{MichelangeliSchlein} Alessandro Michelangeli and Benjamin Schlein, 
\emph{Dynamical collapse of boson stars}, Comm. Math. Phys. \textbf{311}
(2012), no.~3, 645--687. \MR{2909759}

\bibitem{PaKh86} Samir~K. Paul and Avinash Khare, \emph{Charged vortices in
an abelian {H}iggs model with {C}hern-{S}imons term}, Phys. Lett. B \textbf{%
174} (1986), no.~4, 420--422. \MR{855612 (87j:81269a)}

\bibitem{Pickl} Peter Pickl, \emph{A simple derivation of mean field limits
for quantum systems}, Lett. Math. Phys. \textbf{97} (2011), no.~2, 151--164. 
\MR{2821235
  (2012i:81299)}

\bibitem{RodnianskiAndSchlein} Igor Rodnianski and Benjamin Schlein, \emph{%
Quantum fluctuations and rate of convergence towards mean field dynamics},
Comm. Math. Phys. \textbf{291} (2009), no.~1, 31--61. 
\MR{2530155
(2011d:82059)}

\bibitem{Sohinger3} V.~{Sohinger}, \emph{{A rigorous derivation of the
defocusing cubic nonlinear {S}chr\"odinger equation on $\mathbf{T}^3$ from
the dynamics of many-body quantum systems}}, ArXiv e-prints: 1405.3003
(2014).

\bibitem{Sohinger2} \bysame, \emph{Local existence of solutions to
Randomized Gross-Pitaevskii hierarchies}, ArXiv e-prints: 1401.0326 (2014).

\bibitem{SoSt13} V.~{Sohinger} and G.~{Staffilani}, \emph{Randomization and
the Gross-Pitaevskii hierarchy}, ArXiv e-prints: 1308.3714 (2013).

\bibitem{Spohn} Herbert Spohn, \emph{Kinetic equations from {H}amiltonian
dynamics: {M}arkovian limits}, Rev. Modern Phys. \textbf{52} (1980), no.~3,
569--615. \MR{578142
  (81e:82010)}

\bibitem{Stormer-69} Erling St{\o }rmer, \emph{Symmetric states of infinite
tensor products of {$C^{\ast} $}-algebras}, J. Functional Analysis \textbf{3}
(1969), 48--68. \MR{0241992 (39 \#3327)}

\bibitem{Tao06} Terence Tao, \emph{Nonlinear dispersive equations}, CBMS
Regional Conference Series in Mathematics, vol. 106, Published for the
Conference Board of the Mathematical Sciences, Washington, DC, 2006, Local
and global analysis. \MR{2233925 (2008i:35211)}

\bibitem{Wi90} Frank Wilczek, \emph{Fractional statistics and anyon
superconductivity}, World Scientific Publishing Co. Inc., Teaneck, NJ, 1990. %
\MR{1081990 (92c:81001)}
\end{thebibliography}
%\bibliographystyle{amsplain}

%%%%%%%%%%%%%%%%%%%%%%%%%%%%%%%%%%%%%%%
%%%%%%%%%%%%%%%%%%%%%%%%%%%%%%%%%%%%%%%

\end{document}